\documentclass[a4paper,reqno]{amsart}

\usepackage[utf8]{inputenc}
\usepackage{graphicx}
\usepackage{amsmath}
\usepackage{amsfonts}
\usepackage{amssymb}
\usepackage{amsthm}
\usepackage{pgf,tikz}
\usetikzlibrary{calc, positioning}
\usetikzlibrary{arrows}
\usepackage{nicefrac}
\usepackage{cleveref}
\usepackage{mathtools}
\usepackage{microtype}
\usepackage{fullpage}
\usepackage[foot]{amsaddr}

\numberwithin{equation}{section}

\theoremstyle{definition}
\newtheorem{defi}{Definition}[section]

\theoremstyle{plain}
\newtheorem{theorem}[defi]{Theorem}
\newtheorem{proposition}[defi]{Proposition}
\newtheorem{lemma}[defi]{Lemma}
\newtheorem{corollary}[defi]{Corollary}

\theoremstyle{remark}

\usepackage{graphicx}
\usepackage{caption}
\usepackage[labelformat=simple]{subcaption}

\setlength{\parskip}{0.4\baselineskip plus 0.2\baselineskip minus 0.2\baselineskip}
\setlength\parindent{0pt}

\newcommand{\dx}{\, \mathrm{d}}
\renewcommand{\tilde}{\widetilde}
\renewcommand{\P}{\mathbb{P}}
\newcommand{\R}{\mathbb{R}}
\newcommand{\N}{\mathbb{N}}
\newcommand{\mbf}[1]{\mathbf{#1}}
\newcommand{\Sinf}{\Sigma^{\infty}}
\newcommand{\M}{\mathcal{M}}
\newcommand{\aq}[1]{\sim_{#1}}
\newcommand{\A}[2]{A_{#1}^{(#2)}}
\newcommand{\qur}[2]{#1/_{\aq #2}}
\newcommand{\bsr}{\boldsymbol{\rho}}

%% Image Gasket

\count5=0 % current iteration depth
\count6=1 % previous size
\count7=0 % current x
\count8=0 % current y

\def\tri
{
  \ifnum\count5=\topDepth
    \draw 
      (\the\numexpr\count7+2\relax,\the\count8) -| 
      (\the\count7,\the\numexpr\count8+2\relax) -- 
      cycle;
  \else
    \begingroup
      \advance\count5 by 1
      \multiply\count6 by 2
        \tri
      \begingroup
        \advance\count7 by\count6
        \tri
      \endgroup
      \begingroup
        \advance\count8 by\count6
        \tri
      \endgroup
    \endgroup
  \fi
}

\pgfmathsetmacro{\mySqrt}{sqrt(.75)}

\begin{document}
	
\title{The Sierpi\'nski gasket as the Martin boundary of a non-isotropic Markov chain}

\author[M.\ Kesseb\"ohmer]{M.\ Kesseb\"ohmer}
\address[M.\ Kesseb\"ohmer \& K.~Sender]{FB~3--Mathematik und Informatik, Universit\"at Bremen, 28359 Bremen, Germany}
\author[T.\ Samuel]{T.\ Samuel}
\address[T.\ Samuel]{\parbox[t]{0.875\textwidth}{Mathematics Department, California Polytechnic State University, San Luis Obispo, CA, USA and Institut Mittag-Leffler, Aurav\"agen 17, Djursholm, Sweden\\[-0.75em]}}
\author{K.~Sender}
	
\subjclass[2010]{31C35; 60J50; 28A80; 60J10.}

\keywords{Martin boundary; Markov chain; Green function; harmonic function; Sierpi{\'n}ski gasket.}

\maketitle
	
\begin{abstract}
In 2012 Lau and Ngai, motivated by the work of Denker and Sato, gave an example of an isotropic Markov chain on the set of finite words over a three letter alphabet, whose Martin boundary is homeomorphic to the Sierpi\'nski gasket.  Here, we extend the results of Lau and Ngai to a class of non-isotropic Markov chains.  We determine the Martin boundary and show that the minimal Martin boundary is a proper subset of the Martin boundary. In addition, we give a description of the set of harmonic functions. 
\end{abstract}

\section{Introduction}
	
The concept of Martin boundaries for Markov chains stems from the work of Martin, Doob and Hunt and has close ties to harmonic analysis and potential theory.  Indeed, in solving the Dirichlet problem for arbitrary domains in $ \R^n $, Martin introduced the notion of an ideal boundary \cite{MR0003919}.  Roughly 20 years later Doob \cite{MR0107098} and Hunt \cite{MR0123364} gave a probabilistic version for Markov chains, which is now known as a Martin boundary. This provides a motivation for constructing Markov chains with fractal Martin boundaries, as it offers a probabilistic approach to the study of analysis on fractals, which has recently attracted much attention -- see for example \cite{MR3042410,MR2746525,MR2912440,MR1076617,Kig,MR3203408,MR2095624,Str} and references therein.  We refer the reader to \cite{Dyn,KKS,MR1743100,MR2548569} for a general introduction to harmonic analysis and potential theory for Markov chains.
	
Denker and Sato \cite{MR1739300,MR1822915} created a Markov chain whose Martin boundary is homeomorphic to the Sierpi\'nski gasket (see Figure~\ref{fig:SG}), and used potential theory on the Martin boundary to induce a harmonic structure.  In \cite{DS2002} they identified a subclass of `strongly harmonic functions' on the Martin boundary which coincides with Kigami's canonical class of harmonic functions \cite{MR1076617,Kig,Str}.  Denker, Imai and Koch \cite{MR2283132} extended this construction to some \mbox{non-self-similar} Sierpi\'nski type gaskets and studied an associated Dirichlet form.  Further, there exists a family of metrics on the Martin boundary dependent on a family of scaling factors. In \cite{MR2180235} the Hausdorff, packing and information dimension of the Martin boundary with respect to this family of metrics was studied. The work of \cite{MR1739300,MR1822915} has been shown to encompass the pentagasket,~see~\cite{AI2002}.
	
The class of connected post critically finite \mbox{self-similar} sets, to which the Sierpi\'nski gasket belongs, has played a crucial role in the development of analysis on fractals, see for instance \cite{MR2746525,MR1076617,Kig,Str} and references therein. In \cite{MR2833578}, Ju, Lau and Wang built on the line of research initiated by Denker and Sato, by showing that, for a certain class of post critically finite \mbox{self-similar} sets, one may define a Markov chain whose Martin boundary is homeomorphic to the given set.  In all of the above considerations, the Markov chain is non-reversible and isotropic, where by isotropic we mean that the chain has equal probability to pass to the next state.
	
To our knowledge, the first representation of a connected post critically finite \mbox{self-similar} fractal set as the Martin boundary of an isotropic reversible Markov chain was given by Pearse \cite{2011arXiv11041650P}.  Lau and Wang showed in \cite{MR3649228} that for any contractive iterated function system, there is a naturally defined augmented tree, which is hyperbolic and whose hyperbolic boundary is H\"older equivalent to the \mbox{self-similar} set. Moreover, an iterated function system satisfies the open set condition if and only if this augmented tree has uniformly bounded degree. In \cite{KONG20171099}, Kong, Lau and Wong considered an isotropic reversible random walk on such an augmented tree and, using the results of Ancona \cite{A1988}, showed that the Gromov boundary, the Martin boundary, the minimal Martin boundary and the \mbox{self-similar} set are all homeomorphic. Further, under certain conditions, using an approach of Silverstein \cite{MR0350876}, they proved that the Martin kernel, which gives rise to the Martin metric and hence the Martin boundary, defines a non-local Dirichlet form. The work of Kong, Lau and Wong complements that of Series \cite{MR693661} who showed the following.  For a finitely generated non-elementary Fuchsian group $\Gamma$ without cusps, and a finitely supported probability measure $\mu$ on $\Gamma$, the Martin boundary of the random walk on $\Gamma$ with distribution $\mu$ is homeomorphic~to~the~limit~set~of~$\Gamma$.
	
In \cite{Lau}, Lau and Ngai defined an isotropic Markov chain on the set of finite words $ \Sigma^* \coloneqq \bigcup_{n \in \N_{0}} \Sigma^n $ over the alphabet $ \Sigma \coloneqq \{1,2,3\} $. They showed that the Martin boundary is homeomorphic to the Sierpi\'nski gasket, whereas unlike in the previous constructions, the minimal Martin boundary is a proper subset of the Martin boundary and coincides with the post critical set.  Additionally, they proved that the harmonic functions are precisely the canonical harmonic functions of Kigami.  This work has been extended to the Hata tree, a connected non-symmetric \mbox{self-similar} post critically finite fractal set, see \cite{Lau2}.  
	
Our contributions to this story and the purpose of this article is to extend the construction of  \cite{Lau} to the case when the Markov chain is non-isotropic.  Indeed, we consider a class of non-isotropic Markov chains dependent on a parameter $p \in (0, 1/2)$, and show that the Martin boundary and the minimal Martin boundary is independent of the choice of $p$.  We find this result interesting as the Martin boundary is defined via a metric, called the Martin metric, which is dependent on scaling factors and the parameter $ p $, see Section~\ref{sec_Martin_metric}.  Moreover, the theory of Ancona \cite{A1988} is not applicable in the setting of \cite{Lau}, and hence our setting, as the isoperimetric inequality is not satisfied.
	
The state space of our Markov chain will be the set of finite words $ \Sigma^*$. We regard each $ \Sigma^n $ as the set of vertices of a graph $ \Gamma^n $, which we will consider as level-$ n $ approximations of the Sierpi\'nski gasket, see Figures~\ref{fig:Graphs_Gamma_n} and~\ref{fig:P}. The Markov chain is defined as nearest neighbour random walk on each $ \Gamma^n $, except for three `boundary vertices'. When hitting one of these, the Markov chain moves to the next level, namely $ \Gamma^{n+1} $.  Our chain is constructed such that it stays with probability $ 2 p \in (0, 1) $ at the `outer part' of the graphs $ \Gamma^n $, and goes with probability $ q \coloneqq 1 - 2p $ to the `inner part' of $ \Gamma^n $.  We exclude $ p \in \{0, 1/2 \} $, since in this case the Martin metric is not a metric and hence, the Martin boundary is not well defined. We note that the Markov chain of \cite{Lau} occurs as a special case of our setting when $ p = 1/3$.   Our main results are Theorems \ref{thm_M_homeo_K}, \ref{thm_min_Martin_boundary} and \ref{thm:harmonic_functions}, where the key contribution to proving these results lies in Theorem~\ref{prop_limits}. 
	
	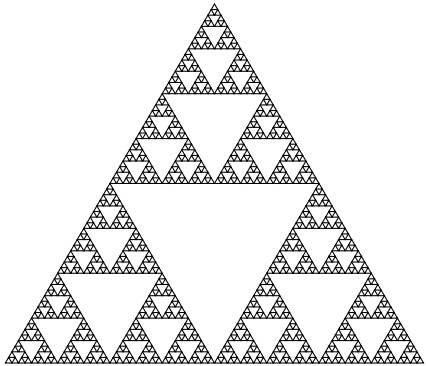
\begin{figure}
		\begin{center}
			\def\topDepth{6}
			
			\begin{tikzpicture}[scale=2.75*.5^\topDepth,cm={1,0,.5,\mySqrt,(0,0)}]
			\tri
			\end{tikzpicture}
			
			\caption{The Sierpi\'nski gasket.} \label{fig:SG}
		\end{center}
	\end{figure}
	
This article is structured as follows. In Section~\ref{basics} we give basic definitions and formally introduce the graphs $\Gamma_{n}$.  An important tool in identifying the Martin boundary with the Sierpi\'nski gasket will be what is referred to as the standard projection; this is matter of Section~\ref{Standard_projection}.  In Section~\ref{The_Markov_chain} we define our Markov chain $(X_{n})_{n \in \N}$ outlined above. Next, we give key hitting probabilities of $(X_{n})_{n \in \N}$ in Section~\ref{Random_matrix_product}. With this at hand, we may express the probability to move to the next level in the graph as a random matrix product. This is the main tool in \cite{Lau} and depends only on the underlying graph structure of the Markov chain. Here we observe that the framework of \cite{Lau} may be applied with some modifications. We investigate the limiting behaviour of the matrix product in Section~\ref{sec_limit_matrix_prod} and introduce the Martin metric in Section~\ref{sec_Martin_metric}.  Section~\ref{The_Green_function} is concerned with showing that the Green function and the Martin kernel can be extended to the set of infinite words over the alphabet $ \Sigma $.  In Section~\ref{The_Martin_metric_and_Martin_space}, we introduce the Martin boundary and describe how the homeomorphism of the Martin boundary and the Sierpi\'nski gasket is obtained. Section~\ref{sec_harm_fct} deals with determining the harmonic functions related to the Markov chain.  The non-trivial and challenging task of this work is to establish the limits of the sequences of hitting probabilities discussed in Section~\ref{Random_matrix_product}.  This is the focus of Section~\ref{sec_hitt_prob}.
	
	\section{Construction of the Markov chain} \label{Constr_Markov_chain}
	
	\subsection{Basic definitions}\label{basics}
	
	We write $\Sigma^n \coloneqq \{1,2,3\}^n $ for the \textsl{set of words of length $n \in \N_{0}$ over the alphabet $\Sigma \coloneqq \{1, 2, 3 \} $}, where following convention $\Sigma^0 \coloneqq \{ \vartheta \}$ is the set containing the \textsl{empty word} $\vartheta$. The \textsl{set of all finite words} is defined by $ \Sigma^* \coloneqq \bigcup_{n \in \N_0} \Sigma^n $ and the \textsl{set of all infinite words} by $\Sinf \coloneqq \{1,2,3\}^{\N}$. We let $\sigma \colon \Sinf \to \Sinf$ denote the \textsl{left shift map} which acts on infinite words as follows: $ \sigma(i_1 i_2 \ldots) = i_2 i_3 \ldots $ for $i_1 i_2 \ldots \in \Sinf $.
	
	For $a \in \Sigma $ and $n \in \N$, we write $a^{n}$ for the $n$-fold concatenation of $a$ with itself, and let $a^{\infty}$ be the infinite word with all letters equal to $a$.  For $\mbf{x} = \omega_1 \omega_2 \ldots \in \Sinf$ and $ n \in \N $, set $\mbf{x}|_n \coloneqq \omega_1 \omega_2 \ldots \omega_n \in \Sigma^n$.  We call $V^n \coloneqq \{ 1^n, 2^n, 3^n \} $ the \textsl{boundary of $\Sigma^n$} and call $\tilde{\Sigma}^n \coloneqq \Sigma^n \setminus V^n $ the \textsl{interior of $ \Sigma^n$}.  Similarly, we set $ V^{\infty} \coloneqq \left\{ 1^{\infty}, 2^{\infty}, 3^{\infty} \right\} $ and $\tilde{\Sigma}^{\infty} \coloneqq \Sinf \setminus V^{\infty} $.  
	
	For $ m,n \in \N $ with $ m \leq n $ and $ \omega \in \Sigma^{m - 1} $, the set $ \Delta_{\omega}^n \coloneqq \{ \omega i_m \cdots i_n \colon i_m, \ldots, i_n \in \Sigma \} $ is called a \textsl{$ (m, n) $-cell}.   We refer to a $ (n, n) $-cell as  a $ n $-cell.  An element of the set $\partial \Delta_{\omega}^{n} \coloneqq \{  \omega i^{n - m + 1} \colon i \in \Sigma \}$ is called an \textsl{outer vertex} of the $ (m, n) $-cell $ \Delta_{\omega}^n $. Notice a $ n $-cell consists only of outer vertices.
	
	If $ \Gamma = (V,U) $ is a graph with vertex set $ V $ and edges set $ U $, we let $ (x,y) \in U $ denote an undirected edge from $ x$ to $y$, where $x, y \in V $.  For $n \in \mathbb{N}$, we define the graph $\Gamma^{n}$ with vertex set $ \Sigma^n $ as follows.  Set $ U^1 \coloneqq \{ (1,2), (1,3), (2,3) \} $ and $ \Gamma^1 \coloneqq (\Sigma^1, U^1) $. 
	Assume that $ \Gamma^{n - 1} = (\Sigma^{n - 1}, U^{n - 1}) $ has been defined for some $ n \in \N $.  Note that $  \Sigma^n = \bigcup_{i = 1}^3 \{ i \, \omega \colon \omega \in \Sigma^{n - 1} \, \} $.  Let $ (iu, iv) \in U^n $ if $ (u, v) \in U^{n - 1} $ for $ u, v \in \Sigma^{n - 1} $.  For each distinct pair $ k, l \in \Sigma $, we add three further edges $ (lk^{n - 1}, kl^{n - 1}) \in U^n $.   We define $ \Gamma^n \coloneqq (\Sigma^n, U^n) $. This procedure is illustrated in Figure~\ref{fig:Graphs_Gamma_n}. If $ (u, v) \in U^n $, then we call the states $ u,v \in \Sigma^n $ \textsl{neighbours}, and write $ u \sim v $.
	
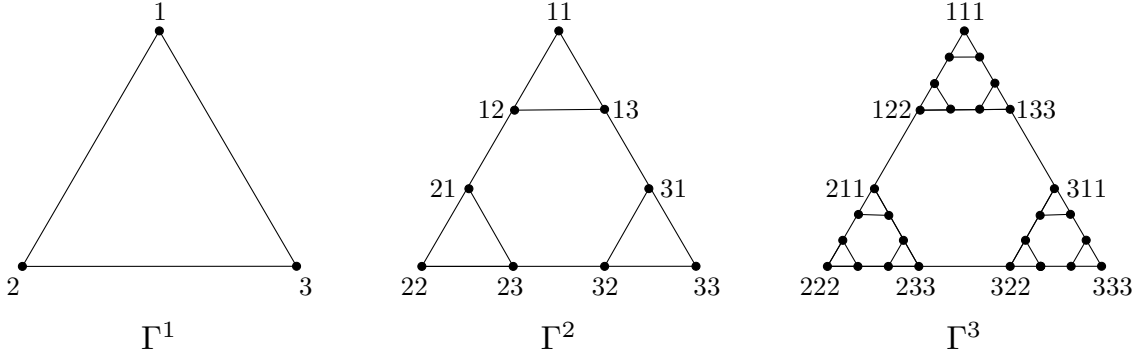
\begin{figure}
	\begin{minipage}{0.33\textwidth}
		\centering
		\begin{tikzpicture}[line cap=round,line join=round,>=triangle 45,x=1.0cm,y=1.0cm, scale=1, every node/.style={scale=1.}]
		\clip(2,0.2) rectangle (6.8,5.1);
		
		\draw [color=black] (2.54,1.42)-- (6.1455508529661005,1.421745472830631);
		\draw [color=black] (6.1455508529661005,1.421745472830631)-- (4.34126380267011,4.54337136972061);
		\draw [color=black] (4.34126380267011,4.54337136972061)-- (2.54,1.42);
		\draw [fill=black] (2.54,1.42) circle (1.5pt);
		\draw[color=black] (2.42,1.15) node {$ 2 $};
		
		\draw [fill=black] (6.1455508529661005,1.421745472830631) circle (1.5pt);
		\draw[color=black] (6.27,1.15) node {$ 3 $};
		
		\draw [fill=black] (4.34126380267011,4.54337136972061) circle (1.5pt);
		\draw[color=black] (4.35,4.8) node {$ 1 $};
		
		\draw[color=black] (4.34,0.5) node[scale=1.25] {$ \Gamma^1 $};
		\end{tikzpicture}
	\end{minipage}\hfill
	\begin{minipage}{0.33\textwidth}
		\centering

		\begin{tikzpicture}[line cap=round,line join=round,>=triangle 45,x=1.0cm,y=1.0cm, scale=1, every node/.style={scale=1.}]
		\clip(2,0.2) rectangle (6.8,5.1);
		
		\draw [color=black] (2.54,1.42)-- (6.1455508529661005,1.421745472830631);
		\draw [color=black] (6.1455508529661005,1.421745472830631)-- (4.34126380267011,4.54337136972061);
		\draw [color=black] (4.34126380267011,4.54337136972061)-- (2.54,1.42);
		\draw (2.54,1.42)-- (3.741850425154662,1.42);
		\draw (3.1591384971348377,2.450102890885205)-- (3.741850425154662,1.42);
		\draw (4.94370099114192,1.420581824413233)-- (5.525415285984578,2.4512484223207944);
		\draw (3.759755878631381,3.4915667401792634)-- (4.9431964826727395,3.5031206820547505);
		\draw [fill=black] (2.54,1.42) circle (1.5pt);
		\draw[color=black] (2.44,1.15) node {$ 22 $};
		
		\draw [fill=black] (6.1455508529661005,1.421745472830631) circle (1.5pt);
		\draw[color=black] (6.29,1.15) node {$ 33 $};
		
		\draw [fill=black] (4.34126380267011,4.54337136972061) circle (1.5pt);
		\draw[color=black] (4.35,4.8) node {$ 11 $};
		
		\draw [fill=black] (3.741850425154662,1.42) circle (1.5pt);
		\draw[color=black] (3.7,1.15) node {$ 23 $};
		
		\draw [fill=black] (3.1591384971348377,2.450102890885205) circle (1.5pt);
		\draw[color=black] (2.82,2.46) node {$ 21 $};
		
		\draw [fill=black] (5.525415285984578,2.4512484223207944) circle (1.5pt);
		\draw[color=black] (5.85,2.45) node {$ 31 $};
		
		\draw [fill=black] (4.94370099114192,1.420581824413233) circle (1.5pt);
		\draw[color=black] (4.95,1.15) node {$ 32 $};
		
		\draw [fill=black] (3.759755878631381,3.4915667401792634) circle (1.5pt);
		\draw[color=black] (3.45,3.5) node {$ 12 $};
		
		\draw [fill=black] (4.9431964826727395,3.5031206820547505) circle (1.5pt);
		\draw[color=black] (5.22,3.5) node {$ 13 $};
		
		\draw[color=black] (4.34,0.5) node[scale=1.25] {$ \Gamma^2 $};
		\end{tikzpicture}
	\end{minipage}\hfill
	\begin{minipage}{0.34\textwidth}
		\centering
		\begin{tikzpicture}[line cap=round,line join=round,>=triangle 45,x=1.0cm,y=1.0cm, scale=1, every node/.style={scale=1.}]
		\clip(2,0.2) rectangle (6.8,5.1);
		
		\draw [color=black] (2.54,1.42)-- (6.1455508529661005,1.421745472830631);
		\draw [color=black] (6.1455508529661005,1.421745472830631)-- (4.34126380267011,4.54337136972061);
		\draw [color=black] (4.34126380267011,4.54337136972061)-- (2.54,1.42);
		\draw (2.54,1.42)-- (3.741850425154662,1.42);
		\draw (3.1591384971348377,2.450102890885205)-- (3.741850425154662,1.42);
		\draw (4.94370099114192,1.420581824413233)-- (5.525415285984578,2.4512484223207944);
		\draw (3.759755878631381,3.4915667401792634)-- (4.9431964826727395,3.5031206820547505);
		
		\draw [fill=black] (2.54,1.42) circle (1.5pt);
		\draw[color=black] (2.44,1.15) node {$ 222 $};
		
		\draw [fill=black] (6.1455508529661005,1.421745472830631) circle (1.5pt);
		\draw[color=black] (6.29,1.15) node {$ 333 $};
		
		\draw [fill=black] (4.34126380267011,4.54337136972061) circle (1.5pt);
		\draw[color=black] (4.35,4.8) node {$ 111 $};
		
		\draw [fill=black] (3.741850425154662,1.42) circle (1.5pt);
		\draw[color=black] (3.7,1.15) node {$ 233 $};
		
		\draw [fill=black] (3.1591384971348377,2.450102890885205) circle (1.5pt);
		\draw[color=black] (2.78,2.45) node {$ 211 $};
		
		\draw [fill=black] (5.525415285984578,2.4512484223207944) circle (1.5pt);
		\draw[color=black] (5.95,2.45) node {$ 311 $};
		
		\draw [fill=black] (4.94370099114192,1.420581824413233) circle (1.5pt);
		\draw[color=black] (4.95,1.15) node {$ 322 $}; 
		
		\draw [fill=black] (3.759755878631381,3.4915667401792634) circle (1.5pt);
		\draw[color=black] (3.383,3.5) node {$ 122 $};
		
		\draw [fill=black] (4.9431964826727395,3.5031206820547505) circle (1.5pt);
		\draw[color=black] (5.283,3.5) node {$ 133 $}; 
		
		\draw [fill=black] (2.9406168083848874,1.42) circle (1.5pt);
		\draw [fill=black] (2.7420495149418573,1.7659332604307565) circle (1.5pt);
		\draw [fill=black] (2.948461587479838,2.109355085464961) circle (1.5pt);
		\draw [fill=black] (3.3471207044017284,2.0963282211990313) circle (1.5pt);
		\draw [fill=black] (3.3412336167697747,1.42) circle (1.5pt);
		\draw [fill=black] (3.539800910212805,1.7659332604307565) circle (1.5pt);
		\draw [fill=black] (5.9431664953000976,1.7674829439450774) circle (1.5pt);
		\draw [fill=black] (5.744934232358041,1.4213575900248316) circle (1.5pt);
		\draw [fill=black] (5.344317611749981,1.4209697072190324) circle (1.5pt);
		\draw [fill=black] (5.337775696419512,2.097291911449265) circle (1.5pt);
		\draw [fill=black] (5.344317611749981,1.4209697072190324) circle (1.5pt);
		\draw [fill=black] (5.344317611749981,1.4209697072190324) circle (1.5pt);
		\draw [fill=black] (5.736422013689717,2.1107047569507538) circle (1.5pt);
		\draw [fill=black] (5.14541547395062,1.7667105498691136) circle (1.5pt);
		\draw [fill=black] (3.949171980087133,3.844575759426359) circle (1.5pt);
		\draw [fill=black] (4.160111003663561,3.5060443442959937) circle (1.5pt);
		\draw [fill=black] (4.143038548477811,4.195232639096031) circle (1.5pt);
		\draw [fill=black] (4.541908029337653,4.196621140498657) circle (1.5pt);
		\draw [fill=black] (4.742552256005196,3.8498709112767027) circle (1.5pt);
		\draw [fill=black] (4.542582972044068,3.5047461857503825) circle (1.5pt);
		\draw (2.54,1.42)-- (3.741850425154662,1.42);
		\draw (3.1591384971348377,2.450102890885205)-- (3.741850425154662,1.42);
		\draw (4.94370099114192,1.420581824413233)-- (5.525415285984578,2.4512484223207944);
		\draw (3.759755878631381,3.4915667401792634)-- (4.9431964826727395,3.5031206820547505);
		\draw (2.54,1.42)-- (2.9406168083848874,1.42);
		\draw (2.54,1.42)-- (2.7420495149418573,1.7659332604307565);
		\draw (2.7420495149418573,1.7659332604307565)-- (2.9406168083848874,1.42);
		\draw (3.949171980087133,3.844575759426359)-- (4.160111003663561,3.5060443442959937);
		\draw (4.542582972044068,3.5047461857503825)-- (4.742552256005196,3.8498709112767027);
		\draw (4.541908029337653,4.196621140498657)-- (4.143038548477811,4.195232639096031);
		\draw (5.337775696419512,2.097291911449265)-- (5.736422013689717,2.1107047569507538);
		\draw (5.9431664953000976,1.7674829439450774)-- (5.744934232358041,1.4213575900248316);
		\draw (5.344317611749981,1.4209697072190324)-- (5.14541547395062,1.7667105498691136);
		\draw (3.539800910212805,1.7659332604307565)-- (3.3412336167697747,1.42);
		\draw (3.3471207044017284,2.0963282211990313)-- (2.948461587479838,2.109355085464961);
		
		\draw[color=black] (4.34,0.5) node[scale=1.25] {$ \Gamma^3 $};
		\end{tikzpicture} 
		
	\end{minipage}
	\caption{The Graphs $ \Gamma^1 $, $ \Gamma^2 $ and $ \Gamma^3 $.} \label{fig:Graphs_Gamma_n}
\end{figure}
	
	\subsection{The standard projection} \label{Standard_projection}
	
	Let $ q_1 = (1/2, \sqrt{3}/2) $,  $ q_2 = (0,0) $ and $ q_3 = (1,0) $. For $i \in \Sigma$, define $ S_i \colon \R^2 \to \R^2 $ by $ S_i(x) \coloneqq \frac{1}{2} \, (x + q_i)$.  The Sierpi\'nski gasket is the attractor of the iterated function system $ (\R^2; S_1, S_2, S_3) $, that is the unique non-empty compact set $ \mathcal{K} $ satisfying $ \mathcal{K} = S_1(\mathcal{K}) \cup S_2(\mathcal{K}) \cup S_3(\mathcal{K})$;  see \cite{KF:1990} for further details.  For $ m \in \N $ and $ \omega = \omega_1 \ldots \omega_m \in \Sigma^m $ define $ S_{\omega} \coloneqq S_{\omega_1} \circ \cdots \circ S_{\omega_m} $ and for $ \omega = \vartheta $ set $ S_{\omega} \coloneqq \text{id} $.  Notice, $ u \sim v $ is equivalent to $ S_u(\mathcal{K}) \cap S_v(\mathcal{K}) \neq \emptyset$.
	
	To prove that the Martin boundary is homeomorphic to the Sierpi\'nski gasket, we use the \textsl{standard projection} $ \pi \colon \Sinf \to \mathcal{K} $ defined by $ \pi(\mbf x) = \lim_{n \to \infty} S_{i_1} \circ \cdots \circ S_{i_n}(x_0) $ for $ \mbf x = i_1 i_2 \ldots  \in \Sinf $. Here, $ x_0 \in \R^2 $ is arbitrary and the definition of $ \pi $ is independent of the choice of $ x_0 $. Two states $ \mbf{x,y} \in \Sinf $ are called \textsl{$ \pi $-equivalent}, denoted by $ \mbf x \aq{\pi} \mbf y $, if $ \pi(\mbf x ) = \pi(\mbf y) $.  Two distinct states $ \mbf x = i_1 i_2 \ldots \in \Sinf $ and $ \mbf y = j_1 j_2 \ldots \in \Sinf $ are $ \pi $-equivalent if and only if there exist a $ m \in \N_0 $ such that $ i_p = j_p $ for all $ p \in \{1, \ldots, m \} $ and $ i_{m + 1} i_{m + 2} \ldots = l k^{\infty} $ and $ j_{m + 1} j_{m + 2} \ldots = k l^{\infty} $ for some $ k,l \in \Sigma $ distinct. 
	
	\subsection{The Markov chain} \label{The_Markov_chain}
	
	Throughout this section let $n, k \in \N$ with $k \leq n$ and $ \omega \in \Sigma^{k-1} $ be fixed.  Each vertex $ u = \omega ij^{n - k} \in \tilde{\Sigma}^n $  has three neighbouring vertices in $ \Sigma^n $ and is a junction point of three edges in $ \Gamma^n $, of which two are lying `on a line', see Figure~\ref{fig:bild-mitte}. This can be considered as $ u $ connecting a short and a long line in the graph $ \Gamma^n $.  Formally, the neighbours of $ u $ are the other two vertices in the $ n $-cell containing $ u $, that is $ z = \omega i j^{n - k - 1} l $ and $ v = \omega i j^{n - k - 1} i $ with pairwise distinct $ i, j, l \in \Sigma $, and the word $ \pi $-equivalent to $ u $, that is $ w = \omega j i^{n-k} $.  The two neighbouring vertices of $ u $ `on a line' are $ v $ and the $ \pi $-equivalent word $ w $, see Figure~\ref{fig:bild-rechts}. 
	
	We assign one probability to stay on the outer part of a cell in $ \Gamma^n $ and another to walk into a deeper cell of $ \Gamma^n $. Namely, for $ p \in (0,1/2) $ and $ q \coloneqq 1 - 2 \, p $, we let $ (X_n)_{n \in \N_0} $ denote the Markov chain with origin $ \vartheta $, state space $ \Sigma^* $ and transition probability matrix $P$ given by, $ P(\vartheta, i) \coloneqq 1/3 $ for each $ i \in \Sigma $ and
	\begin{equation*}
	P(u,v) \coloneqq 
	\begin{cases}
	p & \text{ if } u = \omega ij^{n-k} \in \tilde{\Sigma}^n, v \in \Sigma^n \text{ and } u \aq{\pi} v \text{ or } v = \omega i j^{n-k-1} i \text{ for distinct } i, j \in \Sigma, \\
	q & \text{ if } u = \omega ij^{n-k} \in \tilde{\Sigma}^n, v \in \Sigma^n \text{ and } v = \omega i j^{n-k-1} l \text{ for pairwise distinct } i, j, l \in \Sigma, \\
	1/3 & \text{ if } u \in V^n \text{ and } v = ui \text{ for } i \in \Sigma, \\
	0 & \text{ otherwise}.
	\end{cases}
	\end{equation*}
	For $ p = 1/3 $ the above Markov chain coincides with the one in \cite{Lau}. If the chain starts at a word in $ \tilde{\Sigma}^n $, then it walks to one of its three neighbours, see Figures~\ref{fig:bild-mitte} and~\ref{fig:bild-rechts}.  If the chain hits an element $ u \in V^n $, then it moves to $ ui \in V^{n + 1} $, $ i \in \Sigma $, on the next level, see Figure~\ref{fig:bild-links}. Thus, the three boundary vertices $ V^n $ of $ \Gamma^n $ play an important role in the definition of our Markov chain as they act similar to an absorbing state, meaning once the chain hits one of these vertices, it must move to the next level and cannot return to prior states.
	
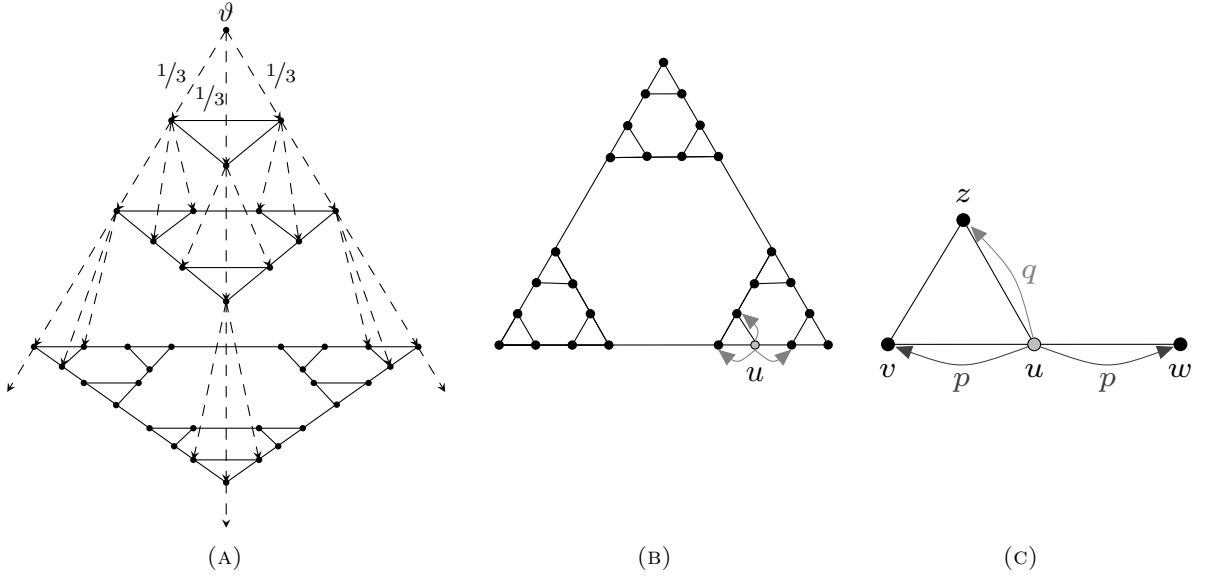
\begin{figure}
	\centering %	
	\begin{subfigure}[b]{0.415\textwidth}
		\centering
		\begin{tikzpicture}[line cap=round,line join=round,>=triangle 45,x=.9cm,y=.75cm, scale=0.8, every node/.style={scale=1.}]
		\clip(-4.5,-1) rectangle (4.5,11);
		\node (0) at (0,10) {};
		\fill [color=black] (0,10) circle (1.5pt);
		\node (1) at (0,7) {};
		\fill [color=black] (0,7) circle (1.5pt);
		\node (2) at (1,8) {};
		\fill [color=black] (1,8) circle (1.5pt);
		\node (3) at (-1,8) {};
		\fill [color=black] (-1,8) circle (1.5pt);
		\node (11) at (0,4) {};
		\fill [color=black] (0,4) circle (1.5pt);
		\node (22) at (2,6) {};
		\fill [color=black] (2,6) circle (1.5pt);
		\node (33) at (-2,6) {};
		\fill [color=black] (-2,6) circle (1.5pt);
		\node (12) at (.8,4.75) {};
		\fill [color=black] (0.8,4.75) circle (1.5pt);
		\node (13) at (-.8,4.75) {};
		\fill [color=black] (-0.8,4.75) circle (1.5pt);
		\node (21) at (1.33,5.33) {};
		\fill [color=black] (1.33,5.33) circle (1.5pt);
		\node (23) at (.6,6) {};
		\fill [color=black] (.6,6) circle (1.5pt);
		\node (32) at (-.6,6) {};
		\fill [color=black] (-.6,6) circle (1.5pt);
		\node (31) at (-1.33,5.33) {};
		\fill [color=black] (-1.33,5.33) circle (1.5pt);
		\node (111) at (0,0) {};
		\fill [color=black] (0,0) circle (1.5pt);
		\node (112) at (0.6,0.5) {};
		\fill [color=black] (0.6,0.5) circle (1.5pt);
		\node (113) at (-0.6,.5) {};
		\fill [color=black] (-0.6,0.5) circle (1.5pt);
		\node (121) at (.95,.8) {};
		\fill [color=black] (0.95,0.8) circle (1.5pt);
		\node (122) at (1.4,1.2) {};
		\fill [color=black] (1.4,1.2) circle (1.5pt);
		\node (123) at (.6,1.2) {};
		\fill [color=black] (0.6,1.2) circle (1.5pt);
		\node (131) at (-.95,.8) {};
		\fill [color=black] (-0.95,0.8) circle (1.5pt);
		\node (132) at (-0.6,1.2) {};
		\fill [color=black] (-0.6,1.2) circle (1.5pt);
		\node (133) at (-1.4,1.2) {};
		\fill [color=black] (-1.4,1.2) circle (1.5pt);
		\node (211) at (2,1.7) {};
		\fill [color=black] (2.02,1.72) circle (1.5pt);
		\node (212) at (2.6,2.2) {};
		\fill [color=black] (2.6,2.2) circle (1.5pt);
		\node (213) at (1.6,2.2) {};
		\fill [color=black] (1.6,2.2) circle (1.5pt);
		\node (221) at (3,2.56) {};
		\fill [color=black] (3,2.56) circle (1.5pt);
		\node (222) at (3.5,3) {};
		\fill [color=black] (3.51,3) circle (1.5pt);
		\node (223) at (2.6,3) {};
		\fill [color=black] (2.6,3) circle (1.5pt);
		\node (231) at (1.4,2.5) {};
		\fill [color=black] (1.4,2.5) circle (1.5pt);
		\node (232) at (1.8,3) {};
		\fill [color=black] (1.8,3) circle (1.5pt);
		\node (233) at (1,3) {};
		\fill [color=black] (1,3) circle (1.5pt);
		\node (311) at (-2,1.7) {};
		\fill [color=black] (-2.01,1.71) circle (1.5pt);
		\node (312) at (-1.6,2.2) {};
		\fill [color=black] (-1.6,2.2) circle (1.5pt);
		\node (313) at (-2.6,2.2) {};
		\fill [color=black] (-2.6,2.2) circle (1.5pt);
		\node (321) at (-1.4,2.5) {};
		\fill [color=black] (-1.4,2.5) circle (1.5pt);
		\node (322) at (-1,3) {};
		\fill [color=black] (-1,3) circle (1.5pt);
		\node (323) at (-1.8,3) {};
		\fill [color=black] (-1.8,3) circle (1.5pt);
		\node (331) at (-3,2.56) {};
		\fill [color=black] (-3,2.56) circle (1.5pt);
		\node (332) at (-2.6,3) {};
		\fill [color=black] (-2.6,3) circle (1.5pt);
		\node (333) at (-3.5,3) {};
		\fill [color=black] (-3.51,3) circle (1.5pt);

		\draw (0,7) -- (1,8) -- (-1,8) -- (0,7);
		\draw (0,4) -- (2,6) -- (-2,6) -- (0,4);
		\draw (0,0) -- (3.5,3) -- (-3.5,3) -- (0,0);
		
		\draw (-1.33,5.33) -- (-.6,6);
		\draw (1.33,5.33) -- (.6,6);
		\draw (-.8,4.75) -- (.8,4.75);
		
		\draw (-0.6,.5) -- (0.6,.5);
		\draw (.6,1.2) -- (.95,.8);
		\draw (-.6,1.2) -- (-.95,.8);
		
		\draw (2.6,2.2) -- (1.6,2.2);
		\draw (2.6,3) -- (3,2.56);
		\draw (1.4,2.5) -- (1.8,3);
		
		\draw (-2.6,2.2) -- (-1.6,2.2);
		\draw (-2.6,3) -- (-3,2.56);
		\draw (-1.4,2.5) -- (-1.8,3);
		
		\draw (-2,1.7)-- (-1,3);
		\draw (1,3)-- (2.,1.7);
		\draw (-1.4,1.2)-- (1.4,1.2);
		
		\draw [>=stealth, ->, dash pattern=on 4pt off 4pt] (0,10)-- (0,7);
		\draw [>=stealth, ->, dash pattern=on 4pt off 4pt] (0,10)-- (1,8);
		\draw [>=stealth, ->, dash pattern=on 4pt off 4pt] (0,10)-- (-1,8);
		
		\draw [>=stealth, ->, dash pattern=on 4pt off 4pt] (0,7)-- (0,4);
		\draw [>=stealth, ->, dash pattern=on 4pt off 4pt] (1,8)-- (2,6);
		\draw [>=stealth, ->, dash pattern=on 4pt off 4pt] (-1,8)-- (-2,6);
		
		\draw [>=stealth, ->, dash pattern=on 4pt off 4pt] (0,4)-- (0,0);
		\draw [>=stealth, ->, dash pattern=on 4pt off 4pt] (2,6)-- (3.5,3);
		\draw [>=stealth, ->, dash pattern=on 4pt off 4pt] (-2,6)-- (-3.5,3);
		
		\draw [>=stealth, ->, dash pattern=on 4pt off 4pt] (0,0)-- (0,-1);
		\draw [>=stealth, ->, dash pattern=on 4pt off 4pt] (3.5,3)-- (4,2);
		\draw [>=stealth, ->, dash pattern=on 4pt off 4pt] (-3.5,3)-- (-4,2);
		
		\draw [>=stealth, ->, dash pattern=on 4pt off 4pt] (0,7)-- (.8,4.75);
		\draw [>=stealth, ->, dash pattern=on 4pt off 4pt] (0,7)-- (-.8,4.75);
		
		\draw [>=stealth, ->, dash pattern=on 4pt off 4pt] (-1,8)-- (-1.33,5.33);
		\draw [>=stealth, ->, dash pattern=on 4pt off 4pt] (-1,8)-- (-.6,6);
		
		\draw [>=stealth, ->, dash pattern=on 4pt off 4pt] (1,8)-- (1.33,5.33);
		\draw [>=stealth, ->, dash pattern=on 4pt off 4pt] (1,8)-- (.6,6);
		
		\draw [>=stealth, ->, dash pattern=on 4pt off 4pt] (-2,6)-- (-3,2.56);
		\draw [>=stealth, ->, dash pattern=on 4pt off 4pt] (-2,6)-- (-2.6,3);
		
		\draw [>=stealth, ->, dash pattern=on 4pt off 4pt] (2,6)-- (3,2.56);
		\draw [>=stealth, ->, dash pattern=on 4pt off 4pt] (2,6)-- (2.6,3);
		
		\draw [>=stealth, ->, dash pattern=on 4pt off 4pt] (0,4)-- (0.6,0.5);
		\draw [>=stealth, ->, dash pattern=on 4pt off 4pt] (0,4)-- (-0.6,0.5);

		\node (theta) at (0,10.4) {$\vartheta$};
		\node (p1) at (-1,9) {$\nicefrac{1}{3}$};
		\node (p2) at (1,9) {$\nicefrac{1}{3}$};
		\node (p1) at (-.3,8.5) {$\nicefrac{1}{3}$};
		\end{tikzpicture}
		\caption{}\label{fig:bild-links}
	\end{subfigure} %
	\begin{subfigure}[b]{0.28\textwidth}
		\centering
		\begin{tikzpicture}[line cap=round,line join=round,>=triangle 45,x=1.0cm,y=1.0cm, scale=1.2, every node/.style={scale=1.2}]
		\clip(2.4,-0.6) rectangle (6.4,5);
		
		\draw [color=black] (2.54,1.42)-- (6.1455508529661005,1.421745472830631);
		\draw [color=black] (6.1455508529661005,1.421745472830631)-- (4.34126380267011,4.54337136972061);
		\draw [color=black] (4.34126380267011,4.54337136972061)-- (2.54,1.42);
		\draw (2.54,1.42)-- (3.741850425154662,1.42);
		\draw (3.1591384971348377,2.450102890885205)-- (3.741850425154662,1.42);
		\draw (4.94370099114192,1.420581824413233)-- (5.525415285984578,2.4512484223207944);
		\draw (3.759755878631381,3.4915667401792634)-- (4.9431964826727395,3.5031206820547505);
		
		\draw [fill=black] (2.54,1.42) circle (1.3pt);
		
		\draw [fill=black] (6.1455508529661005,1.421745472830631) circle (1.3pt);
		
		\draw [fill=black] (4.34126380267011,4.54337136972061) circle (1.3pt);
		
		\draw [fill=black] (3.741850425154662,1.42) circle (1.3pt);
		
		\draw [fill=black] (3.1591384971348377,2.450102890885205) circle (1.3pt);
		
		\draw [fill=black] (5.525415285984578,2.4512484223207944) circle (1.3pt);
		
		\draw [fill=black] (4.94370099114192,1.420581824413233) circle (1.3pt);
		
		\draw [fill=black] (3.759755878631381,3.4915667401792634) circle (1.3pt);
		
		\draw [fill=black] (4.9431964826727395,3.5031206820547505) circle (1.3pt);
		
		\draw [fill=black] (2.9406168083848874,1.42) circle (1.3pt);
		\draw [fill=black] (2.7420495149418573,1.7659332604307565) circle (1.3pt);
		\draw [fill=black] (2.948461587479838,2.109355085464961) circle (1.3pt);
		\draw [fill=black] (3.3471207044017284,2.0963282211990313) circle (1.3pt);
		\draw [fill=black] (3.3412336167697747,1.42) circle (1.3pt);
		\draw [fill=black] (3.539800910212805,1.7659332604307565) circle (1.3pt);
		\draw [fill=black] (5.9431664953000976,1.7674829439450774) circle (1.3pt);
		\draw [fill=black] (5.744934232358041,1.4213575900248316) circle (1.3pt);
		
		\draw [fill=lightgray] (5.344317611749981,1.4209697072190324) circle (1.3pt);
		\node (1) at (5.344317611749981,1.1) {$ u $};
		
		\draw [fill=black] (5.337775696419512,2.097291911449265) circle (1.3pt);
		\draw [fill=black] (5.736422013689717,2.1107047569507538) circle (1.3pt);
		\draw [fill=black] (5.14541547395062,1.7667105498691136) circle (1.3pt);
		\draw [fill=black] (3.949171980087133,3.844575759426359) circle (1.3pt);
		\draw [fill=black] (4.160111003663561,3.5060443442959937) circle (1.3pt);
		\draw [fill=black] (4.143038548477811,4.195232639096031) circle (1.3pt);
		\draw [fill=black] (4.541908029337653,4.196621140498657) circle (1.3pt);
		\draw [fill=black] (4.742552256005196,3.8498709112767027) circle (1.3pt);
		\draw [fill=black] (4.542582972044068,3.5047461857503825) circle (1.3pt);
		\draw (2.54,1.42)-- (3.741850425154662,1.42);
		\draw (3.1591384971348377,2.450102890885205)-- (3.741850425154662,1.42);
		\draw (4.94370099114192,1.420581824413233)-- (5.525415285984578,2.4512484223207944);
		\draw (3.759755878631381,3.4915667401792634)-- (4.9431964826727395,3.5031206820547505);
		\draw (2.54,1.42)-- (2.9406168083848874,1.42);
		\draw (2.54,1.42)-- (2.7420495149418573,1.7659332604307565);
		\draw (2.7420495149418573,1.7659332604307565)-- (2.9406168083848874,1.42);
		\draw (3.949171980087133,3.844575759426359)-- (4.160111003663561,3.5060443442959937);
		\draw (4.542582972044068,3.5047461857503825)-- (4.742552256005196,3.8498709112767027);
		\draw (4.541908029337653,4.196621140498657)-- (4.143038548477811,4.195232639096031);
		\draw (5.337775696419512,2.097291911449265)-- (5.736422013689717,2.1107047569507538);
		\draw (5.9431664953000976,1.7674829439450774)-- (5.744934232358041,1.4213575900248316);
		\draw (5.344317611749981,1.46)-- (5.14541547395062,1.7667105498691136);
		\draw (3.539800910212805,1.7659332604307565)-- (3.3412336167697747,1.42);
		\draw (3.3471207044017284,2.0963282211990313)-- (2.948461587479838,2.109355085464961);

		\draw[gray, ->] (5.345,1.365) .. controls (5.544,1.2) .. (5.744,1.37);
		\draw[gray, ->] (5.345,1.365) .. controls (5.15,1.2) .. (4.94,1.37);
		\draw[gray, ->] (5.345,1.46) .. controls (5.4,1.6) .. (5.2,1.77);

		\end{tikzpicture} 
		\caption{} \label{fig:bild-mitte}
	\end{subfigure}
	\hfill
	\begin{subfigure}[b]{0.27\textwidth}
		\centering
		\begin{tikzpicture}[line cap=round,line join=round,>=triangle 45,x=1.0cm,y=1.0cm, scale=1.6, every node/.style={scale=1.6}]
		\clip(2.3,-.1) rectangle (5.2,3);
		\draw [color=black] (3.1591384971348377,2.450102890885205)-- (2.54,1.42);
		
		\draw (2.54,1.42)-- (4.94370099114192,1.420581824413233);
		
		\draw (3.1591384971348377,2.450102890885205)-- (3.741850425154662,1.42);

		\begin{scriptsize}
		\draw [fill=black] (2.54,1.42) circle (1.5pt);
		\node (1) at (2.54,1.2) {$ v $};
		\draw [fill=lightgray] (3.741850425154662,1.42) circle (1.5pt);
		\node (2) at (3.741850425154662,1.2) {$ u $};
		\draw [fill=black] (4.94370099114192,1.420581824413233) circle (1.5pt);
		\node (3) at (4.94370099114192,1.2) {$ w $};
		\draw [fill=black] (3.1591384971348377,2.450102890885205) circle (1.5pt);
		\node (4) at (3.1591384971348377,2.65) {$ z $};
		\draw[darkgray, <-] (2.6,1.4) .. controls (3.145,1.2) .. (3.69,1.4) node at (3.145,1.1) {$ p $};
		\draw[darkgray, ->] (3.79,1.4) .. controls (4.345,1.2) .. (4.89,1.4) node at (4.345,1.1) {$ p $};
		\draw[gray, ->] (3.73,1.485) .. controls (3.6,2.) .. (3.22,2.4) node at (3.7,2.) {$ q $};
		\end{scriptsize}
		\end{tikzpicture}
		\caption{}\label{fig:bild-rechts}
	\end{subfigure} 
	\caption{Transition probabilities of the Markov chain.} \label{fig:P}
\end{figure}
	
\section{General framework} \label{sec_frame}
	
This section is an overview of the framework given in \cite{Lau}, highlighting relevant changes in definitions, statement of results and proofs.
	
\subsection{Hitting probabilities and random matrix product} \label{Random_matrix_product}
	
We denote the probability, conditioned on starting at a state $ x \in \Sigma^* $, to eventually arrive at a state $y \in \Sigma^*$ by $\rho_{x, y} \coloneqq \P(\, \exists \, k \in \N_0 : X_k = y \, | \, X_0 = x) $.  In this section, for a given $n \in \N$ and $ x \in \Sigma^n $, we are concerned with computing $ \rho_i(x) \coloneqq \rho_{x, i^n} $, namely the probability to be absorbed by $ i^n $ when starting at $ x $.  To this end we define $\boldsymbol{\rho} \colon \Sigma^* \to [0, 1]^{3}$ by $ \boldsymbol{\rho}(x) \coloneqq [\rho_1(x), \rho_2(x), \rho_3(x)] $; that is the vector with the probabilities to be absorbed by one of the three vertices of $ V^n $ when starting at $ x \in \Sigma^n $. Note that $ \rho_i(j^n) = \delta_{ji} $ for $ i, j \in \Sigma $ and $n \in \N$.  Here, $ \delta_{ij} $ denotes the Kronecker delta symbol, namely $ \delta_{ij} = 1 $ if $ i = j $ and $ \delta_{ij} = 0 $ otherwise. 
	
On each level $ n \in \N $, the chain only moves to the next level if it reaches an element of $ V^{n} $. If the chain then lands in the interior of $ \Sigma^{n + 1} $, the  probability to reach a vertex in $ V^{n + 1} $ is, by symmetry, given by $ a_{n+1} \coloneqq \rho_1(1^{n}2) $, $ b_{n+1} \coloneqq \rho_2(1^{n}2) $ and $ c_{n+1} \coloneqq \rho_3(1^{n}2) $.  In the sequel, let $ x = i_1 \ldots i_n \in \tilde{\Sigma}^n $ be fixed, where $n \in \N$ with $n \geq 2$.  We have that
	\begin{align} \label{eq_cells_containing_x}
	x \in \Delta_{i_1 \ldots i_{n - 2} i_{n - 1}}^n \subset \Delta_{i_1 \ldots i_{n - 2}}^n \subset \ldots \subset \Delta_{i_1}^n.
	\end{align}
	This means that, starting at $ x $ and reaching one of the vertices in $ V^n \subset \partial \Delta_{i_1}^n $, one first needs to pass one of the three outer vertices of $ \Delta_{i_1 \ldots i_{n - 2}}^n $, then one of the outer vertices of $ \Delta_{i_1 \ldots i_{n - 3}}^n $ and so forth going through one of the outer vertices of each of the cells in \eqref{eq_cells_containing_x}.  To calculate $ \boldsymbol{\rho}(x) $, we look at the probabilities to hit the outer vertices of $ \Delta_{i_1 \ldots i_{k}}^n \supset \Delta_{i_1 \ldots i_{k + 1}}^n $ when starting at one of the outer vertices of $ \Delta_{i_1 \ldots i_{k + 1}}^n $ for each $ k \leq n - 2 $.  These are, by symmetry, the probabilities for the chain starting at $ 12^{k - 1} $, to reach the vertices  $ 1^k, 2^k, 3^k $. For ease of notation, set $ \alpha_n \coloneqq \rho_1(12^{n - 1}) $, $ \beta_n \coloneqq \rho_2(12^{n - 1}) $ and $ \gamma_n \coloneqq \rho_3(12^{n - 1}) $.  Figure \ref{fig:a_n} shows $ a_n, b_n, c_n $ and $ \alpha_n, \beta_n, \gamma_n $ for $ n = 2 $ and $ n = 3 $.
	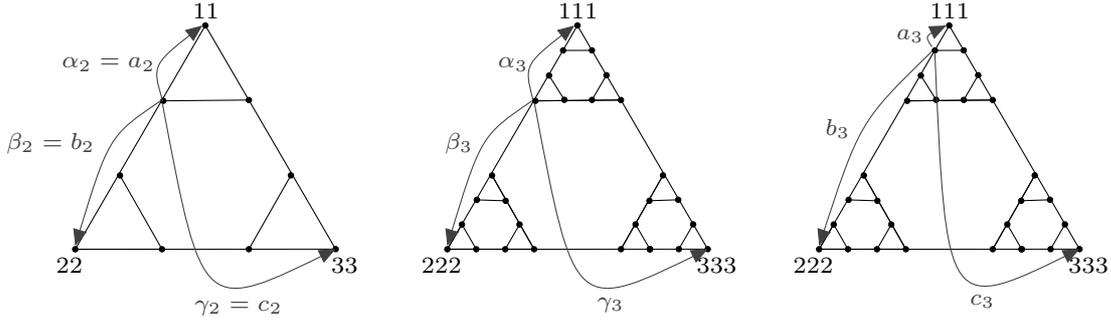
\begin{figure}
		\hspace{-2em}\begin{minipage}{0.3\textwidth}
			\centering
			\begin{tikzpicture}[line cap=round,line join=round,>=triangle 45,x=0.95cm,y=0.95cm, scale=1, every node/.style={scale=1.25}]
			\clip(1.5,0.4) rectangle (6.8,5);
			
			\draw [color=black] (2.54,1.42)-- (6.1455508529661005,1.421745472830631);
			\draw [color=black] (6.1455508529661005,1.421745472830631)-- (4.34126380267011,4.54337136972061);
			\draw [color=black] (4.34126380267011,4.54337136972061)-- (2.54,1.42);
			\draw (2.54,1.42)-- (3.741850425154662,1.42);
			\draw (3.1591384971348377,2.450102890885205)-- (3.741850425154662,1.42);
			\draw (4.94370099114192,1.420581824413233)-- (5.525415285984578,2.4512484223207944);
			\draw (3.759755878631381,3.4915667401792634)-- (4.9431964826727395,3.5031206820547505);
			\begin{scriptsize}
			\draw [fill=black] (2.54,1.42) circle (1pt);
			\draw[color=black] (2.46,1.2) node {$ 22 $};
			
			\draw [fill=black] (6.1455508529661005,1.421745472830631) circle (1pt);
			\draw[color=black] (6.27,1.2) node {$ 33 $};
			
			\draw [fill=black] (4.34126380267011,4.54337136972061) circle (1pt);
			\draw[color=black] (4.35,4.75) node {$ 11 $};
			
			\draw [fill=black] (3.741850425154662,1.42) circle (1pt);
			
			\draw [fill=black] (3.1591384971348377,2.450102890885205) circle (1pt);
			
			\draw [fill=black] (5.525415285984578,2.4512484223207944) circle (1pt);
			
			\draw [fill=black] (4.94370099114192,1.420581824413233) circle (1pt);
			
			\draw [fill=black] (3.759755878631381,3.4915667401792634) circle (1pt);
			
			\draw [fill=black] (4.9431964826727395,3.5031206820547505) circle (1pt);

			\draw[darkgray, ->] (3.75,3.495) .. controls (3.6,4) .. (4.3,4.54337136972061) node at (3,4) {$ \alpha_2 = a_2 $};
			
			\draw[darkgray, ->] (3.73,3.497) .. controls (3,3) .. (2.54,1.467) node at (2.2,2.9) {$ \beta_2 = b_2 $};
			
			\draw[darkgray, ->] (3.75,3.44) .. controls (4.3,0.5) .. (6.1455508529661005,1.421745472830631) node at (4.8,0.6) {$ \gamma_2 = c_2 $};
			
			\end{scriptsize}
			\end{tikzpicture}
		\end{minipage} 
		\begin{minipage}{0.3\textwidth}
			\centering
			\begin{tikzpicture}[line cap=round,line join=round,>=triangle 45,x=0.95cm,y=0.95cm, scale=1, every node/.style={scale=1.25}]
			\clip(1.5,0.4) rectangle (6.8,5);
						
			\draw [color=black] (2.54,1.42)-- (6.1455508529661005,1.421745472830631);
			\draw [color=black] (6.1455508529661005,1.421745472830631)-- (4.34126380267011,4.54337136972061);
			\draw [color=black] (4.34126380267011,4.54337136972061)-- (2.54,1.42);
			\draw (2.54,1.42)-- (3.741850425154662,1.42);
			\draw (3.1591384971348377,2.450102890885205)-- (3.741850425154662,1.42);
			\draw (4.94370099114192,1.420581824413233)-- (5.525415285984578,2.4512484223207944);
			\draw (3.759755878631381,3.4915667401792634)-- (4.9431964826727395,3.5031206820547505);
			\begin{scriptsize}
			\draw [fill=black] (2.54,1.42) circle (1pt);
			\draw[color=black] (2.46,1.2) node {$ 222 $};
			
			\draw [fill=black] (6.1455508529661005,1.421745472830631) circle (1pt);
			\draw[color=black] (6.27,1.2) node {$ 333 $};
			
			\draw [fill=black] (4.34126380267011,4.54337136972061) circle (1pt);
			\draw[color=black] (4.35,4.75) node {$ 111 $};
			
			\draw [fill=black] (3.741850425154662,1.42) circle (1pt);
			
			\draw [fill=black] (3.1591384971348377,2.450102890885205) circle (1pt);
			
			\draw [fill=black] (5.525415285984578,2.4512484223207944) circle (1pt);
			
			\draw [fill=black] (4.94370099114192,1.420581824413233) circle (1pt);
			
			\draw [fill=black] (3.759755878631381,3.4915667401792634) circle (1pt);
			
			\draw [fill=black] (4.9431964826727395,3.5031206820547505) circle (1pt);
			
			\draw [fill=black] (2.9406168083848874,1.42) circle (1pt);
			\draw [fill=black] (2.7420495149418573,1.7659332604307565) circle (1pt);
			\draw [fill=black] (2.948461587479838,2.109355085464961) circle (1pt);
			\draw [fill=black] (3.3471207044017284,2.0963282211990313) circle (1pt);
			\draw [fill=black] (3.3412336167697747,1.42) circle (1pt);
			\draw [fill=black] (3.539800910212805,1.7659332604307565) circle (1pt);
			\draw [fill=black] (5.9431664953000976,1.7674829439450774) circle (1pt);
			\draw [fill=black] (5.744934232358041,1.4213575900248316) circle (1pt);
			\draw [fill=black] (5.344317611749981,1.4209697072190324) circle (1pt);
			\draw [fill=black] (5.337775696419512,2.097291911449265) circle (1pt);
			\draw [fill=black] (5.344317611749981,1.4209697072190324) circle (1pt);
			\draw [fill=black] (5.344317611749981,1.4209697072190324) circle (1pt);
			\draw [fill=black] (5.736422013689717,2.1107047569507538) circle (1pt);
			\draw [fill=black] (5.14541547395062,1.7667105498691136) circle (1pt);
			\draw [fill=black] (3.949171980087133,3.844575759426359) circle (1pt);
			\draw [fill=black] (4.160111003663561,3.5060443442959937) circle (1pt);
			\draw [fill=black] (4.143038548477811,4.195232639096031) circle (1pt);
			\draw [fill=black] (4.541908029337653,4.196621140498657) circle (1pt);
			\draw [fill=black] (4.742552256005196,3.8498709112767027) circle (1pt);
			\draw [fill=black] (4.542582972044068,3.5047461857503825) circle (1pt);
			\draw (2.54,1.42)-- (3.741850425154662,1.42);
			\draw (3.1591384971348377,2.450102890885205)-- (3.741850425154662,1.42);
			\draw (4.94370099114192,1.420581824413233)-- (5.525415285984578,2.4512484223207944);
			\draw (3.759755878631381,3.4915667401792634)-- (4.9431964826727395,3.5031206820547505);
			\draw (2.54,1.42)-- (2.9406168083848874,1.42);
			\draw (2.54,1.42)-- (2.7420495149418573,1.7659332604307565);
			\draw (2.7420495149418573,1.7659332604307565)-- (2.9406168083848874,1.42);
			\draw (3.949171980087133,3.844575759426359)-- (4.160111003663561,3.5060443442959937);
			\draw (4.542582972044068,3.5047461857503825)-- (4.742552256005196,3.8498709112767027);
			\draw (4.541908029337653,4.196621140498657)-- (4.143038548477811,4.195232639096031);
			\draw (5.337775696419512,2.097291911449265)-- (5.736422013689717,2.1107047569507538);
			\draw (5.9431664953000976,1.7674829439450774)-- (5.744934232358041,1.4213575900248316);
			\draw (5.344317611749981,1.4209697072190324)-- (5.14541547395062,1.7667105498691136);
			\draw (3.539800910212805,1.7659332604307565)-- (3.3412336167697747,1.42);
			\draw (3.3471207044017284,2.0963282211990313)-- (2.948461587479838,2.109355085464961);

			\draw[darkgray, ->] (3.725,3.51) .. controls (3.6,4) .. (4.3,4.54337136972061) node at (3.45,4) {$ \alpha_3 $};
			
			\draw[darkgray, ->] (3.71,3.49) .. controls (3,3) .. (2.54,1.467) node at (2.7,2.9) {$ \beta_3 $};
			
			\draw[darkgray, ->] (3.75,3.44) .. controls (4.3,0.5) .. (6.1455508529661005,1.421745472830631) node at (4.8,0.65) {$ \gamma_3 $};
			\end{scriptsize}
			\end{tikzpicture}
		\end{minipage}
		\begin{minipage}{0.3\textwidth}
			\centering
			\begin{tikzpicture}[line cap=round,line join=round,>=triangle 45,x=0.95cm,y=0.95cm, scale=1, every node/.style={scale=1.25}]
			\clip(1.5,0.4) rectangle (6.8,5);
			
			\draw [color=black] (2.54,1.42)-- (6.1455508529661005,1.421745472830631);
			\draw [color=black] (6.1455508529661005,1.421745472830631)-- (4.34126380267011,4.54337136972061);
			\draw [color=black] (4.34126380267011,4.54337136972061)-- (2.54,1.42);
			\draw (2.54,1.42)-- (3.741850425154662,1.42);
			\draw (3.1591384971348377,2.450102890885205)-- (3.741850425154662,1.42);
			\draw (4.94370099114192,1.420581824413233)-- (5.525415285984578,2.4512484223207944);
			\draw (3.759755878631381,3.4915667401792634)-- (4.9431964826727395,3.5031206820547505);
			\begin{scriptsize}
			\draw [fill=black] (2.54,1.42) circle (1pt);
			\draw[color=black] (2.46,1.2) node {$ 222 $};
			
			\draw [fill=black] (6.1455508529661005,1.421745472830631) circle (1pt);
			\draw[color=black] (6.27,1.2) node {$ 333 $};
			
			\draw [fill=black] (4.34126380267011,4.54337136972061) circle (1pt);
			\draw[color=black] (4.35,4.75) node {$ 111 $};
			
			\draw [fill=black] (3.741850425154662,1.42) circle (1pt);
			
			\draw [fill=black] (3.1591384971348377,2.450102890885205) circle (1pt);
			
			\draw [fill=black] (5.525415285984578,2.4512484223207944) circle (1pt);
			
			\draw [fill=black] (4.94370099114192,1.420581824413233) circle (1pt);
			
			\draw [fill=black] (3.759755878631381,3.4915667401792634) circle (1pt);
			
			\draw [fill=black] (4.9431964826727395,3.5031206820547505) circle (1pt);
			
			\draw [fill=black] (2.9406168083848874,1.42) circle (1pt);
			\draw [fill=black] (2.7420495149418573,1.7659332604307565) circle (1pt);
			\draw [fill=black] (2.948461587479838,2.109355085464961) circle (1pt);
			\draw [fill=black] (3.3471207044017284,2.0963282211990313) circle (1pt);
			\draw [fill=black] (3.3412336167697747,1.42) circle (1pt);
			\draw [fill=black] (3.539800910212805,1.7659332604307565) circle (1pt);
			\draw [fill=black] (5.9431664953000976,1.7674829439450774) circle (1pt);
			\draw [fill=black] (5.744934232358041,1.4213575900248316) circle (1pt);
			\draw [fill=black] (5.344317611749981,1.4209697072190324) circle (1pt);
			\draw [fill=black] (5.337775696419512,2.097291911449265) circle (1pt);
			\draw [fill=black] (5.344317611749981,1.4209697072190324) circle (1pt);
			\draw [fill=black] (5.344317611749981,1.4209697072190324) circle (1pt);
			\draw [fill=black] (5.736422013689717,2.1107047569507538) circle (1pt);
			\draw [fill=black] (5.14541547395062,1.7667105498691136) circle (1pt);
			\draw [fill=black] (3.949171980087133,3.844575759426359) circle (1pt);
			\draw [fill=black] (4.160111003663561,3.5060443442959937) circle (1pt);
			\draw [fill=black] (4.143038548477811,4.195232639096031) circle (1pt);
			\draw [fill=black] (4.541908029337653,4.196621140498657) circle (1pt);
			\draw [fill=black] (4.742552256005196,3.8498709112767027) circle (1pt);
			\draw [fill=black] (4.542582972044068,3.5047461857503825) circle (1pt);
			\draw (2.54,1.42)-- (3.741850425154662,1.42);
			\draw (3.1591384971348377,2.450102890885205)-- (3.741850425154662,1.42);
			\draw (4.94370099114192,1.420581824413233)-- (5.525415285984578,2.4512484223207944);
			\draw (3.759755878631381,3.4915667401792634)-- (4.9431964826727395,3.5031206820547505);
			\draw (2.54,1.42)-- (2.9406168083848874,1.42);
			\draw (2.54,1.42)-- (2.7420495149418573,1.7659332604307565);
			\draw (2.7420495149418573,1.7659332604307565)-- (2.9406168083848874,1.42);
			\draw (3.949171980087133,3.844575759426359)-- (4.160111003663561,3.5060443442959937);
			\draw (4.542582972044068,3.5047461857503825)-- (4.742552256005196,3.8498709112767027);
			\draw (4.541908029337653,4.196621140498657)-- (4.143038548477811,4.195232639096031);
			\draw (5.337775696419512,2.097291911449265)-- (5.736422013689717,2.1107047569507538);
			\draw (5.9431664953000976,1.7674829439450774)-- (5.744934232358041,1.4213575900248316);
			\draw (5.344317611749981,1.4209697072190324)-- (5.14541547395062,1.7667105498691136);
			\draw (3.539800910212805,1.7659332604307565)-- (3.3412336167697747,1.42);
			\draw (3.3471207044017284,2.0963282211990313)-- (2.948461587479838,2.109355085464961);

			\draw[darkgray, ->] (4.1,4.25) .. controls (4.,4.4) .. (4.3,4.54337136972061) node at (3.8,4.4) {$ a_3 $};
			
			\draw[darkgray, ->] (4.1,4.18) .. controls (3.1,3.2) .. (2.54,1.467) node at (2.8,3.1) {$ b_3 $};
			
			\draw[darkgray, ->] (4.143038548477811,4.15) .. controls (4.3,0.5) .. (6.1455508529661005,1.421745472830631) node at (4.8,0.7) {$ c_3 $};
			\end{scriptsize}
			\end{tikzpicture} 
		\end{minipage}
		\caption{Probabilities $ a_n, b_n, c_n $ and $ \alpha_n, \beta_n, \gamma_n $ for $ n = 2 $ and $ n = 3 $.} \label{fig:a_n}
	\end{figure}
	For $ n \geq 2 $ define
	\begin{align*} 
	A_n^{(1)} \coloneqq 
	\begin{bmatrix}
	1 & 0 & 0 \\ \alpha_n & \beta_n & \gamma_n \\ \alpha_n & \gamma_n & \beta_n 
	\end{bmatrix}, \;
	A_n^{(2)} \coloneqq 
	\begin{bmatrix}
	\beta_n & \alpha_n & \gamma_n \\ 0 & 1 & 0 \\ \gamma_n & \alpha_n & \beta_n 
	\end{bmatrix}, \;
	A_n^{(3)} \coloneqq  
	\begin{bmatrix}
	\beta_n & \gamma_n & \alpha_n \\ \gamma_n & \beta_n & \alpha_n \\ 0 & 0 & 1
	\end{bmatrix}\!\!.
	\end{align*}
	The matrix $ A_n^{(i)} $ contains exactly the probabilities that the process, starting in one of the three vertices of $ \Delta_i^n $, reaches $ V^n $. More precisely,
	\begin{align*}
	A_n^{(i)} = 
	\begin{bmatrix}
	\bsr(i 1^{n - 1}) \\
	\bsr(i 2^{n - 1}) \\
	\bsr(i 3^{n - 1})
	\end{bmatrix}
	= \begin{bmatrix}
	\rho_1(i 1^{n - 1}) & \rho_2(i 1^{n - 1}) & \rho_3(i 1^{n - 1}) \\
	\rho_1(i 2^{n - 1}) & \rho_2(i 2^{n - 1}) & \rho_3(i 2^{n - 1}) \\
	\rho_1(i 3^{n - 1}) & \rho_2(i 3^{n - 1}) & \rho_3(i 3^{n - 1}) 
	\end{bmatrix}\!\!.
	\end{align*}
	Denote the standard $ i $-th row unit vector of $\R^{3}$ by $ \boldsymbol{e}_i $, for $ i \in \Sigma $.   With the above, we can express the hitting probability vector $ \boldsymbol{\rho}(x) $ as a matrix product, namely,
	\begin{align} \label{matrix_product}
	\boldsymbol{\rho}(x) = \boldsymbol{e}_{i_n} A_2^{(i_{n - 1})} \cdots A_n^{(i_1)}.
	\end{align}
	We investigate the limiting behaviour of these sequences of hitting probabilities to obtain the Martin boundary and the harmonic functions on the boundary. As in \cite{Lau}, this can be done by establishing recursive formulas for these sequences. We note, computing these limits is more involved than in \cite{Lau}. Detailed proofs are given in Section~\ref{sec_hitt_prob}. The main result is the following and is a consequence of Propositions~\ref{prop_limit_b_n} and~\ref{limit_alpha_beta_gamma}. 
	
	\begin{theorem} \label{prop_limits}
		We have that $\displaystyle \lim_{n \to \infty} (\alpha_{n}, \beta_{n}, \gamma_{n}) = (2/5, 2/5, 1/5)$
		and $\displaystyle \lim_{n \to \infty} (a_{n}, b_{n}, c_{n}) = (1, 0, 0)$.
	\end{theorem}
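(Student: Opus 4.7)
My plan is to establish the two limits separately. For the harder limit $(\alpha_n, \beta_n, \gamma_n) \to (2/5, 2/5, 1/5)$, I would perform a first-step analysis at the vertex $12^{n-1}$ and express the hitting-probability vectors of its three neighbours using the matrix-product formula~\eqref{matrix_product} together with the symmetries of the chain. Explicitly, starting at $12^{n-1}$ the chain moves with probability $p$ to the $\pi$-equivalent vertex $21^{n-1}$, whose hitting vector is $(\beta_n, \alpha_n, \gamma_n)$ by the $1 \leftrightarrow 2$ symmetry; with probability $p$ to $12^{n-2}1$; and with probability $q$ to $12^{n-2}3$. The vectors $\bsr(12^{n-2}1)$ and $\bsr(12^{n-2}3)$ are the first and third rows of $M_n \coloneqq A_2^{(2)} \cdots A_{n-1}^{(2)} A_n^{(1)}$, whose second row is $(\alpha_n, \beta_n, \gamma_n)$. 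Since the second row of each $A_k^{(2)}$ is $\boldsymbol{e}_2$, one only needs to track the first row of $A_2^{(2)} \cdots A_{n-1}^{(2)}$ (the third row being its mirror under $2 \leftrightarrow 3$). This collapses the situation into a finite-dimensional recursion for $(\alpha_n, \beta_n, \gamma_n)$ coupled with one auxiliary two-component sequence.

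Given the recursion, I would verify that $(2/5, 2/5, 1/5)$ is a fixed point independently of $p$, consistent with the classical harmonic-extension values for the Sierpi\'nski gasket. Convergence to this fixed point is the main obstacle: one must prove it uniformly for $p$ in the full range $(0, 1/2)$. The natural approach is a spectral analysis of the Jacobian of the recursion map at the fixed point, establishing a contraction rate strictly less than one, combined with an invariant-region argument confined to the probability simplex. An alternative is to construct a Lyapunov function, for instance a positive quadratic in the deviations from $(2/5, 2/5, 1/5)$, which is provably contracted by each step. The non-linearity of the recursion and its explicit dependence on $p$ make both routes delicate, which is why the paper devotes an entire section to these estimates.

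The second limit follows readily from the first. The matrix-product formula gives the one-step recursion
\[
(a_{n+1}, b_{n+1}, c_{n+1}) = (a_n, b_n, c_n)\, A_{n+1}^{(1)},
\]
and since $A_{n+1}^{(1)}$ is row-stochastic with first row $\boldsymbol{e}_1$, one reads off
\[
b_{n+1} + c_{n+1} = (b_n + c_n)\,(\beta_{n+1} + \gamma_{n+1}) = (b_n + c_n)\,(1 - \alpha_{n+1}).
\]
With $\alpha_n \to 2/5$ already in hand, there exist $\lambda < 1$ and $n_0 \in \N$ such that $1 - \alpha_n \le \lambda$ for $n \ge n_0$, so $b_n + c_n$ decays geometrically to zero. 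Combined with the stochasticity $a_n + b_n + c_n = 1$, this yields $a_n \to 1$ and $b_n, c_n \to 0$, as required.
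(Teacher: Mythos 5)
Your derivation of the recursion is sound and agrees with the paper's system \eqref{dependencies}, and the deduction $b_{n+1}+c_{n+1}=(b_n+c_n)(1-\alpha_{n+1})$ for the second limit is correct \emph{as a deduction}. The genuine gap is that the convergence of $(\alpha_n,\beta_n,\gamma_n)$ --- the entire substance of the theorem --- is left as a plan, and the plan as framed would not work. By \eqref{rec_formulas} the triple $(\alpha_{n+1},\beta_{n+1},\gamma_{n+1})$ is a function of $(b_n,c_n)$ alone (a ratio of a quadratic to a polynomial $d_n$ with nonvanishing linear part), and the driving pair $(b_n,c_n)$ tends to $(0,0)$, where these ratios are indeterminate of type $0/0$. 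The limiting value depends on the \emph{direction} of approach: along $c_n=0$ one finds $\beta_{n+1}\to(1-p)/(3-4p)$, along $b_n=0$ one finds $(1-p)/(2-p)$, and only along the diagonal $b_n/c_n\to 1$ does one obtain $2/5$ (these three values coincide exactly when $p=1/3$, the isotropic case). So $(2/5,2/5,1/5,0,0)$ is not a fixed point of a continuous map, there is no Jacobian to compute there, and a Lyapunov function in the $(\alpha,\beta,\gamma)$-deviations has nothing to contract. Any proof must identify $\lim_n b_n/c_n$, which your proposal never addresses.

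There is also a circularity risk in your ordering: you deduce $b_n+c_n\to 0$ from $\alpha_n\to 2/5$, but since $(\alpha_n,\beta_n,\gamma_n)$ is determined by $(b_n,c_n)$, any proof of the first limit must already control the asymptotics of $(b_n,c_n)$. The paper proceeds in the opposite order. It first establishes the a priori bounds $\beta_n\le 2/5$ for $p\le 1/3$ (Lemma~\ref{beta_kleiner_2_5}) and $\alpha_n\ge 2/5$ for $p\ge 1/3$ (Lemma~\ref{alpha_gr_2_5}) directly from the recursion and the ordering $b_n\ge c_n$ (Lemma~\ref{lemma_b_n_groesser_c_n}); these yield $b_n+c_n\to 0$ geometrically without presupposing the limits (Proposition~\ref{prop_limit_b_n}, Corollary~\ref{cor_lim_a_n_lim_c_n}). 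It then proves $b_n/c_n\to 1$ by an explicit contraction of the ratio $(b_n-c_n)/c_n$ with factor $1-p$ (Lemmata~\ref{lemma_formula_fraction} and~\ref{lemma_bound_1_p}, Corollary~\ref{cor_lim_b_n_over_c_n}), and only then substitutes into the formula for $\beta_{n+1}$ to obtain $2/5$ (Proposition~\ref{limit_alpha_beta_gamma}), with $\alpha_n$ handled via Lemma~\ref{gleicher_GW_beta_alpha} and $\gamma_n=1-\alpha_n-\beta_n$. To repair your proposal you must supply exactly these two missing ingredients: a $p$-uniform geometric bound on $b_n+c_n$ that does not assume $\alpha_n\to 2/5$, and the identification of $\lim_n b_n/c_n$.
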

	
	Interestingly, the limits of these sequences are independent of the chosen parameter $ p \in (0,1/2) $ and are equal to the ones obtained in the isotropic case considered in \cite{Lau}.  Loosely speaking, the Martin boundary describes all possible paths of the Markov chain at infinity. Therefore, it is not too surprising that the Martin boundary does not change in a non-isotropic setting, but that the above limits turn out to satisfy the ($1/5$)--($2/5$) law is.  It would be of interest to investigate what happen if one rotates the transition probability, that is, if instead of taking probability $p$ in the directions $[u, v]$ and $[u, w]$, and $q$ in the direction $[u, z]$, as depicted in Figure~\ref{fig:bild-rechts}, one took probability $p$ in the directions $[u, v]$ and $[u, z]$ and $q$ in the direction $[u, w]$.  Simulations indicate that the same results as in Theorem~\ref{prop_limits} may hold.

	\subsection{Limit of the random matrix product} \label{sec_limit_matrix_prod}
	
	Often, as we will see in the proof of Proposition~\ref{prop_limit_Green_function}, it is more convenient to look at the product of the last few matrices of our random matrix product. Define, for $ \mbf x = i_1 i_2 \ldots \in \Sinf $ and $ k \leq n $,
	\begin{align} \label{def_T_n}
	T_n^{\mbf x}  \coloneqq A_2^{(i_n)} \cdots A_{n - k + 1}^{(i_{k + 1})} A_{n - k + 2}^{(i_k)} \cdots A_{n + 1}^{(i_1)} \eqqcolon Q_{n,k}^{\mbf x}  R_{n,k}^{\mbf x} .
	\end{align}
	Here, $ R_{n,k}^{\mbf x}  \coloneqq A_{n - k + 2}^{(i_k)} \cdots A_{n + 1}^{(i_1)} $ and $ Q_{n,k}^{\mbf x}  \coloneqq A_2^{(i_n)} \cdots A_{n - k + 1}^{i_{k + 1}} $.  For $ i \in \Sigma $ let $ A^{(i)} \coloneqq \lim_{n \to \infty} A_n^{(i)} $. By Theorem~\ref{prop_limits}, these limits exist and we have 
	\begin{align} \label{def_matrices} 
	A^{(1)} = \begin{bmatrix}
	1 & 0 & 0 \\
	\nicefrac{2}{5} & \nicefrac{2}{5} & \nicefrac{1}{5} \\
	\nicefrac{2}{5} & \nicefrac{1}{5} & \nicefrac{2}{5}
	\end{bmatrix}, \;\,
	A^{(2)} = \begin{bmatrix}
	\nicefrac{2}{5} & \nicefrac{2}{5} & \nicefrac{1}{5} \\
	0 & 1 & 0 \\
	\nicefrac{1}{5} & \nicefrac{2}{5} & \nicefrac{2}{5}
	\end{bmatrix}, \;\,
	A^{(3)} = \begin{bmatrix}
	\nicefrac{2}{5} & \nicefrac{1}{5} & \nicefrac{2}{5} \\
	\nicefrac{1}{5} & \nicefrac{2}{5} & \nicefrac{2}{5} \\
	0 & 0 & 1
	\end{bmatrix}\!\!.
	\end{align}
	In \cite{Lau} it was shown that the limit of the random matrix product in \eqref{matrix_product} exists and that the limit matrix has identical rows. That is, for $ \mbf x = i_1 i_2 \ldots \in \Sinf $ and $ T_n^{\mbf x} $ as in \eqref{def_T_n}, we have, with \eqref{def_matrices}, that the limit
	\begin{align} \label{thm_T_infty}
	T_{\infty}^{\mbf x} \coloneqq \lim_{n \to \infty} T_n^{\mbf x}  = \lim_{k \to \infty} A^{(i_k)} \cdots A^{(i_1)}
	\end{align}
	exists. 
	To show this, they introduce the concepts of scrambling matrices and the minimum range of a matrix. These are both parameters which measure the difference of the rows of a matrix. 
	
	For the proof of Proposition~\ref{prop_limit_Green_function} concerning the limiting behaviour of the Green function, we require that the hitting probability vector given in \eqref{matrix_product} can be extended to $ \Sinf $. For $ \mbf x = i_1 i_2 \ldots, \mbf y = j_1 j_2 \ldots \in \Sinf $ with $ i_1 \neq j_1 $ we have $ T_{\infty}^{\mbf x} = T_{\infty}^{\mbf y} $ if and only if $ \mbf x = i j^{\infty} $ and $ \mbf y = j i^{\infty} $ with $ i = i_1 $ and $ j = j_1 $. Thus, by the equations given in \eqref{matrix_product} and \eqref{thm_T_infty}, for $ \mbf x \in \Sinf $, the limit
	\begin{align*} 
	\bsr(\mbf x) \coloneqq [\rho_1(\mbf x), \rho_2(\mbf x), \rho_3(\mbf x)] \coloneqq \lim_{n \to \infty} \bsr(\mbf x|_n)
	\end{align*}
	exists and we have, for $ \mbf{x,y} \in \Sinf $, that $ \bsr(\mbf x) = \bsr(\mbf y) $ if and only if $ \mbf x \aq{\pi} \mbf y $. 
	
	\subsection{The Martin space} \label{sec_Martin_metric}
	
	The \textsl{Green function} $G \colon \Sigma^* \times \Sigma^* \to \mathbb{R}^{+}_{0}$ is defined by $ G(x,y) \coloneqq \sum_{n = 0}^{\infty} P^n(x,y) $ for $ x, y \in \Sigma^* $. Observe $ G(x,y) $ is the expected number of visits to $ y $, starting from $ x $.  Further, set  $ \tilde{\rho}_{x,y} \coloneqq \P_x(\, \exists \, n \in \N : X_n = y \, | \, X_0 = x) $.  This value is often referred to as the \textsl{first return time}. Note that $ \tilde{\rho}_{x,y} = \rho_{x,y} $, if $ x \neq y $, and that $ G(x, y) = (\rho_{x,y})/(1 - \tilde{\rho}_{y,y}) $.  The \textsl{Martin kernel} is given by $ K(x,y) \coloneqq \rho_{x,y} / \rho_{\vartheta, y} = G(x,y) / G(\vartheta, y) $.  The latter equation follows from the fact that our Markov chain is transient. Moreover, in general,
	\begin{align} \label{prop_Martin_kernel_always_bounded}
	K(x,y) = \frac{\rho_{x,y}}{\rho_{\vartheta, y}} 
	\leq \frac{\rho_{x,y}}{\rho_{\vartheta, x} \rho_{x,y}} 
	= \frac{1}{\rho_{\vartheta, x}} 
	\eqqcolon C_x.
	\end{align} 
	Thus, for $ x, y \in \Sigma^* $, there exists a constant $ C_x > 0 $, independent of $ y $, such that $ K(x,y) \leq C_x $.
	The function $ \varrho \colon \Sigma^* \times \Sigma^* \to \R_{\geq 0} $ defined by
	\begin{align} \label{eq_def_Martin_metric}
	\varrho(x,y) \coloneqq \lvert r^{|x|} - r^{|y|}\rvert + \sum_{n = 0}^{\infty} r^n \sup_{z \in \Sigma^n} C_z^{-1} \, |K(z,x) - K(z,y)|,
	\end{align}
	where $ r \in (0,1) $ and $ C_z $ is the upper bound for $ K(z, \cdot) $ as given in \eqref{prop_Martin_kernel_always_bounded}, is called a \textsl{Martin metric}.  The Martin metric is indeed a metric on $ \Sigma^* $. A sequence $ (x_n)_{n \in \N} $ in $ \Sigma^* $ is a $ \varrho $-Cauchy sequence if and only if either $ \lim_{n \to \infty} x_n = x $ for some $ x \in \Sigma^* $, or $ |x_n| \to \infty $, where $ |x| $ denotes the length of $ x \in \Sigma^* $, and $ \lim_{n \to \infty} K(z,x_n) $ exists for all $ z \in \Sigma^* $. This property is a characteristic which has to be fulfilled by a Martin metric. Aside from this, there is some freedom in defining a Martin metric. We highlight that the metric considered in \cite{Lau} is different to the one in \eqref{eq_def_Martin_metric}.
	
	Define the equivalence relation $ \aq{\varrho} $ on the set of $ \varrho $-Cauchy sequences by 
	\begin{align*}
	(x_n)_{n \in \N} \aq{\varrho} (y_n)_{n \in \N} 
	\;\; \text{if and only if} \;\;
	\lim_{n \to \infty} \varrho(x_n, y_n) = 0.
	\end{align*}
	The \textsl{Martin space} $ \overline{\Sigma^*} $ of the Markov chain $(X_{n})_{n \in \N}$ is the set of all $ \varrho $-equivalence classes of $ \varrho $-Cauchy sequences in $ \Sigma^* $ and the \textsl{Martin boundary} is defined to be $ \M \coloneqq \partial \overline{\Sigma^*} \coloneqq \overline{\Sigma^*} \setminus \Sigma^*$.
	
	\subsection{The Green function at infinity} \label{The_Green_function}
	
	Here we show that $ \lim_{n \to \infty} G(i^{n-1}, \mbf y|_n) $ exists for all $ \mbf y \in \Sinf $, $ i \in \Sigma $ and $n \in \N$,  from which one may conclude that $ \lim_{n \to \infty} K(x, \mbf y|_n) $ exists for all $ \mbf y \in \Sinf $ and $ x \in \Sigma^* $.  The values of these limits of the Green function are required to extend the Martin metric to $ \Sinf/_{\aq{\pi}} $, which is an important step in proving that the Martin boundary and the Sierpi\'nski gasket are homeomorphic.
	
	\begin{proposition} \label{prop_limit_Green_function}
		Let $t \in \N_{0}$ and $ \mbf x = i^t i_{t + 1} \ldots \in \Sinf $ with $ i_{t + 1} \neq i $ and set $ c \coloneqq 1/(15 \, p) $. For distinct $ j, k \in \Sigma \setminus \{i\} $, we have
		\begin{enumerate}
			\item[(i)] $ \displaystyle\lim_{n \to \infty} G\left(j^{n - 1}, \mbf x|_n\right) = c \, (2 \, \rho_j(\sigma(\mbf x)) + \rho_k(\sigma(\mbf x))) $ and
			\item[(ii)] $ \displaystyle\lim_{n \to \infty} G\left(i^{n - 1}, \mbf x|_n\right) = c \, (5 \, \rho_i(\sigma(\mbf x)) + 2 \, \rho_j(\sigma(\mbf x)) + 2 \, \rho_k(\sigma(\mbf x))).$
		\end{enumerate} 
	\end{proposition}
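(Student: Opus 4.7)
The plan is to exploit the transience identity $G(x,y)=\rho_{x,y}/(1-\tilde\rho_{y,y})$ together with the level-structure of the chain. Since from $V^{n-1}$ the chain passes uniformly to the three successors at level $n$, and $\mbf x|_n\notin V^n$ (guaranteed by $i_{t+1}\neq i$), a one-step analysis at $j^{n-1}$ (resp.\ $i^{n-1}$), using $G(j^n,\mbf x|_n)=G(i^n,\mbf x|_n)=0$, yields
\[
G(j^{n-1},\mbf x|_n)=\tfrac{1}{3}\cdot\frac{\rho_{j^{n-1}i,\mbf x|_n}+\rho_{j^{n-1}k,\mbf x|_n}}{1-\tilde\rho_{\mbf x|_n,\mbf x|_n}},\qquad G(i^{n-1},\mbf x|_n)=\tfrac{1}{3}\cdot\frac{\rho_{i^{n-1}j,\mbf x|_n}+\rho_{i^{n-1}k,\mbf x|_n}}{1-\tilde\rho_{\mbf x|_n,\mbf x|_n}},
\]
reducing the task to analyzing the numerators and the escape denominator on $\Gamma^n$ killed at $V^n$.

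\textbf{Numerators.} A one-step analysis at the two non-trivial outer vertices of the $1$-cell $\Delta_{j^{n-1}}^n$ produces a $2\times 2$ linear system; summing the two equations and using $1-q=2p$, the cross terms cancel and one obtains the telescoping identity
\[
\rho_{j^{n-1}i,\mbf x|_n}+\rho_{j^{n-1}k,\mbf x|_n}=\tfrac{1}{2}\bigl(\rho_{j^{n-2}ij,\mbf x|_n}+\rho_{j^{n-2}kj,\mbf x|_n}\bigr),
\]
with the analogous identity for the $i^{n-1}$-numerator. Iterating through the nested cells $\Delta_{j^{n-1}}^n\subset\Delta_{j^{n-2}}^n\subset\cdots\subset\Delta_j^n$ pushes the source coordinates out to the outer vertices of $\Delta_j^n$, where the matrix product~\eqref{matrix_product} applies directly; concatenating with the descent into the deep cell $\Delta_{i^t}^n\ni\mbf x|_n$ prescribed by the prefix $i^t$ of $\mbf x$, the numerator becomes an explicit random matrix product. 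Its $n\to\infty$ limit, by \eqref{thm_T_infty} and Theorem~\ref{prop_limits}, is determined by the entries of $A^{(1)}, A^{(2)}, A^{(3)}$ of~\eqref{def_matrices} together with the extended hitting probability vector $\bsr(\sigma\mbf x)$; the shift $\sigma$ appears because the initial block $i^t$ is consumed during the descent $\Delta_i^n\supset\Delta_{i^2}^n\supset\cdots\supset\Delta_{i^t}^n$. The integer coefficients $(2,1)$ in~(i) and $(5,2,2)$ in~(ii) then emerge from the entries $2/5$, $1/5$ and the absorbing $1$ of these limit matrices, combined along the descent paths.

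\textbf{Denominator and combination.} The escape denominator is handled analogously via
\[
1-\tilde\rho_{\mbf x|_n,\mbf x|_n}=\sum_{v\sim\mbf x|_n}P(\mbf x|_n,v)\bigl(1-\rho_{v,\mbf x|_n}\bigr),
\]
where the three neighbours of $\mbf x|_n$ carry weights $p,p,q$; each $1-\rho_{v,\mbf x|_n}$ reduces by the same matrix-product method to an expression with the same $n$-dependent scaling as the numerator. On forming the ratio, this common scaling cancels, leaving a constant whose value, after a direct computation with the limit matrices in~\eqref{def_matrices}, yields (together with the $1/3$ prefactor) the announced constant $c=1/(15p)$.

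\textbf{Main obstacle.} The principal difficulty is the bookkeeping: both the numerators and the escape denominator vanish as $n\to\infty$, and only their ratio converges. Tracking the coefficients through the iterated one-step analyses in order to recover the precise integer coefficients $(2,1)$ and $(5,2,2)$, and to extract the factor $1/(15p)$, is the combinatorial heart of the proof. Their $p$-independence is a direct consequence of the $p$-independence of the limit matrices $A^{(i)}$ noted immediately after Theorem~\ref{prop_limits}, while the $1/p$ in $c$ traces back to the single surviving weight $p$ at the source $\mbf x|_n$ of the escape analysis.
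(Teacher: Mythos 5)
There is a genuine gap. Your reduction to the ratio $\rho_{j^{n-1}i,\mbf x|_n}+\rho_{j^{n-1}k,\mbf x|_n}$ over $1-\tilde\rho_{\mbf x|_n,\mbf x|_n}$ is correct as far as it goes, and your first ``telescoping'' identity is verifiable: from $\rho_{j^{n-1}i,y}=p\,\rho_{j^{n-2}ij,y}+q\,\rho_{j^{n-1}k,y}$ and its twin, summing gives $(1-q)S=p(\rho_{j^{n-2}ij,y}+\rho_{j^{n-2}kj,y})$, hence the factor $\tfrac12$. But the iteration does not continue in the claimed form. The vertices $j^{n-2}ij$ and $j^{n-2}kj$ are \emph{not} outer vertices of $\Delta_{j^{n-2}}^n$; they are interior to it, and their own one-step equations re-involve $\rho_{j^{n-1}i,y}$ and introduce new vertices (e.g.\ $\rho_{j^{n-2}ij,y}=q\,\rho_{j^{n-2}ik,y}+p\,\rho_{j^{n-2}i^2,y}+p\,\rho_{j^{n-1}i,y}$), so no $2\times2$ system with cancelling cross terms appears at the next stage. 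The correct way to push the source out to $\partial\Delta_j^n$ is the cut-set formula \eqref{thm_cut_set} with the matrices $A_m^{(\cdot)}$, and then the numerator decays because $b_{n},c_{n}\to0$, not through powers of $\tfrac12$. More seriously, your treatment of the denominator is only an assertion: the matrix-product formalism \eqref{matrix_product} computes hitting probabilities of \emph{outer} vertices of cells, and the paper explicitly notes (in the remark following the proof) that the decomposition fails when the target is an interior vertex such as $\mbf x|_n$ — which is exactly what $\rho_{v,\mbf x|_n}$ and $1-\tilde\rho_{\mbf x|_n,\mbf x|_n}$ require. Since both numerator and denominator vanish, extracting their common rate and the constant $1/(15p)$ is the entire content of the proposition, and no mechanism for it is supplied.

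The paper avoids all of this with one identity you did not find: for $\mbf x|_n\notin V^n$, $3p\,G(i^{n-1},\mbf x|_n)=G(\mbf x|_n,i^n)$. This reverses the roles of source and target, so the target becomes the corner $i^n$, which can never be revisited; hence $G(\cdot,i^n)=\rho_i(\cdot)$ exactly, with no vanishing denominator, and the cut-set decomposition \eqref{Notation_Green_fct} applies because $i^n$ is an outer vertex of every cell containing it. The limit then drops out of $\lim_n\boldsymbol e_{i_n}Q_{n-1,1}^{\mbf x}=\bsr(\sigma(\mbf x))$ and $\lim_n(\beta_n,\gamma_n)=(2/5,1/5)$, with the factor $1/(3p)$ from the reversal producing $c=1/(15p)$. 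If you want to salvage your route you would need an independent asymptotic analysis of the escape probability $1-\tilde\rho_{\mbf x|_n,\mbf x|_n}$; as written, the proposal does not prove the statement.
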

	
	For the proof of Proposition~\ref{prop_limit_Green_function} we require the following. For $m, n \in \N$ with $ m < n $ and $ x = i_1 \ldots i_n \in \Sigma^{n}$ consider the $ (m,n) $-cell $ \Delta_{i_1 \ldots i_{m - 1}}^n $ that contains $ x $. Let $ \tilde{\Delta}_{i_1 \ldots i_{m - 1}}^n \coloneqq \Delta_{i_1 \ldots i_{m - 1}}^n \setminus \partial \Delta_{i_{1} \ldots i_{m-1}}^{n} $ denote the \textsl{interior} of $ \Delta_{i_1 \ldots i_{m - 1}}^n $. For $ y \in \Sigma^n \setminus \tilde{\Delta}_{i_1 \ldots i_{m - 1}}^n $ define 
	\begin{align} \label{Notation_Green_fct_vector}
	\mbf G(\Delta_{i_1 \ldots i_{m - 1}}^n, y) \coloneqq 
	\begin{bmatrix}
	G(i_1 \ldots i_{m - 1} 1^{n - m + 1}, y) \\
	G(i_1 \ldots i_{m - 1} 2^{n - m + 1}, y) \\
	G(i_1 \ldots i_{m - 1} 3^{n - m + 1}, y) 
	\end{bmatrix}\!\!.
	\end{align}
	Since $ \Delta_{i_1 \ldots i_{n - 1}}^n \subset \Delta_{i_1 \ldots i_{m - 1}}^n $ and since $ y  \notin \tilde{\Delta}_{i_1 \ldots i_{m - 1}}^n $, the chain must pass through one of the outer vertices of $ \Delta_{i_1 \ldots i_{m - 1}}^n $, when starting at a vertex of $ \Delta_{i_1 \ldots i_{n - 1}}^n $,  before it can reach $ y $. By \eqref{matrix_product}, the probability to reach the outer vertices of $ \Delta_{i_1 \ldots i_{m - 1}}^n $, when starting at a vertex of $ \Delta_{i_1 \ldots i_{n - 1}}^n $, is given by 
	\begin{align*}
	Q_{n - 1, m - 1}^x = A_2^{(i_{n-1})} \cdots A_{n - m + 1}^{(i_{m})}.
	\end{align*} 
	Therefore, we have
	\begin{align} \label{Notation_Green_fct}
	\mbf G(\Delta_{i_1 \ldots i_{n - 1}}^n, y) 
	= Q_{n - 1, m - 1}^x \mbf G(\Delta_{i_1 \ldots i_{m - 1}}^n, y).
	\end{align}
	Indeed, here we have used the fact that 
	\begin{align}\label{thm_cut_point}
	\rho_{x,y} = \sum_{z \in \Sigma^*} P(x,z) \, \rho_{z,y} ,
	\end{align}
	for distinct $x, y \in \Sigma^*$, and that this equality generalises to subsets of $ \Sigma^* \setminus \{x\} $:  for $ A \subset \Sigma^* \setminus \{x\} $ such that each path from $ x $ to $ y $ contains a state of $ A $, we have
	\begin{align} \label{thm_cut_set}
	\rho_{x,y}
	= \sum_{a \in A} \P ( 
	\, \exists \; n \in \N \colon X_{n} = a \; \text{and} \; X_{m} \not\in A \; \forall \; m < n  
	\mid X_0 = x ) \, \rho_{a,y}.
	\end{align}
	We refer the reader to \cite{MR2548569} for a proof of \eqref{thm_cut_point} and \eqref{thm_cut_set}.
	
	\begin{proof}[Proof of Proposition~\ref{prop_limit_Green_function}]
		Let $t, n \in \N$ with $ t < n $ and let $ \mbf x = i^t i_{t + 1} \ldots \in \Sinf $ with $ i_{t + 1} \neq i $.
		
		We first prove statement (i). Note that $ j^n \notin \Delta_{i}^n $ for $ j \neq i $. Thus, with \eqref{Notation_Green_fct_vector} and \eqref{Notation_Green_fct} it follows that
		\begin{align} \label{proofeq1_limit_Green_fct}
		G\left(\mbf x|_n, j^n \right) = \boldsymbol e_{i_n} \mbf G\left(\Delta_{i^t i^{t + 1} i_{t + 2} \ldots i_{n - 1}}^n, j^n \right)
		= \boldsymbol e_{i_n} Q_{n - 1, 1}^{\mbf x} \mbf G\left(\Delta_i^n, j^n \right). 
		\end{align}
		By \eqref{matrix_product} and \eqref{thm_T_infty}, we have that
		\begin{align*}
		\lim_{n \to \infty} \boldsymbol e_{i_n} Q_{n - 1, 1}^{\mbf x}  
		= \lim_{n \to \infty} \boldsymbol e_{i_n} \A{2}{i_{n - 1}} \cdots \A{n - 1}{i_2} 
		= \bsr(\sigma(\mbf x)).
		\end{align*}
		Observe, for $ \mbf x \in \Sinf $ with $ \mbf x|_n \notin V^n $, that $ 3 p \, G\left(i^{n - 1}, \mbf x|_n \right) = G\left(\mbf x|_n, i^n \right) $. 
		
		For $ x \in \tilde{\Sigma}^n $, recall that $ G(x, j^n) = (\rho_{x,j^n})/(1 - \tilde{\rho}_{j^n,j^n}) = \rho_j(x) $. Therefore, $ G(ik^{n - 1}, j^n) = \gamma_n $ and $ G(ij^{n - 1}, j^n) = \beta_n $.  This in tandem with Proposition~\ref{prop_limits} and~\eqref{proofeq1_limit_Green_fct} implies that
		\begin{align*}
		\lim_{n \to \infty} G\left(j^{n - 1}, \mbf x|_n\right)
		&= \frac{1}{3p} \lim_{n \to \infty} G\left(\mbf x|_n, j^{n}\right)\\
		&=  \frac{1}{3p} \bsr(\sigma(\mbf x)) \lim_{n \to \infty} \mbf G\left(\Delta_i^n, j^{n}\right) \\
		&= \frac{1}{3p} \left( \rho_j(\sigma(\mbf x)) \lim_{n \to \infty} G\left(ij^{n - 1}, j^{n}\right) + \rho_k(\sigma(\mbf x)) \lim_{n \to \infty} G\left(ik^{n - 1}, j^{n}\right) \right) \\
		&= \frac{1}{3p} \left( \rho_j(\sigma(\mbf x)) \lim_{n \to \infty} \beta_n + \rho_k(\sigma(\mbf x)) \lim_{n \to \infty} \gamma_n \right)\\
		&= \frac{1}{3p} \frac{1}{5} \left( \, 2 \, \rho_j(\sigma(\mbf x)) + \rho_k(\sigma(\mbf x)) \, \right).
		\end{align*}
		This completes the proof of statement (i), and so, we turn our attention to the proof of statement (ii).  Without loss of generality assume $ i = 1 $ and $ i_{t + 1} = 2 $. Since $ 1^n \notin \Delta_{1^t 2}^n $, by \eqref{Notation_Green_fct}, we have that 
		$ \mbf G(\Delta_{1^t 2 i_{t+2} \ldots i_{n - 1}}, 1^n) = Q_{n - 1, t + 1}^{\mbf x} \mbf G(\Delta_{1^t 2}^n, 1^n)$.  Further, by \eqref{matrix_product}, we have 
		\begin{align*}
		\mbf G(\Delta^n_{1^t 2}, 1^n) 
		&= \begin{bmatrix}
		\boldsymbol{\rho}(1^t 2 1^{n - t - 1}) \\
		\boldsymbol{\rho}(1^t 2 2^{n - t - 1}) \\
		\boldsymbol{\rho}(1^t 2 3^{n - t - 1})
		\end{bmatrix}
		\boldsymbol{e}_1^T
		= A_{n - t}^{(2)} A_{n - t + 1}^{(1)} \cdots A_{n}^{(1)}  \boldsymbol{e}_1^T
		= A_{n - t}^{(2)} A_{n - t + 1}^{(1)} \cdots A_{n-1}^{(1)} \mbf G(\Delta_1^n, 1^n),
		\end{align*}
		and so 
		\begin{align*}
		\mbf G(\Delta_{1^t 2 i_{t + 2} \ldots i_{n - 1}}, 1^n) 
		= Q_{n - 1, t + 1}^{\mbf x}  A_{n - t}^{(2)} A_{n - t + 1}^{(1)} \cdots A_{n-1}^{(1)} \mbf G(\Delta_1^n, 1^n)  
		= Q_{n - 1, 1}^{\mbf x}  \mbf G(\Delta_1^n, 1^n).	
		\end{align*}
		The remainder of the proof follows analogously to that of statement (i), from which one obtains, 
		\[
		\lim_{n \to \infty} G\left(1^{n - 1}, \mbf x|_n\right)
		= \frac{1}{3p} \frac{1}{5} \left( \, 5 \, \rho_1(\sigma(\mbf x)) + 2 \, \rho_2(\sigma(\mbf x)) + 2 \, \rho_3(\sigma(\mbf x)) \, \right).
		\qedhere
		\]
	\end{proof}
	
	Above, we were able to decompose $ \mbf G(\Delta_{i^q j i_{q + 2} \ldots i_{n - 1}^n}, i^n) = Q_{n - 1, 1}^{\mbf x} \mbf G(\Delta_i^n, i^n) $, even if $ i^n \in \Delta_{i^l}^n $; for distinct $ i, j \in \Sigma $ and $ l \leq q $. Notice this is only possible since $ i^n $ is one of the three outer vertices of each $ n $-cell $ \Delta_{i^l}^n $. This provides a simpler and alternative proof to that given in \cite{Lau}. In general, for a state $ y \in \tilde{\Delta}_{i^l}^n $, the aforementioned decomposition is not possible, as the chain does not need to go via one of the outer vertices of the cell to reach $ y $. Note that the constant $ c $, which we obtain in the limits, is different from the one(s) in \cite{Lau}.
	
	With the above at hand, in particular Proposition~\ref{prop_limit_Green_function}, we may extend the Martin kernel $ K(x, \cdot \, ) $ continuously to $ \Sinf $.  For $ z \in \Sigma^m $ and $ y \in \tilde{\Sigma}^n $, for some $ n \geq m + 1 $,
	\begin{align} \label{eq:martin_eq}
	K(z,y) = \frac{\sum_{i = 1}^3 \rho_{z, i^{n - 1}} G\left(i^{n - 1}, y\right) }{\frac{1}{3} \sum_{i = 1}^3 G\left(i^{n - 1}, y\right)}.
	\end{align}
	
	\begin{proposition} \label{prop_K(x,.)ex_ist_stetig}
		For $ x \in \Sigma^* $ and $ \mbf y \in \Sinf $ the following limit exists.
		\begin{align*}
		K(x, \mbf y) \coloneqq \lim_{n \to \infty} K(x, \mbf y|_n)
		\end{align*}
	\end{proposition}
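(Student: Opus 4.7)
I would use the representation \eqref{eq:martin_eq},
\begin{equation*}
K(x, \mathbf{y}|_n) \;=\; \frac{\sum_{i \in \Sigma} \rho_{x, i^{n-1}} \, G(i^{n-1}, \mathbf{y}|_n)}{\tfrac{1}{3} \sum_{i \in \Sigma} G(i^{n-1}, \mathbf{y}|_n)},
\end{equation*}
valid as soon as $\mathbf{y}|_n \in \tilde{\Sigma}^n$, and verify that numerator and denominator each converge and that the denominator tends to a positive limit. The Green-function factors are handled directly by Proposition~\ref{prop_limit_Green_function} whenever $\mathbf{y} \notin V^\infty$; summing the two formulas there over $i \in \Sigma$ and using $\rho_{1}(\sigma(\mathbf{y})) + \rho_{2}(\sigma(\mathbf{y})) + \rho_{3}(\sigma(\mathbf{y})) = 1$ yields the constant limit $\lim_n \tfrac{1}{3} \sum_{i} G(i^{n-1}, \mathbf{y}|_n) = 5c/3 > 0$. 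The case $x = \vartheta$ is trivial since $K(\vartheta, \cdot) \equiv 1$, so below I take $m \coloneqq |x| \geq 1$.

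The main task is to prove that $\rho_{x, i^{n-1}}$ converges as $n \to \infty$ for each $i \in \Sigma$. Set $\mathbf{v}_n(x) \coloneqq (\rho_{x, 1^n}, \rho_{x, 2^n}, \rho_{x, 3^n})^T$. Since every trajectory from $x$ to $V^n$ passes through exactly one vertex of $V^{n-1}$ en route to level $n$, one has $\mathbf{v}_n(x) = M_n \, \mathbf{v}_{n-1}(x)$ with $(M_n)_{ij} = \rho_{j^{n-1}, i^n}$. A one-step decomposition together with the symmetry identifications $\rho_{i}(i^{n-1} l) = a_n$, $\rho_{i}(j^{n-1} i) = b_n$, and $\rho_{i}(j^{n-1} k) = c_n$ for pairwise distinct $i, j, k, l \in \Sigma$ yields $(M_n)_{ii} = (1+2a_n)/3$ and $(M_n)_{ij} = (b_n + c_n)/3$ for $i \neq j$. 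Using $a_n + b_n + c_n = 1$, this collapses to
\begin{equation*}
M_n \;=\; a_n I + \tfrac{1 - a_n}{3} J \;=\; I - (1 - a_n)\,P, \qquad P \coloneqq I - J/3,
\end{equation*}
where $J$ is the $3\times 3$ all-ones matrix and $P$ is the orthogonal projection onto the mean-zero subspace.

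Since each $M_n$ is a polynomial in the single projection $P$, the matrices pairwise commute and a short induction gives
\begin{equation*}
\prod_{k = m+1}^{n} M_k \;=\; I - \Bigl(1 - \prod_{k=m+1}^{n} a_k\Bigr) P.
\end{equation*}
As $a_k \in [0,1]$ and $a_k \to 1$ by Theorem~\ref{prop_limits}, the product $\prod_{k=m+1}^{n} a_k$ is non-increasing and bounded below by $0$, hence converges; consequently $\mathbf{v}_n(x)$ converges, and combining with the limits of the Green-function factors shows that the right-hand side of \eqref{eq:martin_eq} converges as $n \to \infty$. This is the main obstacle: Theorem~\ref{prop_limits} gives no rate for $a_n \to 1$, so a naive telescoping estimate is not obviously summable, but the projection structure reduces the a priori infinite matrix product to a single scalar infinite product, for which $a_n \to 1$ alone is enough.

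Finally, the residual case $\mathbf{y} = l^\infty \in V^\infty$ is outside the scope of \eqref{eq:martin_eq}, but the permutation symmetry of the chain on $\Sigma$ gives $\rho_{\vartheta, l^n} = 1/3$, so $K(x, l^n) = 3\,\rho_{x, l^n}$ and the conclusion follows at once from the convergence of $\mathbf{v}_n(x)$ established above.
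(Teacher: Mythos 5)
Your proposal is correct and follows essentially the route the paper indicates: it rests on the decomposition \eqref{eq:martin_eq} together with Proposition~\ref{prop_limit_Green_function}, which is exactly how the paper (which states the proposition without a written-out proof) sets things up. The one ingredient the paper leaves implicit, the convergence of $\rho_{x,i^{n-1}}$ as $n\to\infty$, you supply correctly and cleanly: the transfer matrices $M_n=a_nI+\tfrac{1-a_n}{3}J$ commute, collapse to the scalar product $\prod_k a_k$ on the mean-zero subspace, and that product converges by monotonicity alone, and your separate treatment of the cases $x=\vartheta$ and $\mathbf{y}=l^{\infty}$ (where \eqref{eq:martin_eq} does not apply) is also right.
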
 
	
	Note, in \cite{Lau}, a slightly different decomposition to that given in \eqref{eq:martin_eq} is used.
	
	\subsection{The Martin boundary} \label{The_Martin_metric_and_Martin_space}
	
	Proposition \ref{prop_K(x,.)ex_ist_stetig} implies, for $ \mbf x,  \mbf y  \in \Sinf $, that $ ( \mbf x|_n )_{n \in \N} $ and $ ( \mbf y|_n )_{n \in \N} $ are $ \varrho $-Cauchy sequences. Thus, $ \varrho(\mbf x|_n, \mbf y|_n) $ is a Cauchy sequence and $ \varrho(\mbf{x,y}) \coloneqq \lim_{n \to \infty} \varrho(\mbf x|_n, \mbf y|_n) $ is well defined.  Moreover, $ \mbf x \aq\pi \mbf y $ implies that $ \lim_{n \to \infty} K(z,\mbf x|_n ) = \lim_{n \to \infty} K(z,\mbf y|_n ) $ for all $ z \in \Sigma^* $. Hence, $ \Sinf/_{\aq\pi} $ is a subset of the Martin boundary and $ \varrho $ is well defined on $ \Sinf/_{\aq\pi} $.
	
	Define $ d \colon \Sinf \times \Sinf \to \R $ by $ d(\mbf{x,y}) \coloneqq 2^{-\max\{n \colon \mbf x|_n = \mbf y|_n\} } $. 
	It is well known that the function $ d $ is a metric on $ \Sinf $ and that $ (\Sinf, d) $ is a compact metric space, see for instance \cite{KF:1990}, \cite{Kig} and \cite{Str}. Moreover, $ \pi $ is continuous with respect to $ d $ and surjective.  Let $ \mathcal{Q} $ be the quotient topology induced from the standard metric $ d $ on the space $ \qur{\Sinf}{\pi} $. To prove that the Martin boundary $ \M $ is homeomorphic to the Sierpi\'nski gasket $\mathcal{K}$ it is necessary to show the homeomorphisms $(\mathcal{K}, |\cdot|) \cong (\qur{\Sinf}{\pi}, \mathcal{Q}) \cong (\qur{\Sinf}{\pi}, \varrho) \cong (\M, \varrho)$. Here, an important step is to show that the Martin metric $ \varrho $ defines a metric on $ \Sinf/_{\aq\pi} $. The main difficulty lies in showing that $ \mbf x \nsim_{\pi} \mbf y $ implies $ \varrho(\mbf{x,y}) > 0 $. The following theorem can be shown as in \cite{Lau}.
	
	\begin{theorem} \label{thm_M_homeo_K}
		The Martin boundary $ \M $ of $ (X_n)_{n \in \N_0} $ is homeomorphic to the Sierpi\'nski gasket $\mathcal{K}$.
	\end{theorem}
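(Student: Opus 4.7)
The plan is to establish the three homeomorphisms alluded to before the theorem statement, namely
\[
(\mathcal{K}, |\cdot|) \;\cong\; (\qur{\Sinf}{\pi}, \mathcal{Q}) \;\cong\; (\qur{\Sinf}{\pi}, \varrho) \;\cong\; (\M, \varrho),
\]
each requiring a different toolkit: the first is purely symbolic, the second combines the analysis of the hitting probabilities from Theorem~\ref{prop_limits} with compactness, and the third is a general Martin-boundary identification argument.

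For the first step, the standard projection $\pi\colon \Sinf\to \mathcal{K}$ is continuous with respect to $d$ and, by the characterisation of $\aq{\pi}$ recalled in Section~\ref{Standard_projection}, $\pi(\mbf x)=\pi(\mbf y)$ iff $\mbf x\aq{\pi}\mbf y$. Hence $\pi$ descends to a continuous bijection $\bar{\pi}\colon (\qur{\Sinf}{\pi},\mathcal{Q}) \to (\mathcal{K},|\cdot|)$. Since $(\Sinf,d)$ is compact, $(\qur{\Sinf}{\pi},\mathcal{Q})$ is compact, and $(\mathcal{K},|\cdot|)$ is Hausdorff, so $\bar{\pi}$ is a homeomorphism. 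For the second step, I would first check that $\varrho$ is well defined on $\qur{\Sinf}{\pi}$: this follows from Proposition~\ref{prop_K(x,.)ex_ist_stetig}, which turns the Cauchy sums in \eqref{eq_def_Martin_metric} into honest limits, together with the fact (noted in Section~\ref{sec_limit_matrix_prod}) that $\bsr(\mbf x) = \bsr(\mbf y)$ precisely when $\mbf x \aq{\pi} \mbf y$. Next, that the identity map $(\qur{\Sinf}{\pi},\mathcal{Q}) \to (\qur{\Sinf}{\pi},\varrho)$ is continuous: for $\mbf y^{(k)}\to \mbf y$ in $d$, one has $\mbf y^{(k)}|_n = \mbf y|_n$ eventually for each fixed $n$, so the normalised series $\sum_n r^n \sup_{z\in \Sigma^n} C_z^{-1}|K(z,\mbf y^{(k)}) - K(z,\mbf y)|$ is controlled by the uniform bound $K(z,\cdot)\le C_z$ in \eqref{prop_Martin_kernel_always_bounded} and by a dominated-convergence argument. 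A continuous bijection from a compact space to a Hausdorff space is then a homeomorphism, provided that $\varrho$ actually separates $\pi$-equivalence classes.

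For the third step, the map $\Phi\colon [\mbf x]_{\aq{\pi}} \mapsto [(\mbf x|_n)_{n\in\N}]_{\aq{\varrho}}$ is well defined and injective by the definition of $\aq{\varrho}$ and Proposition~\ref{prop_K(x,.)ex_ist_stetig}. Surjectivity is where the characterisation of $\varrho$-Cauchy sequences recalled in Section~\ref{sec_Martin_metric} enters: given a $\varrho$-Cauchy sequence $(x_n)$ in $\Sigma^*$ with $|x_n|\to \infty$, I would extend each $x_n$ arbitrarily to some $\hat x_n\in\Sinf$, extract a $d$-convergent subsequence by compactness, and verify using the formula for $\varrho$ together with $K(z,\mbf x) = \lim_m K(z,\mbf x|_m)$ that $(x_n)$ is $\varrho$-equivalent to $(\mbf x|_n)$ where $\mbf x$ is the limit. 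Continuity of $\Phi$ and of its inverse is again built directly into \eqref{eq_def_Martin_metric}, so combining with the earlier steps yields the desired homeomorphism $\M\cong \mathcal{K}$.

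The main obstacle is showing that $\varrho$ separates $\pi$-equivalence classes, i.e.\ $\mbf x \nsim_{\pi} \mbf y \Rightarrow \varrho(\mbf x, \mbf y)>0$, as emphasised just before the statement. For this I would locate the earliest level $m$ at which $\mbf x$ and $\mbf y$ enter distinct $m$-cells that are not glued by the $\pi$-relation, and exhibit a test state $z\in \Sigma^n$ sitting at the appropriate boundary vertex for which $|K(z,\mbf x) - K(z,\mbf y)|$ is bounded below by a positive quantity determined by the explicit limits in Theorem~\ref{prop_limits} and the Green-function asymptotics in Proposition~\ref{prop_limit_Green_function}. Such a lower bound, together with the factor $r^n C_z^{-1}$ in \eqref{eq_def_Martin_metric}, then forces $\varrho(\mbf x, \mbf y)>0$, and the argument of \cite{Lau} carries over with the modified constant $c=1/(15p)$ from Proposition~\ref{prop_limit_Green_function} without structural change.
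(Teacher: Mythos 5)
Your proposal is correct and follows essentially the same route as the paper, which itself only outlines the chain of homeomorphisms $(\mathcal{K},|\cdot|)\cong(\qur{\Sinf}{\pi},\mathcal{Q})\cong(\qur{\Sinf}{\pi},\varrho)\cong(\M,\varrho)$, identifies the separation property $\mbf x\nsim_{\pi}\mbf y\Rightarrow\varrho(\mbf x,\mbf y)>0$ as the main difficulty, and defers the details to \cite{Lau}. Your sketch fills in the compactness and Cauchy-sequence arguments at the right level of detail and correctly isolates the same key obstacle, resolving it by the same test-state strategy the paper attributes to \cite{Lau}.
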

	
	\subsection{Harmonic functions} \label{sec_harm_fct}

	A function $ u \colon \Sigma^* \to \R $ is called \textsl{$ P $-harmonic function} if for all $ x \in \Sigma^* $ we have $ P u(x) \coloneqq \sum_{y \in \Sigma^*} P(x,y) u(y) = u(x) $.
	Such a function $ u $ is called \textsl{minimal} if $u(\vartheta) = 1$ and if $ 0 \leq v(x) \leq u(x) $ for all $ x \in \Sigma^* $ and some $P$-harmonic function $ v $, implies that $ v = c \, u $ for some constant $ c \geq 0 $. 
	The \textsl{minimal Martin boundary or exit space} $ \mathcal{M}_{\text{min}} $ of a Markov chain is defined to be 
	\begin{align*}
	\M_{\text{min}} \coloneqq \{ \, \xi \in \mathcal{M} \; | \; K(\cdot, \xi) \text{ is a minimal harmonic function} \, \}.
	\end{align*}
	The minimal Martin boundary is a Borel subset of $ \M $, see for instance \cite{MR2548569}. In many cases, the minimal Martin boundary equals the Martin boundary. However, this is not the case in our setting, and in fact, we have the following result, which is a consequence of a general result that states that a Markov chain converges almost surely to the minimal Martin boundary, see for instance \cite{Dyn,KKS,MR2548569}.
	
	\begin{theorem} \label{thm_min_Martin_boundary}
		The minimal Martin boundary $\M_{\text{min}}$ of $ (X_n)_{n \in \N_0} $ is homeomorphic to $ \{1^{\infty}, 2^{\infty}, 3^{\infty} \} $.
	\end{theorem}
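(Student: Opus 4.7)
The plan is to combine the general theorem cited by the authors---that the chain converges almost surely in the Martin topology to a point of $\M_{\text{min}}$---with an explicit convex decomposition of the Martin kernel at an arbitrary boundary point.

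Set $\tau_n \coloneqq \inf\{k \colon X_k \in V^n\}$, finite almost surely by transience, and let $J_n \in \Sigma$ be defined by $X_{\tau_n} = J_n^n$. By Doob's theorem the chain converges almost surely in the Martin topology, so in particular along the subsequence $(\tau_n)$: $X_{\tau_n} \to X_\infty$. Under the homeomorphism $\M \cong \mathcal{K}$ of Theorem~\ref{thm_M_homeo_K}, $X_{\tau_n}$ corresponds to $q_{J_n} \in \{q_1, q_2, q_3\}$, and a convergent sequence in a finite set is eventually constant; hence $J_\infty \coloneqq \lim_n J_n \in \Sigma$ exists almost surely and $X_\infty = \pi(J_\infty^\infty)$. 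Symmetry of the transition probabilities forces $\P(X_\infty = \pi(i^\infty)) = 1/3$ for each $i \in \Sigma$, and since $X_\infty \in \M_{\text{min}}$ almost surely (by the cited general theorem), we conclude $\{\pi(i^\infty) \colon i \in \Sigma\} \subseteq \M_{\text{min}}$.

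For the reverse inclusion I would establish the identity
\begin{equation*}
K(x, \pi(\mbf{y})) = \sum_{i \in \Sigma} \rho_i(\mbf{y}) \, K(x, \pi(i^\infty)), \qquad x \in \Sigma^*,\; \mbf{y} \in \Sinf,
\end{equation*}
by inserting the limits from Proposition~\ref{prop_limit_Green_function} into \eqref{eq:martin_eq}: the denominator collapses to $5c/3$ via $\sum_i \rho_i(\sigma(\mbf{y})) = 1$, and the numerator regroups---using the matrix recursion $\bsr(\mbf{y}) = \bsr(\sigma(\mbf{y})) A^{(\omega_1)}$ from Section~\ref{Random_matrix_product} and the identification $K(x, \pi(i^\infty)) = 3 \P_x(J_\infty = i)$ from the first paragraph---into the claimed sum. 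If $\bsr(\mbf{y}) = \boldsymbol{e}_j$ for some $j$, the identity yields $K(\cdot, \pi(\mbf{y})) = K(\cdot, \pi(j^\infty))$, hence $\mbf{y} \aq{\pi} j^\infty$, so $\mbf{y} = j^\infty$ (as $j^\infty$ is $\pi$-equivalent only to itself). Consequently, for every $\mbf{y} \neq i^\infty$ at least two entries of $\bsr(\mbf{y})$ are strictly positive. The three kernels $K(\cdot, \pi(i^\infty))$ are linearly independent---evaluation at the states $j \in \Sigma^1$ yields a $3 \times 3$ matrix with common diagonal entries $3 \P_j(J_\infty = j)$ and common off-diagonal entries $3 \P_j(J_\infty = k)$, and the strict bias $\P_j(J_\infty = j) > \P_j(J_\infty = k)$ makes this matrix invertible---so the identity exhibits $K(\cdot, \pi(\mbf{y}))$ as a genuine convex combination of distinct normalized non-negative harmonic functions. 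Hence $K(\cdot, \pi(\mbf{y}))$ is not minimal, and $\pi(\mbf{y}) \notin \M_{\text{min}}$.

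Combining the two inclusions gives $\M_{\text{min}} = \{\pi(i^\infty) \colon i \in \Sigma\}$; this is a three-point subset of $\M$ canonically identified with $\{1^\infty, 2^\infty, 3^\infty\} \subset \Sinf$ (each $i^\infty$ is $\pi$-equivalent only to itself), and both sides carry the discrete topology, so they are homeomorphic. The main obstacle is the convex decomposition: Proposition~\ref{prop_limit_Green_function} is phrased with an asymmetric role for the distinguished initial letter of $\mbf{y}$, so reorganizing the numerator of \eqref{eq:martin_eq} into the symmetric form $\sum_i \rho_i(\mbf{y}) K(\cdot, \pi(i^\infty))$ requires careful bookkeeping. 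Everything else is a direct application of Doob's theorem, symmetry, and the identification $\M \cong \mathcal{K}$.
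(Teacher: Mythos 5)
Your argument is correct, and for the only direction the paper makes explicit it is the same argument: the paper derives the inclusion $\{1^{\infty},2^{\infty},3^{\infty}\}\subseteq\M_{\text{min}}$ from almost-sure convergence of the chain to the minimal Martin boundary, which, combined with the observation that the chain exits level $n$ through a single vertex $J_n^n\in V^n$ and that $(J_n)$ must stabilise (otherwise $(X_{\tau_n})$ would have subsequences converging to the distinct boundary points $i^{\infty}\neq j^{\infty}$), shows that each corner carries harmonic measure $1/3>0$ and hence is minimal. What the paper leaves entirely implicit --- and what is genuinely needed, since almost-sure convergence to a subset of the boundary does not by itself exclude further minimal points (a drifting walk on $\mathbb{Z}$ has two minimal boundary points but its exit measure charges only one) --- is the reverse inclusion, and your convex decomposition supplies it. The bookkeeping you worry about does close up: Proposition~\ref{prop_limit_Green_function} together with $\bsr(\mbf y)=\bsr(\sigma(\mbf y))A^{(i)}$ and the explicit matrices in \eqref{def_matrices} gives $\lim_{n\to\infty}G(j^{n-1},\mbf y|_n)=\rho_j(\mbf y)/(3p)$ for \emph{every} $j\in\Sigma$, so \eqref{eq:martin_eq} collapses to $K(\cdot,\mbf y)=\sum_{i}\rho_i(\mbf y)\,K(\cdot,i^{\infty})$ with $\sum_i\rho_i(\mbf y)=1$; and since $\bsr(\mbf y)=\boldsymbol{e}_j=\bsr(j^{\infty})$ forces $\mbf y\aq{\pi}j^{\infty}$, hence $\mbf y=j^{\infty}$, every non-corner boundary point is a genuine mixture of at least two of the $K(\cdot,i^{\infty})$ and so is not minimal. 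This is in substance the route of Lau and Ngai, on which the paper tacitly relies; your write-up makes it explicit in the non-isotropic setting. (One cosmetic point: $X_{\tau_n}=J_n^n$ is an interior point of the Martin space, so it does not literally ``correspond to $q_{J_n}$'' under the homeomorphism $\M\cong\mathcal{K}$; the stabilisation of $J_n$ should be argued via subsequential limits as above.)

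The one soft spot is your justification that the three kernels $K(\cdot,i^{\infty})$ are distinct: the strict bias $\P_j(J_\infty=j)>\P_j(J_\infty=k)$ is asserted, not proved. It is true, but you do not need it, since non-minimality only requires pairwise distinctness (not linear independence) of the $K(\cdot,i^{\infty})$, and this is already available: $1^{\infty},2^{\infty},3^{\infty}$ are pairwise non-$\pi$-equivalent, so $\varrho(i^{\infty},j^{\infty})>0$ by the separation property underlying Theorem~\ref{thm_M_homeo_K}, and by the definition \eqref{eq_def_Martin_metric} this means precisely that $K(z,i^{\infty})\neq K(z,j^{\infty})$ for some $z\in\Sigma^*$. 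With that substitution your proof is complete.
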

	
	A non-negative $ P $-harmonic function $ u $ has a unique integral representation over the minimal Martin boundary, namely, there exists a measure $\nu$ supported on $\M_{\textrm{min}}$ such that 
	\begin{align*}
	u(x) = \int K(x, \xi) \dx\nu(\xi).
	\end{align*}
	This together with Theorem~\ref{thm_min_Martin_boundary} implies that each non-negative $ P $-harmonic function is a linear combination of the $ P $-harmonic functions $ h_i(x) \coloneqq K(x, i^{\infty}) $, where $ i \in \Sigma $.
	The extension $ h_i(\mbf x) \coloneqq \lim_{n \to \infty} h_i(\mbf x|_n) $ is well defined and continuous on $ \M $. 
	The proof of this result follows in the same manner as in \cite{Lau}.
	
	\definecolor{ffzzqq}{rgb}{1,0.6,0}
	\begin{figure}
		\begin{center}
			\resizebox{12.5cm}{!}{
				\begin{tikzpicture}[line cap=round,line join=round,>=triangle 45,x=0.8cm,y=0.8cm]
				\clip(-2.5,-1) rectangle (14.5,4.5);
				\draw [line width=1.2pt] (-2,0)-- (2,0);
				\draw [line width=1.2pt] (2,0)-- (0,3.46);
				\draw [line width=1.2pt] (0,3.46)-- (-2,0);
				\draw [line width=1.2pt] (4,0)-- (8,0);
				\draw [line width=1.2pt] (8,0)-- (6,3.46);
				\draw [line width=1.2pt] (6,3.46)-- (4,0);
				\draw [line width=1.2pt] (10,0)-- (14,0);
				\draw [line width=1.2pt] (14,0)-- (12,3.46);
				\draw [line width=1.2pt] (12,3.46)-- (10,0);
				\draw [line width=1.2pt] (5,1.73)-- (6,0);
				\draw [line width=1.2pt] (6,0)-- (7,1.73);
				\draw [line width=1.2pt] (7,1.73)-- (5,1.73);
				\draw [line width=1.2pt] (11,1.73)-- (12,0);
				\draw [line width=1.2pt] (12,0)-- (13,1.73);
				\draw [line width=1.2pt] (13,1.73)-- (11,1.73);
				\draw [line width=1.2pt] (11.5,2.6)-- (12,1.73);
				\draw [line width=1.2pt] (12,1.73)-- (12.5,2.6);
				\draw [line width=1.2pt] (12.5,2.6)-- (11.5,2.6);
				\draw [line width=1.2pt] (10.5,0.87)-- (11,0);
				\draw [line width=1.2pt] (11.5,0.87)-- (10.5,0.87);
				\draw [line width=1.2pt] (11.5,0.87)-- (11,0);
				\draw [line width=1.2pt] (12.5,0.87)-- (13,0);
				\draw [line width=1.2pt] (13,0)-- (13.5,0.87);
				\draw [line width=1.2pt] (13.5,0.87)-- (12.5,0.87);
				\draw (-0.25,-0.4) node[anchor=north west] {$G_0$};
				\draw (5.75,-0.4) node[anchor=north west] {$G_1$};
				\draw (11.75,-0.4) node[anchor=north west] {$ G_2 $};
				\begin{scriptsize}
				\fill [color=black] (-2,0) circle (2.5pt);
				\fill [color=black] (2,0) circle (2.5pt);
				\fill [color=black] (0,3.46) circle (2.5pt);
				\fill [color=black] (4,0) circle (2.5pt);
				\fill [color=black] (8,0) circle (2.5pt);
				\fill [color=black] (10,0) circle (2.5pt);
				\fill [color=black] (14,0) circle (2.5pt);
				\fill [color=black] (6,3.46) circle (2.5pt);
				\fill [color=black] (12,3.46) circle (2.5pt);
				\fill [color=black] (5,1.73) circle (2.5pt);
				\fill [color=black] (7,1.73) circle (2.5pt);
				\fill [color=black] (6,0) circle (2.5pt);
				\fill [color=black] (11,1.73) circle (2.5pt);
				\fill [color=black] (13,1.73) circle (2.5pt);
				\fill [color=black] (12,0) circle (2.5pt);
				\fill [color=black] (11.5,2.6) circle (2.5pt);
				\fill [color=black] (10.5,0.87) circle (2.5pt);
				\fill [color=black] (11,0) circle (2.5pt);
				\fill [color=black] (13,0) circle (2.5pt);
				\fill [color=black] (13.5,0.87) circle (2.5pt);
				\fill [color=black] (12.5,2.6) circle (2.5pt);
				\fill [color=black] (12,1.73) circle (2.5pt);
				\fill [color=black] (12.5,0.87) circle (2.5pt);
				\fill [color=black] (11.5,0.87) circle (2.5pt);
				\end{scriptsize}
				\end{tikzpicture}
			}
			\caption{ Graph approximations $G_{n}$ of the Sierpi\'nki gasket, for $n = 0, 1$ and $2$.} \label{Fig_Graphen_Strichartz}
		\end{center}
	\end{figure}
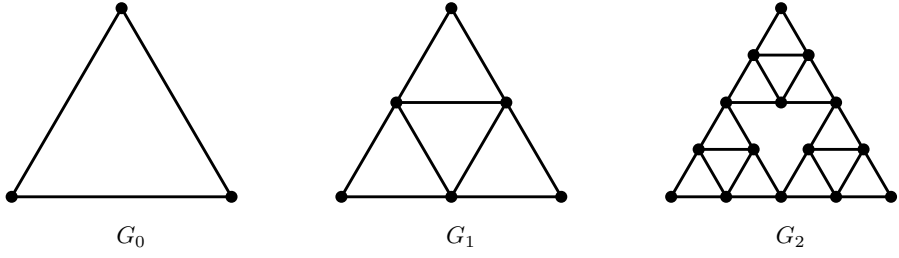
	As shown in \cite{Kig,Str}, the Sierpi\'nki gasket can be approximated by the graphs $ G_n $, $ n \in \N_0 $, see Figure~\ref{Fig_Graphen_Strichartz}. The vertices of these graphs are $ S_{\omega}(q_i) $ for $ i \in \Sigma $ and $ \omega \in \Sigma^n $.  In \cite{Kig,Str},  it was shown that the value $ h(x) $ of a harmonic function $ h $ on the Sierpi\'nki gasket $\mathcal{K}$ at $ x \in G_n $ is a weighted sum over the values $ h(S_{\omega}(q_i)) $, $ i \in \Sigma $, for an appropriate $ \omega \in \Sigma^{n - 1} $.  Specifically, $\omega$ is so that $ S_{\omega}(\mathcal{K}) $ is the unique $ (n - 1) $-cell containing $ x $. This is the so called \textsl{$ 1/5 $--$2/5 $-rule}. Note that $ \pi(\omega j^{\infty}) = S_{\omega}(q_j) $ and consider $ \omega j^{\infty} $ as corresponding vertex to $ S_{\omega}(q_j) $ in $ \Sinf $. We have the following equivalent statement of the $ 1/5 $--$2/5 $-rule. 
	For $ j, k, l \in \Sigma $ pairwise distinct, $ \omega \in \Sigma^{m - 1} $ and $ i \in \Sigma $,
	\begin{align} \label{prop_1_5_2_5_rule}
	h_i(\omega j k^{\infty}) = \frac{2}{5} \, h_i(\omega j^{\infty}) + \frac{2}{5} \, h_i(\omega k^{\infty}) + \frac{1}{5} \, h_i(\omega l^{\infty}).
	\end{align}
	
	This can be shown as in \cite{Lau}. In fact, the weights in \eqref{prop_1_5_2_5_rule} come from the limits of the sequences of hitting probabilities, namely that $ \lim_{n \to \infty} \alpha_n = \lim_{n \to \infty} \beta_n = 2/5 $ and $ \lim_{n \to \infty} \gamma_n = 1/5 $, see Theorem~\ref{prop_limits}.  The equality given in \eqref{prop_1_5_2_5_rule} is equivalent to the property that the value of a harmonic function $ h $ of an inner vertex of $ G_n $ is the average over the values $ h(y) $ for all four neighbouring vertices $ y $ of $ x $ in $ G_n $. This harmonic graph property uniquely determines harmonic functions on the Sierpi\'nski gasket, see for instance \cite{Kig,Str}. 
	
	\begin{theorem}\label{thm:harmonic_functions}
		The $ P $-harmonic functions on the Martin boundary coincide with the canonical harmonic functions of \cite{Kig,Str}, and hence the space of $P$-harmonic functions on $\mathcal{K}$ is \mbox{three-dimensional}. 
	\end{theorem}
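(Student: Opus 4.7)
The plan is to combine the Poisson--Martin integral representation with the explicit description of the minimal Martin boundary from Theorem~\ref{thm_min_Martin_boundary}. First I would recall that every non-negative $P$-harmonic function $u$ admits a representation $u(x) = \int_{\M_{\textrm{min}}} K(x,\xi)\,\dx\nu(\xi)$ for some measure $\nu$ supported on $\M_{\textrm{min}}$. Since $\M_{\textrm{min}}$ consists of only the three points $\{1^{\infty}, 2^{\infty}, 3^{\infty}\}$, this integral collapses to a finite non-negative linear combination $u = \sum_{i \in \Sigma} c_{i} h_{i}$, where $h_{i}(x) = K(x, i^{\infty})$. A signed $P$-harmonic function can be split into its positive and negative parts via the Jordan/Riesz decomposition valid in the transient setting, so the space of $P$-harmonic functions on $\Sigma^{*}$ is spanned by $h_{1}, h_{2}, h_{3}$. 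Linear independence follows either from the general fact that distinct minimal harmonic functions are linearly independent, or by direct evaluation of the continuous extensions $h_{i}(j^{\infty}) = \delta_{ij}$ at the three corner states.

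Next I would identify this three-dimensional span with Kigami's canonical space of harmonic functions on $\mathcal{K}$ using the $1/5$--$2/5$ rule stated in~\eqref{prop_1_5_2_5_rule}. Under the homeomorphism $\M \cong \mathcal{K}$ from Theorem~\ref{thm_M_homeo_K} together with the correspondence $\omega j^{\infty} \leftrightarrow S_{\omega}(q_{j})$, the identity~\eqref{prop_1_5_2_5_rule} is exactly the harmonic averaging rule that Kigami and Strichartz use to define the canonical harmonic extension on the graph approximations $G_{n}$ of $\mathcal{K}$. Since a canonical harmonic function on $\mathcal{K}$ is uniquely determined by its three boundary values $h(q_{1}), h(q_{2}), h(q_{3})$, and since we have produced three functions $h_{1}, h_{2}, h_{3}$ which are linearly independent and satisfy the same extension rule, each $h_{i}$ must coincide with the canonical harmonic function taking the value $\delta_{ij}$ at $q_{j}$. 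This simultaneously proves that the $P$-harmonic space is three-dimensional and that it equals the canonical harmonic space.

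The main obstacle is verifying~\eqref{prop_1_5_2_5_rule} in our non-isotropic setting, which the author defers to the methods of \cite{Lau}. Concretely, one must trace back through the random-matrix-product machinery of Section~\ref{Random_matrix_product}: using Proposition~\ref{prop_K(x,.)ex_ist_stetig} to define $h_{i}(\omega j k^{\infty})$ as a limit of Martin kernel values, and then using Proposition~\ref{prop_limit_Green_function} together with Theorem~\ref{prop_limits} (which supplies the limiting values $2/5, 2/5, 1/5$) to evaluate this limit. The arithmetic has been aligned by the author in the preceding sections, so the remaining work is to assemble the ratios carefully and confirm that the convex coefficients $(2/5, 2/5, 1/5)$ emerge as weights of $h_{i}$ at $\omega j^{\infty}, \omega k^{\infty}, \omega l^{\infty}$. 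Once this averaging identity is in hand, the matching with canonical harmonic functions, and hence the three-dimensionality, is immediate.
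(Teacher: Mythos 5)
Your proposal is correct and follows essentially the same route as the paper: the Poisson--Martin integral representation over the three-point minimal Martin boundary from Theorem~\ref{thm_min_Martin_boundary} reduces everything to the span of $h_1,h_2,h_3$, and the $1/5$--$2/5$ rule \eqref{prop_1_5_2_5_rule} (whose weights come from Theorem~\ref{prop_limits}, verified by the methods of \cite{Lau}) identifies these with Kigami's canonical harmonic functions via the graph averaging characterisation. Your explicit remarks on the Jordan decomposition for signed harmonic functions and on linear independence via $h_i(j^\infty)=\delta_{ij}$ are details the paper leaves implicit, but they do not change the argument.
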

	
	Notice that the harmonic functions, although they do not vary with the parameter $ p $ on the Martin boundary, for different values of $ p $, they certainly differ on the state space $ \Sigma_* $. 
	
	\section{Basic hitting probabilities} \label{sec_hitt_prob}
	
	In this section we prove Theorem~\ref{prop_limits}. Indeed, this result follows from Propositions~\ref{prop_limit_b_n} and~\ref{limit_alpha_beta_gamma}. This requires several technical lemmata and the following recursive formulas for the hitting probabilities $ \alpha_n, \beta_n, \gamma_n $ and $ a_n, b_n, c_n $. In the sequel, as above, let $ p \in (0,1/2) $ be fixed and recall that $q \coloneqq 1 - 2p$. 
	
	By \eqref{thm_cut_point} and by definition of $ a_2 $, $ b_2 $ and $ c_2 $, we have that $ a_2 = p + p \, b_2 + q \, a_2 $, $ b_2 = p \, a_2 + q \, c_2 $ and $ c_2 = p \, c_2 + q \, b_2 $.  Solving these linear equations yields
	\begin{align} \label{eq_startwerte}
	a_2 = \frac{3 - 4p}{5 - 7p} = \alpha_2, \hspace{1em} 
	b_2 = \frac{1 - p}{5 - 7p}= \beta_2, \hspace{1em}
	c_2 = \frac{1 - 2p}{5 - 7 p} = \gamma_2.
	\end{align}
	Similarly, for $ n > 2 $, one may use symmetry to obtain 
	\begin{align*}
	\alpha_{n + 1} = p \; \beta_{n + 1} + p \; \rho_{12^{n - 1}1, 1^{n + 1}} + q \; \rho_{12^{n - 1}3, 1^{n + 1}}.
	\end{align*}
	This in combination with \eqref{thm_cut_set}, where $ A = \{1^{n+1}, 12^n, 13^n \} $, yields that
	\begin{align*}
	\alpha_{n + 1} = p \; \beta_{n + 1} + p \; ( a_n \alpha_{n + 1} + c_n \alpha_{n + 1} + b_n ) 
	+ q \; ( a_n \alpha_{n + 1} + b_n \alpha_{n + 1} + c_n ).
	\end{align*}
	\noindent With similar arguments for $ \beta_n, \gamma_n $ and $ a_n, b_n, c_n $ it follows that
	\begin{align}
	&\begin{aligned} \label{dependencies}
	&\alpha_{n + 1} ( 1 - a_n (1 - 2p) - b_n (1 - 3p) - p ) = p \beta_{n + 1} + p b_n + c_n (1 - 2p),\\
	&\beta_{n + 1} ( 1 - a_n (1 - p) ) = p \alpha_{n + 1} + \gamma_{n + 1} ( p c_n + b_n (1 - 2p) ),\\ 
	&\gamma_{n + 1} (1 - p) (1 - a_n) = \beta_{n + 1} \left( p c_n + b_n (1 - 2p) \right),\\
	&a_{n + 1} = a_n + b_n \alpha_{n + 1} + c_n \alpha_{n + 1},\\
	&b_{n + 1} = b_n \beta_{n + 1} + c_n \gamma_{n + 1},\\
	&c_{n + 1} = b_n \gamma_{n + 1} + c_n \beta_{n + 1}.
	\end{aligned}
	\intertext{Rearranging these equations we obtain the following recursive formulas}
	&\begin{aligned} \label{rec_formulas}
	&\alpha_{n + 1} = ( (b_n + c_n) (1 - p) p + c_n^2 (1 - 2 p) + b_n^2 p (2 - 3 p) + b_n c_n (2 - 6 p(1 - p)) ) / d_n,\\
	&\beta_{n + 1} = (b_n + c_n) (1 - p) p / d_n,\\
	&\gamma_{n + 1} = p (b_n (1 - 2 p) + c_n p) / d_n,\\
	&a_{n + 1} = ( c_n (2 - p) p + c_n^2 (1 - 3 p) + b_n p (3 - 4 p) + b_n c_n (2 - 9 p + 9 p^2 ) ) / d_n,\\
	&b_{n + 1} = p  (b_n c_n (2 - 3 p) + b_n^2 (1 - p) + c_n^2 p  ) / d_n,\\
	&c_{n + 1} = p  (b_n c_n + c_n^2 (1 - p) + b_n^2 (1 - 2 p)  ) / d_n,
	\end{aligned}
	\intertext{where the denominator $ d_n $ is given by}
	&d_n \coloneqq {c_n (2 - p) p + c_n^2 (1 - 2 p) + b_n^2 p (2 - 3 p) + b_n p (3 - 4 p) + b_n c_n (2 - 6 p + 6 p^2 )}. \notag
	\end{align}
	
	Next we identify the limits of these sequences. 
	
	\begin{lemma} \label{lemma_b_n_groesser_c_n}
		$ b_n \geq c_n $ for all $ n \in \N $ with $ n \geq 2 $.
	\end{lemma}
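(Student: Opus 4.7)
The plan is to proceed by induction on $n \geq 2$. For the base case, the explicit formulas in \eqref{eq_startwerte} give $b_2 - c_2 = p/(5-7p)$, which is strictly positive for $p \in (0,1/2)$ since the denominator exceeds $5 - 7/2 = 3/2$.

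For the inductive step, assume $b_n \geq c_n \geq 0$ and subtract the recursive expressions for $b_{n+1}$ and $c_{n+1}$ in \eqref{rec_formulas}. After cancelling common terms one obtains
\[
b_{n+1} - c_{n+1} = \frac{p}{d_n}\bigl[\,p\,b_n^2 + (1-3p)\,b_n c_n - (1-2p)\,c_n^2\,\bigr].
\]
Viewed as a quadratic in $b_n$, the bracketed polynomial has discriminant $(1-3p)^2 + 4p(1-2p) = (1-p)^2$, so it factors cleanly as
\[
p\,b_n^2 + (1-3p)\,b_n c_n - (1-2p)\,c_n^2 \;=\; p\,(b_n - c_n)\Bigl(b_n - \tfrac{2p-1}{p}\,c_n\Bigr).
\]
The first factor is non-negative by the induction hypothesis. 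Since $p \in (0,1/2)$ the ratio $(2p-1)/p$ is negative, so the second factor equals $b_n + \frac{1-2p}{p}c_n \geq 0$. Hence the numerator is non-negative, and we obtain $b_{n+1} \geq c_{n+1}$ provided $d_n > 0$.

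Positivity of $d_n$ follows by inspection of its defining formula: every summand is non-negative when $p \in (0,1/2)$ (note $2-3p > 0$, $3-4p > 0$, and $2 - 6p + 6p^2 = 2 - 6p(1-p) \geq 1/2$), and the term $b_n\,p(3-4p)$ is strictly positive because $b_n > 0$, which itself is maintained along the induction by the same recursion starting from $b_2 > 0$. There is no real obstacle here — the only step requiring a moment of insight is noticing the factorisation $(1-3p)^2 + 4p(1-2p) = (1-p)^2$, which makes the bracketed quadratic split into linear factors whose signs are then read off directly from the hypothesis $b_n \geq c_n$ and the constraint $p < 1/2$.
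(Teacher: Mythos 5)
Your proof is correct and follows essentially the same route as the paper: induction via the recursion \eqref{rec_formulas}, reducing to the non-negativity of the same quadratic form $p\,b_n^2 + (1-3p)\,b_n c_n - (1-2p)\,c_n^2$. The only difference is cosmetic — the paper verifies this inequality by splitting $(1-3p) = (1-2p) - p$ and estimating term by term using $b_n \geq c_n$, whereas you factor it exactly as $p\,(b_n - c_n)\bigl(b_n + \tfrac{1-2p}{p}c_n\bigr)$, which is a slightly cleaner finish (and your explicit check that $d_n > 0$ is a detail the paper leaves implicit).
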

	
	\begin{proof}
		With \eqref{eq_startwerte} we have $ b_2 \geq c_2 $. Assume $ b_n \geq c_n $ for a $ n \geq 2 $. 
		By \eqref{rec_formulas}, $ b_{n + 1} \geq c_{n + 1} $ is  equivalent to
		\begin{align*}
		b_n c_n (2 - 3p) + b_n^2 (1 - p) + c_n^2 p 
		&\geq  b_n c_n + c_n^2 (1 - p) + b_n^2 (1 - 2p).
		\end{align*}
		Thus, it is sufficient to show that $ b_n^2 p + c_n^2 (2p - 1) + b_n c_n (1 - 3 p ) \geq 0 $.
		Using the assumption that $ b_n \geq c_n $ and the fact that $ 1 - 2p \geq 0 $, we obtain
		\begin{align*}
		b_n^2 p + c_n^2 (2p - 1) + b_n c_n (1 - 3 p) 
		&= b_n^2 p + c_n^2 (2p - 1) + b_n c_n (1 - 2 p) - b_n c_n p \\
		&\geq b_n^2 p + c_n^2 (2p - 1) + c_n^2 (1 - 2 p) - b_n^2 p = 0.
		\qedhere
		\end{align*}
	\end{proof}
	
	\begin{lemma} \label{lemma_b_n_decr}
		The sequence $ (b_n)_{n \geq 2} $ is monotonically decreasing.
	\end{lemma}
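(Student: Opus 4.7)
The plan is to combine the recursion $b_{n+1} = b_n \beta_{n+1} + c_n \gamma_{n+1}$ from \eqref{dependencies} with the inequality $c_n \le b_n$ from Lemma~\ref{lemma_b_n_groesser_c_n}, and with the total probability identity $\alpha_{n+1}+\beta_{n+1}+\gamma_{n+1}=1$. Once these three ingredients are in place, monotonicity falls out by a single estimate, so the main challenge is really just assembling the right pieces rather than any delicate analysis.

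First I would note that the chain restricted to level $n+1$ (prior to passing to level $n+2$) is a finite Markov chain whose only absorbing states are the three boundary vertices $V^{n+1}=\{1^{n+1},2^{n+1},3^{n+1}\}$, and from any transient state these are reached almost surely. Applied to the starting state $12^{n}\in\tilde{\Sigma}^{n+1}$, this yields
\begin{equation*}
\alpha_{n+1}+\beta_{n+1}+\gamma_{n+1}=\rho_{1}(12^{n})+\rho_{2}(12^{n})+\rho_{3}(12^{n})=1,
\end{equation*}
and in particular $\beta_{n+1}+\gamma_{n+1}=1-\alpha_{n+1}\le 1$.

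Next I would invoke Lemma~\ref{lemma_b_n_groesser_c_n}, which gives $c_n\le b_n$ for $n\ge 2$, so that the recursion from \eqref{dependencies} can be bounded termwise:
\begin{equation*}
b_{n+1}=b_n\beta_{n+1}+c_n\gamma_{n+1}\le b_n\beta_{n+1}+b_n\gamma_{n+1}=b_n(\beta_{n+1}+\gamma_{n+1})\le b_n.
\end{equation*}
This is exactly the desired inequality $b_{n+1}\le b_n$, completing the monotonicity statement. I do not expect any serious obstacle: the only point that could conceivably require extra care is the identity $\alpha_{n+1}+\beta_{n+1}+\gamma_{n+1}=1$, but this is immediate from the absorption behaviour of the finite-level chain and is implicit throughout Section~\ref{Random_matrix_product}.
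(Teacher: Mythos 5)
Your proof is correct, but it is not the route the paper takes. The paper works directly with the closed-form rational recursions in \eqref{rec_formulas}: it writes out $b_n \geq b_{n+1}$ as an explicit polynomial inequality in $b_n$, $c_n$ and $p$, reduces it to the non-negativity of the expression \eqref{label}, and verifies that term by term using Lemma~\ref{lemma_b_n_groesser_c_n} and the constraint $p \in (0,1/2)$. You instead stay at the level of the probabilistic identities in \eqref{dependencies} and observe that $b_{n+1} = b_n\beta_{n+1} + c_n\gamma_{n+1} \leq b_n(\beta_{n+1}+\gamma_{n+1}) \leq b_n$, using $c_n \leq b_n$ and the fact that $\beta_{n+1}+\gamma_{n+1} \leq 1$. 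Both ingredients are unimpeachable: Lemma~\ref{lemma_b_n_groesser_c_n} is proved before this statement, and $\alpha_n+\beta_n+\gamma_n=1$ (or even just the weaker disjointness bound $\beta_{n+1}+\gamma_{n+1}\leq 1$, since the chain can be absorbed by at most one vertex of $V^{n+1}$) is used without comment elsewhere in the paper, e.g.\ in the proofs of Propositions~\ref{prop_limit_b_n} and~\ref{limit_alpha_beta_gamma}. Your argument is shorter and more conceptual; what the paper's computation buys is nothing extra here, though it is of a piece with the algebraic style of the surrounding lemmata. It is worth noting that the paper's own proof of Proposition~\ref{prop_limit_b_n} contains a strict quantitative strengthening of your estimate ($b_{n+1}\leq (4/5)\,b_n$ for $p\leq 1/3$, resp.\ $(3/5)\,b_n$ for $p>1/3$), so in effect you have anticipated the mechanism used two results later.
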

	
	\begin{proof}
		By \eqref{rec_formulas}, for $ n \geq 2 $, it is sufficient to show
		\begin{align*}
		b_n c_n p (2 - p) + b_n c_n^2 (1 - 2 p)&+ b_n^3 p (2 - 3 p) + b_n^2 p (3 - 4 p) + b_n^2 c_n (2 - 6 p + 6 p^2 )\\
		&\geq b_n c_n p (2 - 3p) + b_n^2 p (1 - p) + c_n^2 p^2 ,
		\end{align*}
		which is equivalent to 
		\begin{align} \label{label}
		2 b_n c_n p^2 \!- c_n^2 p^2 + b_n c_n^2 (1 - 2p) + b_n^2 p (b_n + 1)(2 - 3p) + b_n^2 c_n (2 - 6 p + 6 p^2 )
		\end{align}
		being non-negative.  Lemma~\ref{lemma_b_n_groesser_c_n} implies $ 2 b_n c_n p^2 - c_n^2 p^2 \geq 2 c_n^2 p^2 - c_n^2 p^2 = c_n^2 p^2 \geq 0 $, and since $ p \in (0, 1/2) $, it follows that $ (1 - 2p) \geq 0 $ and $ (2 - 3p) \geq 0 $. Further, $ f(p) \coloneqq 2 - 6 p + 6 p^2 > 0 $ attains its minimum at $ p = 1/2 $ and $ f(1/2) = 1/2 $. Thus, \eqref{label} is valid.
	\end{proof}
	
	We aim to show that $ \lim_{n \to \infty} b_n = 0 $. For this, we first obtain an upper bound for the sequence $ (\beta_n)_{n \geq 2} $ and a lower bound for $ (\alpha_n)_{n \geq 2} $.
	
	\begin{lemma} \label{beta_kleiner_2_5}
		For $ p \leq 1/3 $ and $n \in \N$ with $ n \geq 2 $, we have $ \beta_n \leq 2/5 $.
	\end{lemma}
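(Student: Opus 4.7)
The plan is to argue by induction on $n$, using the recursive formula $\beta_{n+1} = (b_n + c_n)(1-p)p/d_n$ from \eqref{rec_formulas} together with Lemma~\ref{lemma_b_n_groesser_c_n} ($b_n \geq c_n$). The base case $n=2$ follows directly from $\beta_2 = (1-p)/(5-7p)$ in \eqref{eq_startwerte}; a quick computation shows $\beta_2$ is monotonically increasing in $p$, and at $p=1/3$ it takes the value $1/4<2/5$, so the bound holds on the whole interval $p\in(0,1/3]$.

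For the inductive step, assuming the statement is known at level $n$ (though in fact the argument does not use the inductive hypothesis on $\beta_n$ itself, only $b_n\ge c_n$), I would rewrite the desired inequality $\beta_{n+1}\le 2/5$ as
\begin{equation*}
2 d_n - 5(b_n+c_n)(1-p)p \geq 0.
\end{equation*}
Expanding using the explicit form of $d_n$ and collecting terms, the left-hand side simplifies to
\begin{equation*}
(b_n - c_n)\,p(1-3p) + 2 c_n^2 (1-2p) + 2 b_n^2 p(2-3p) + 2 b_n c_n (2 - 6p + 6p^2).
\end{equation*}
Now each of these four summands is manifestly non-negative under our standing assumptions: the first because $b_n\geq c_n$ by Lemma~\ref{lemma_b_n_groesser_c_n} and $p\leq 1/3$; the second because $p\leq 1/2$; the third because $p\leq 1/2<2/3$; and the fourth because the quadratic $2-6p+6p^2$ has negative discriminant (its minimum on $\R$ equals $1/2>0$). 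This yields $\beta_{n+1}\leq 2/5$.

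There is no serious obstacle here — the result is an algebraic identity plus sign bookkeeping. The one subtlety worth flagging is the precise role of the hypothesis $p\leq 1/3$: it is required solely to make the coefficient $(1-3p)$ non-negative in the first summand, which is exactly where the comparison $b_n\ge c_n$ gets leveraged. For $p\in(1/3,1/2)$ this summand becomes negative and the inequality $\beta_n\leq 2/5$ cannot be obtained from this elementary estimate; that is presumably why the lemma is stated only for $p\leq 1/3$, and why the sharper arguments of Proposition~\ref{limit_alpha_beta_gamma} are needed to handle the full parameter range.
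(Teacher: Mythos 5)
Your proof is correct and follows essentially the same route as the paper: clear the denominator to reduce $\beta_{n+1}\le 2/5$ to the non-negativity of $2d_n-5(b_n+c_n)(1-p)p$, expand, and observe that the only potentially negative contribution is absorbed by the term $(b_n-c_n)p(1-3p)\ge 0$ via Lemma~\ref{lemma_b_n_groesser_c_n} and $p\le 1/3$ (the paper writes this same cancellation as $c_np(3p-1)+b_np(1-3p)\ge 0$). The only cosmetic differences are your monotonicity argument for the base case (the paper just checks $\beta_2\le 2/5\iff p\le 5/9$) and your framing as an induction, which, as you yourself note, is not actually needed.
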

	
	\begin{proof}
		We have $ \beta_2 = (1 - p)/(5 - 7p) \leq 2/5 $ if and only if $ p \leq 5/9 $, which holds since $ p \leq 1/3 $.
		Let $ n \in \N $ with $ n > 2 $. We claim $ \beta_{n + 1} \leq 2/5 $, which is equivalent to 
		\begin{align*}
		\frac{(b_n + c_n) (1 - p) p}{c_n (2 - p) p + c_n^2 (1 - 2 p) + b_n^2 p (2 - 3 p) + b_n p (3 - 4 p) + b_n c_n (2 - 6 p + 6 p^2)} \leq \frac{2}{5}.
		\end{align*}
		This in turn is equivalent to 
		\begin{align*}
		c_n p (3p - 1) + c_n^2 2(1 - 2 p) + b_n^2 2 p (2 - 3 p) + b_n p (1 - 3p) + b_n c_n 2 (2 - 6 p + 6 p^2 ) \!\geq\! 0. 
		\end{align*} 
		Since $ p \leq 1/3 $, the terms on the left hand side are all positive except for $ c_n \, p \, (3p - 1) $. Combining this with Lemma~\ref{lemma_b_n_groesser_c_n}, we have $ c_n \, p \, (3p - 1) + b_n \, p \, (1 - 3p) \geq 0 $, yielding the result. 
	\end{proof}
	
	\begin{lemma} \label{alpha_gr_2_5}
		For $ p \geq 1/3 $ and all $ n \geq 2 $ we have $ \alpha_n \geq 2/5 $.
	\end{lemma}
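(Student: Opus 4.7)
The plan is to handle $n=2$ separately and then derive $\alpha_n \geq 2/5$ for $n \geq 3$ directly from the recursion~\eqref{rec_formulas}, using $b_{n-1} \geq c_{n-1}$ (Lemma~\ref{lemma_b_n_groesser_c_n}) as the only non-trivial input. For $n = 2$, the explicit value $\alpha_2 = (3-4p)/(5-7p)$ from~\eqref{eq_startwerte} reduces the claim, upon clearing the positive denominator, to $5 \geq 6p$, which holds throughout $(0,1/2)$.

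For $n \geq 3$, I will apply the recursion~\eqref{rec_formulas} with index $n-1$ to write $\alpha_n = N_{n-1}/d_{n-1}$, where
\[ N_{n-1} = (b_{n-1}+c_{n-1})(1-p)p + c_{n-1}^2(1-2p) + b_{n-1}^2 p(2-3p) + b_{n-1}c_{n-1}(2 - 6p + 6p^2) \]
and $d_{n-1}$ is as defined in Section~\ref{sec_hitt_prob}. Since $d_{n-1} > 0$, the target $\alpha_n \geq 2/5$ becomes the polynomial inequality $5 N_{n-1} - 2 d_{n-1} \geq 0$ in $b_{n-1}, c_{n-1}, p$.

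The core algebraic step is to collect coefficients by monomial in $b_{n-1}, c_{n-1}$. A direct expansion gives
\[ 5 N_{n-1} - 2 d_{n-1} = p(3p-1)(b_{n-1}-c_{n-1}) + 3(1-2p)c_{n-1}^2 + 3p(2-3p)b_{n-1}^2 + 3(2-6p+6p^2)b_{n-1}c_{n-1}. \]
The essential cancellation is that the linear terms combine as $p(3p-1)b_{n-1} + p(1-3p)c_{n-1} = p(3p-1)(b_{n-1}-c_{n-1})$; this is exactly where the hypothesis $p \geq 1/3$ is used, together with $b_{n-1} \geq c_{n-1}$ from Lemma~\ref{lemma_b_n_groesser_c_n}, to make the first summand non-negative.

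The remaining summands are manifestly non-negative on $(0,1/2)$: one has $1-2p > 0$, $2-3p > 0$, and $2-6p+6p^2 = 2(1-3p+3p^2) > 0$ (the quadratic in $p$ has negative discriminant $9-12 = -3$). The main (mild) obstacle is pure bookkeeping, namely expanding $5N_{n-1} - 2d_{n-1}$ accurately and recognising the factorisation that turns the linear part into $p(3p-1)(b_{n-1}-c_{n-1})$—once this cancellation is spotted, every term is visibly of the correct sign.
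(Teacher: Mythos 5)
Your proposal is correct and follows essentially the same route as the paper: verify the base case from the explicit value in \eqref{eq_startwerte}, then clear the positive denominator $d_{n-1}$ in \eqref{rec_formulas} and reduce to the non-negativity of $p(3p-1)(b_{n-1}-c_{n-1}) + 3(1-2p)c_{n-1}^2 + 3p(2-3p)b_{n-1}^2 + 3(2-6p+6p^2)b_{n-1}c_{n-1}$, which is exactly the inequality the paper exhibits, settled by Lemma~\ref{lemma_b_n_groesser_c_n} and $p \geq 1/3$. Your expanded coefficients agree with the paper's (note $6(1-3p+3p^2) = 3(2-6p+6p^2)$), so there is nothing to add.
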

	
	\begin{proof} 
		We have $ \alpha_2 = (3 - 4p)/(5 - 7p) \geq 2/5 $ if and only if $ p \leq 5/6 $, which holds since $ p \leq 1/2 $. For $ n \in \N $ with $ n > 2 $, we claim that $ \alpha_{n + 1} \geq 2/5 $. By \eqref{rec_formulas}, this is equivalent to 
		\begin{align*}
		(b_n - c_n) \, p \, (3p - 1) + c_n^2 \, 3 \, (1 - 2p) + b_n^2 3 \, p \, (2 - 3p) + b_n c_n 6 \, ( 1 - 3 p + 3 p^2 ) \geq 0.
		\end{align*}
		This inequality holds since, by Lemma~\ref{lemma_b_n_groesser_c_n}, we have $ b_n \geq c_n $, and since $ (3 p - 1) \geq 0 $ for $ p \geq 1/3 $. 
	\end{proof}
	
	\begin{lemma} \label{lemma_gamma_kleiner_beta}
		For $ n \in \N $ with $ n \geq 2 $, we have that $ \beta_n \geq \gamma_n $.
	\end{lemma}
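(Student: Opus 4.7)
The plan is to prove $\beta_n \geq \gamma_n$ directly from the explicit formulas in \eqref{eq_startwerte} and \eqref{rec_formulas}, handling the base case $n=2$ separately and then computing the difference $\beta_{n+1} - \gamma_{n+1}$ in closed form for $n \geq 2$. Unlike the previous lemmas in this section, no induction on $n$ is actually needed for the step $n \geq 3$, since $\beta_{n+1}$ and $\gamma_{n+1}$ are expressed by \eqref{rec_formulas} in terms of $b_n, c_n$ and $p$ alone.

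First I would check the base case: by \eqref{eq_startwerte},
\[
\beta_2 - \gamma_2 \;=\; \frac{1-p}{5-7p} - \frac{1-2p}{5-7p} \;=\; \frac{p}{5-7p},
\]
which is strictly positive because $p \in (0,1/2)$ gives both $p>0$ and $5-7p > 5 - 7/2 = 3/2 > 0$.

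For $n \geq 2$, I would then use the recursive formulas \eqref{rec_formulas} to write
\[
\beta_{n+1} - \gamma_{n+1} \;=\; \frac{(b_n+c_n)(1-p)p - p\bigl(b_n(1-2p) + c_n p\bigr)}{d_n} \;=\; \frac{p\bigl(b_n\, p + c_n(1-2p)\bigr)}{d_n}.
\]
Since $b_n, c_n \in [0,1]$ as hitting probabilities, $p>0$, and $1-2p > 0$, the numerator is non-negative. For the denominator, $d_n > 0$ follows from the same observations: every summand in the defining expression of $d_n$ is non-negative for $p \in (0,1/2)$, and the term $b_n p(3-4p)$ is strictly positive once $b_n > 0$ (which holds for $n \geq 2$, since from any interior vertex one can reach $2^n$ with positive probability).

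There is no real obstacle here; the only thing to verify carefully is the sign of $d_n$ and the cancellation in the numerator of $\beta_{n+1} - \gamma_{n+1}$, both of which are elementary. The lemma is in fact strictly stronger than stated for $n \geq 2$, since the difference is strictly positive, and this strict positivity will likely be useful later in Section~\ref{sec_hitt_prob} when establishing Theorem~\ref{prop_limits}.
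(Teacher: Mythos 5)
Your proof is correct and follows essentially the same route as the paper: both arguments reduce the claim for $n\geq 2$ to the elementary inequality $b_n(1-2p)+c_np\leq(1-p)(b_n+c_n)$, i.e.\ $b_np+c_n(1-2p)\geq 0$, the only cosmetic difference being that the paper compares $\gamma_{n+1}$ and $\beta_{n+1}$ via their ratio taken from \eqref{dependencies} (where positivity of $\beta_{n+1}$ is automatic), while you compare them via their difference from \eqref{rec_formulas} and therefore also check $d_n>0$, which you do correctly.
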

	
	\begin{proof}
		By \eqref{eq_startwerte}, we have that $ \beta_2 = b_2 \geq c_2 = \gamma_2 $ and, for $ n > 2 $, by \eqref{dependencies},
		\begin{align*}
		\gamma_{n + 1} = \beta_{n + 1} \frac{b_n (1 - 2p) + c_n p}{(1 - p)(b_n + c_n)}.
		\end{align*}
		Since $ 0 \leq b_n p + c_n (1 - 2p) $, which is equivalent to $ b_n (1 - 2p) + c_n p \leq (1 - p)(b_n + c_n) $, it follows that  $\beta_{n + 1} \geq \gamma_{n + 1}$.
	\end{proof}
	
	\begin{proposition} \label{prop_limit_b_n}
		$\displaystyle  \lim_{n \to \infty} b_n = 0 $
	\end{proposition}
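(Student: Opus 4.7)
The plan is to exploit the linear recurrence for $b_n+c_n$ which one obtains by simply adding the last two identities in \eqref{rec_formulas}. Namely, from
\begin{align*}
b_{n+1} = b_n\beta_{n+1} + c_n\gamma_{n+1},\qquad c_{n+1} = b_n\gamma_{n+1} + c_n\beta_{n+1},
\end{align*}
we immediately get the clean product formula
\begin{align*}
b_{n+1}+c_{n+1} = (b_n+c_n)(\beta_{n+1}+\gamma_{n+1}).
\end{align*}
Since $b_n \geq 0$ and $b_n \leq b_n+c_n$, it suffices to show that $\beta_n+\gamma_n$ is bounded above by a constant strictly less than $1$ for all sufficiently large $n$, so that $b_n+c_n$ decays geometrically to $0$.

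To bound $\beta_n+\gamma_n$, I would first observe that the Markov chain, when started at a vertex in $\tilde{\Sigma}^n$, almost surely reaches $V^n$ (since $V^n$ is the only exit from level $n$ and the graph $\Gamma^n$ is finite and connected). Applied to the starting state $12^{n-1}$, this yields $\alpha_n + \beta_n + \gamma_n = 1$ for every $n \geq 2$. Now I would split into two cases according to the value of $p$:
\begin{itemize}
\item If $p \leq 1/3$, Lemma~\ref{beta_kleiner_2_5} gives $\beta_n \leq 2/5$, and combining this with Lemma~\ref{lemma_gamma_kleiner_beta} ($\gamma_n \leq \beta_n$) yields $\beta_n+\gamma_n \leq 4/5$.
\item If $p \geq 1/3$, Lemma~\ref{alpha_gr_2_5} gives $\alpha_n \geq 2/5$, and together with $\alpha_n+\beta_n+\gamma_n=1$ this yields $\beta_n+\gamma_n \leq 3/5$.
\end{itemize}
In either case, $\beta_n+\gamma_n \leq 4/5$ uniformly in $n \geq 2$ for every $p \in (0,1/2)$.

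Iterating the product formula from $n=2$ then gives
\begin{align*}
0 \leq b_n+c_n \leq (b_2+c_2)\left(\tfrac{4}{5}\right)^{n-2},
\end{align*}
and since $b_2+c_2$ is finite by \eqref{eq_startwerte}, we conclude $b_n+c_n \to 0$, hence $b_n \to 0$. The main conceptual step is spotting that summing the two linear recurrences collapses the cross terms into the single factor $\beta_{n+1}+\gamma_{n+1}$; everything else is a direct appeal to the three previous lemmas together with the trivial closure relation $\alpha_n+\beta_n+\gamma_n=1$. No further work is needed, since the two case splits together cover all $p \in (0,1/2)$.
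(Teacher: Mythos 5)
Your proof is correct and follows essentially the same route as the paper: the same case split at $p=1/3$, the same appeals to Lemmata~\ref{lemma_b_n_groesser_c_n}, \ref{beta_kleiner_2_5}, \ref{alpha_gr_2_5} and \ref{lemma_gamma_kleiner_beta}, and the same geometric rates $4/5$ and $3/5$. The only cosmetic difference is that you bound the sum $b_n+c_n$ via the product identity $b_{n+1}+c_{n+1}=(b_n+c_n)(\beta_{n+1}+\gamma_{n+1})$ --- which is precisely the computation the paper records separately as Corollary~\ref{bound_b_n_plus_c_n} --- whereas the paper bounds $b_{n+1}$ directly using $c_n\gamma_{n+1}\leq b_n\beta_{n+1}$.
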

	
	\begin{proof}
		Let $ p \leq 1/3 $. Lemmata \ref{lemma_b_n_groesser_c_n}, \ref{beta_kleiner_2_5} and \ref{lemma_gamma_kleiner_beta} together with \eqref{dependencies} imply, for all $n \in \N$, 
		\begin{align*}
		b_{n + 1} = b_n \beta_{n + 1} + c_n \gamma_{n + 1} 
		\leq 2 \, b_n \beta_{n + 1} \leq b_n (4/5).	
		\end{align*}
		Thus, $ b_n \leq b_2 \left( 4/5 \right)^{n - 2} $, and so, $ \lim_{n \to \infty} b_n = 0 $.
		
		Conversely, for $ p > 1/3 $, by Lemma~\ref{alpha_gr_2_5}, we have $ 0 \leq \gamma_n + \beta_n = 1 - \alpha_n \leq 3/5 $. This in tandem with Lemma~\ref{lemma_b_n_groesser_c_n} and~\eqref{dependencies} yields that $ b_{n + 1} \leq b_n (3/5) $. 
		Thus, $ b_n \leq b_2 \left( 3/5 \right)^{n - 2} $, and so, $ \lim_{n \to \infty} b_n = 0 $.
	\end{proof}
	
	\begin{corollary} \label{cor_lim_a_n_lim_c_n}
		We have that $ \displaystyle \lim_{n \to \infty} c_n = 0 $ and $ \displaystyle \lim_{n \to \infty} a_n = 1 $.
	\end{corollary}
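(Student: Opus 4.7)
The plan is to derive both limits immediately from what has already been established, so no new estimates are required. The essential input is the observation that, for every $ n \geq 2 $, the quantities $ a_n, b_n, c_n $ are a probability distribution on $ V^n $. Indeed, starting at $ 1^{n - 1} 2 \in \tilde{\Sigma}^n $, the Markov chain performs a nearest-neighbour random walk on the finite connected graph $ \Gamma^n $ until it first reaches $ V^n = \{ 1^n, 2^n, 3^n \} $, which happens almost surely and is the unique mechanism by which the chain can ever leave level $ n $. Since the three hitting events are disjoint, summing gives $ a_n + b_n + c_n = 1 $; a sanity check at $ n = 2 $ using the explicit values in \eqref{eq_startwerte} confirms this.

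With that identity in hand the proof proceeds in two short steps. First, combine the pointwise inequality $ 0 \leq c_n \leq b_n $ from Lemma~\ref{lemma_b_n_groesser_c_n} with Proposition~\ref{prop_limit_b_n}, which asserts $ \lim_{n \to \infty} b_n = 0 $. A standard squeeze yields $ \lim_{n \to \infty} c_n = 0 $. Second, use the conservation identity $ a_n = 1 - b_n - c_n $ to conclude $ \lim_{n \to \infty} a_n = 1 - 0 - 0 = 1 $.

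There is no real obstacle here; the corollary is a direct bookkeeping consequence of Proposition~\ref{prop_limit_b_n} together with the monotone relationship $ c_n \leq b_n $ and the transience/absorption structure that forces $ \bsr(1^{n - 1}2) $ to be a probability vector. The only thing one has to be careful about is to invoke the fact that the walk on $ \Gamma^n $ started from an interior vertex reaches $ V^n $ in finite time almost surely, which is immediate from finiteness and connectedness of $ \Gamma^n $ and the strict positivity of the transition probabilities $ p $ and $ q = 1 - 2p $ for $ p \in (0, 1/2) $.
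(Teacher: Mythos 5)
Your proof is correct and is essentially the argument the paper intends (the corollary is stated without proof there, being an immediate consequence of Lemma~\ref{lemma_b_n_groesser_c_n}, Proposition~\ref{prop_limit_b_n}, and the fact that $\bsr(1^{n-1}2)$ is a probability vector, so $a_n+b_n+c_n=1$). Your care in justifying almost-sure absorption in $V^n$ is a nice touch; one could equally verify $a_n+b_n+c_n=1$ by induction from \eqref{eq_startwerte} and the recursions \eqref{dependencies}, but either route is fine.
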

	
	\begin{corollary} \label{bound_b_n_plus_c_n}
		For $ n \geq 2 $, if $ p > 1/3 $, then $ b_n + c_n \leq \left( 3/5\right)^{n - 2} $, and if $ p \leq 1/3 $, then $ b_n + c_n \leq \left( 4/5 \right)^{n - 2} $. 
	\end{corollary}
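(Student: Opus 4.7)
The plan is to induct on $n$ using the factorisation
\[
b_{n+1}+c_{n+1} \;=\; (b_n+c_n)(\beta_{n+1}+\gamma_{n+1}) \;=\; (b_n+c_n)(1-\alpha_{n+1}),
\]
which follows immediately from the fifth and sixth equations in the recursion \eqref{rec_formulas} (or, equivalently, from the last two lines of \eqref{dependencies} since $\alpha_{n+1}+\beta_{n+1}+\gamma_{n+1}=1$). Thus the proof reduces to bounding the contraction factor $\beta_{n+1}+\gamma_{n+1}$ from above by either $3/5$ or $4/5$, depending on the range of $p$.

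For the base case, from \eqref{eq_startwerte} one computes
\[
b_2+c_2 \;=\; \frac{(1-p)+(1-2p)}{5-7p} \;=\; \frac{2-3p}{5-7p},
\]
which is readily seen to be at most $1$ for every $p\in(0,1/2)$, so the inequality $b_2+c_2\le (3/5)^{0}=(4/5)^{0}$ holds trivially.

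For the inductive step, suppose the bound holds for some $n\ge 2$. If $p>1/3$, then Lemma~\ref{alpha_gr_2_5} gives $\alpha_{n+1}\ge 2/5$, so $\beta_{n+1}+\gamma_{n+1}=1-\alpha_{n+1}\le 3/5$, and the displayed factorisation yields $b_{n+1}+c_{n+1}\le (3/5)(b_n+c_n)\le (3/5)^{n-1}$. If $p\le 1/3$, then Lemma~\ref{beta_kleiner_2_5} gives $\beta_{n+1}\le 2/5$, and Lemma~\ref{lemma_gamma_kleiner_beta} gives $\gamma_{n+1}\le\beta_{n+1}\le 2/5$, whence $\beta_{n+1}+\gamma_{n+1}\le 4/5$ and $b_{n+1}+c_{n+1}\le (4/5)(b_n+c_n)\le (4/5)^{n-1}$.

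There is no real obstacle here; the only slight subtlety is that the bound $\beta_{n+1}+\gamma_{n+1}\le 4/5$ for $p\le 1/3$ is not as tight as one might hope (each summand is separately bounded by $2/5$), but this is exactly what the corollary claims, and it suffices for downstream uses. All three ingredients—the factorisation, the base case, and the two upper bounds on $\beta_{n+1}$ and $\alpha_{n+1}$—are already available from \eqref{dependencies}, \eqref{rec_formulas}, \eqref{eq_startwerte}, and Lemmata~\ref{beta_kleiner_2_5}, \ref{alpha_gr_2_5} and \ref{lemma_gamma_kleiner_beta}, so the argument is a short induction.
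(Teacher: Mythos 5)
Your proof is correct and follows essentially the same route as the paper: the paper also uses the factorisation $b_{n+1}+c_{n+1}=(b_n+c_n)(\beta_{n+1}+\gamma_{n+1})$ together with Lemma~\ref{alpha_gr_2_5} for $p>1/3$ and Lemmata~\ref{beta_kleiner_2_5} and~\ref{lemma_gamma_kleiner_beta} for $p\le 1/3$, merely writing the iteration as a telescoping product $\prod_{k=2}^{n+1}(\beta_k+\gamma_k)$ instead of an explicit induction. No gaps.
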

	
	\begin{proof}
		Let $ p > 1/3 $. As in the proof of Proposition~\ref{prop_limit_b_n}, we have, by Lemma~\ref{alpha_gr_2_5} and~\eqref{dependencies}, that
		\begin{align*}
		b_{n + 1} + c_{n + 1} &= (b_n + c_n) (\beta_{n + 1} + \gamma_{n + 1}) 
		= \prod_{k = 2}^{n + 1} (\beta_{k} + \gamma_{k})
		= \prod_{k = 2}^{n + 1} (1 - \alpha_k)
		\leq \left( \frac{3}{5} \right)^{n - 1}. 
		\end{align*}
		Similarly, for $ p \leq 1/3 $, by the Lemmata \ref{beta_kleiner_2_5} and \ref{lemma_gamma_kleiner_beta} and \eqref{dependencies}, we have that 
		\[
		b_{n + 1} + c_{n + 1} = \prod_{k = 2}^{n + 1} (\beta_{k} + \gamma_{k}) 
		\leq 2 \, \prod_{k = 2}^{n + 1} \beta_{k}
		\leq \left( \frac{4}{5} \right)^{n - 1}.
		\qedhere
		\]
	\end{proof}
	
	We turn our attention to finding the limits of the sequences $ (\alpha_n)_{n \in \N}, (\beta_n)_{n \in \N} $ and $ (\gamma_n)_{n \in \N}  $. First we observe that $ \alpha_n $ and $ \beta_n $ have to converge to the same value, provided the limits exist. 
	
	\begin{lemma} \label{gleicher_GW_beta_alpha}
		If the limit $ \displaystyle \lim_{n \to \infty} \alpha_n $ exists, we have that $ \displaystyle \lim_{n \to \infty} \alpha_n = \lim_{n \to \infty} \beta_n $.
	\end{lemma}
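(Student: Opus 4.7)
My plan is to show directly that $\alpha_{n+1} - \beta_{n+1} \to 0$ as $n \to \infty$, from which the lemma follows immediately: if $\lim_{n \to \infty} \alpha_{n}$ exists then the sequence $\beta_{n}$ must converge to the same limit.

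Using the recursive formulas in \eqref{rec_formulas}, the subtraction is clean because the terms $(b_n + c_n)(1-p)p$ in the numerators of $\alpha_{n+1}$ and $\beta_{n+1}$ cancel. Explicitly, I would write
\begin{equation*}
\alpha_{n+1} - \beta_{n+1} = \frac{c_n^2(1-2p) + b_n^2\, p(2-3p) + b_n c_n\,(2 - 6p(1-p))}{d_n}.
\end{equation*}
The first step is then to observe that, for $p \in (0,1/2)$, every coefficient in the numerator is non-negative: $1 - 2p \geq 0$, $p(2-3p) > 0$ since $p < 2/3$, and $6p(1-p) \leq 6 \cdot 1/4 = 3/2 < 2$. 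Hence the numerator is bounded above by a constant $C_1(p)$ times $(b_n + c_n)^2$.

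The second step is to bound $d_n$ from below. Dropping the non-negative quadratic contributions in $d_n$, I would use only the linear terms to obtain
\begin{equation*}
d_n \;\geq\; b_n\, p (3-4p) + c_n\, p (2-p) \;\geq\; p \cdot \min\{3-4p,\, 2-p\}\cdot (b_n + c_n).
\end{equation*}
For $p \in (0,1/2)$ both $3 - 4p$ and $2 - p$ are strictly positive, so the denominator is at least $c(p) (b_n + c_n)$ for some positive constant $c(p)$ depending only on $p$. (Positivity of $b_n + c_n$ for $n \geq 2$ is clear from the positive initial values in \eqref{eq_startwerte} together with \eqref{rec_formulas}.)

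Combining the two bounds yields $|\alpha_{n+1} - \beta_{n+1}| \leq C(p)\,(b_n + c_n)$, and Corollary~\ref{bound_b_n_plus_c_n} (or equivalently Proposition~\ref{prop_limit_b_n} together with Corollary~\ref{cor_lim_a_n_lim_c_n}) gives $b_n + c_n \to 0$ geometrically. The assumed existence of $\lim_{n\to\infty} \alpha_n$ then forces $\beta_n$ to converge to the same limit. I do not anticipate a genuine obstacle here; the only mildly delicate point is to verify the signs of the coefficients $1-2p$, $2-3p$, $2-6p(1-p)$, $3-4p$ and $2-p$ throughout the parameter range $p \in (0,1/2)$, which is straightforward.
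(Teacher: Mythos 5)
Your argument is correct, and it takes a different route from the paper's. The paper proves the lemma in two lines by passing to the limit in the first (unrearranged) relation of \eqref{dependencies}, namely $\alpha_{n+1}(1 - a_n(1-2p) - b_n(1-3p) - p) = p\,\beta_{n+1} + p\,b_n + c_n(1-2p)$: by Proposition~\ref{prop_limit_b_n} and Corollary~\ref{cor_lim_a_n_lim_c_n} the bracket tends to $p$ and the remainder terms vanish, so $p\lim\alpha_n = p\lim\beta_n$. You instead subtract the two closed-form recursions in \eqref{rec_formulas}, exploit the cancellation of the common term $(b_n+c_n)(1-p)p$, and bound the resulting ratio by checking that the numerator is $O\bigl((b_n+c_n)^2\bigr)$ while $d_n \geq p\min\{3-4p,\,2-p\}(b_n+c_n)$; all your sign checks on $1-2p$, $p(2-3p)$, $2-6p(1-p)$, $3-4p$ and $2-p$ are valid on $(0,1/2)$, and positivity of $b_n, c_n$ follows from \eqref{eq_startwerte} and \eqref{rec_formulas} (or simply because they are hitting probabilities of reachable states). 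What your approach buys is a quantitative, unconditional statement: combined with Corollary~\ref{bound_b_n_plus_c_n} it gives $|\alpha_{n+1}-\beta_{n+1}| \leq C(p)(4/5)^{n-2}$, i.e.\ $\alpha_n - \beta_n \to 0$ geometrically without assuming $\lim\alpha_n$ exists, and it does not need the input $a_n \to 1$. The cost is a slightly longer computation; the paper's version is shorter but relies on the limit relation only at the level of existence.
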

	
	\begin{proof}
		By \eqref{dependencies}, Proposition~\ref{prop_limit_b_n} and Corollary~\ref{cor_lim_a_n_lim_c_n}, we have that
		\begin{align*}
		\lim_{n \to \infty} \alpha_n p 
		&= \lim_{n \to \infty} (\alpha_{n + 1} (1 - a_n(1 - 2p) - b_n (1 - 3p) - p)) \\
		&= \lim_{n \to \infty} (\beta_{n + 1} p + p b_n +c_n (1 - 2p)) =  \lim_{n \to \infty} \beta_n p.
		\qedhere
		\end{align*}
	\end{proof}
	
	We require two further technical lemmata before we can present the proof of Proposition~\ref{limit_alpha_beta_gamma}. 
	
	\begin{lemma} \label{lemma_formula_fraction}
		For $ n \geq 2 $ we have that 
		\begin{align*}
		\frac{b_{n + 1} - c_{n + 1}}{c_{n + 1}} 
		= \frac{b_n - c_n}{c_n} \frac{c_n (1 - 2p) + b_n p}{b_n + c_n (1 - p) + \frac{b_n^2}{c_n} (1 - 2p)}.
		\end{align*}
	\end{lemma}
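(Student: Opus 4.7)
The plan is a direct algebraic verification starting from the recursive formulas \eqref{rec_formulas} for $b_{n+1}$ and $c_{n+1}$. Both of these have the common prefactor $p/d_n$, which will cancel when we take the ratio $(b_{n+1}-c_{n+1})/c_{n+1}$, so the denominator $d_n$ never needs to be touched.

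First I would form the difference
\[
b_{n+1} - c_{n+1} = \frac{p}{d_n}\bigl[\,b_n c_n(2-3p) + b_n^2(1-p) + c_n^2 p - b_n c_n - c_n^2(1-p) - b_n^2(1-2p)\,\bigr],
\]
collect the terms by monomial in $b_n,c_n$, and simplify the bracket to
\[
b_n^2 p + b_n c_n(1-3p) - c_n^2(1-2p).
\]
The key observation is that this quadratic in $b_n, c_n$ factors as
\[
b_n^2 p + b_n c_n(1-3p) - c_n^2(1-2p) = (b_n - c_n)\bigl(b_n p + c_n(1-2p)\bigr),
\]
which one checks by expanding the right-hand side: the middle coefficient is $(1-2p) - p = 1-3p$, as required. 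This is the only non-routine step, and it is the heart of the identity; I expect this factorisation to be the main (and in fact only real) obstacle, though it is essentially trivial once the monomials have been collected correctly.

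With the factorisation in hand, I would write
\[
\frac{b_{n+1} - c_{n+1}}{c_{n+1}} = \frac{(b_n - c_n)\bigl(b_n p + c_n(1-2p)\bigr)}{b_n c_n + c_n^2(1-p) + b_n^2(1-2p)},
\]
since the $p/d_n$ prefactor cancels. Finally, multiplying numerator and denominator on the right by $1/c_n$ gives
\[
\frac{b_{n+1}-c_{n+1}}{c_{n+1}} = \frac{b_n - c_n}{c_n}\cdot\frac{b_n p + c_n(1-2p)}{b_n + c_n(1-p) + \tfrac{b_n^2}{c_n}(1-2p)},
\]
which is the claimed formula. Note that $c_n > 0$ for all $n \geq 2$ (by \eqref{eq_startwerte} and the recursion, since $p\in(0,1/2)$ keeps the numerator of $c_{n+1}$ strictly positive whenever $b_n,c_n>0$), so dividing by $c_n$ is legitimate. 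This completes the proof sketch; no further lemmata are needed.
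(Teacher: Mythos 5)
Your proposal is correct and follows the paper's own proof essentially verbatim: cancel the common prefactor $p/d_n$, collect the numerator into $b_n^2 p + b_n c_n(1-3p) - c_n^2(1-2p)$, factor it as $(b_n-c_n)\bigl(b_n p + c_n(1-2p)\bigr)$, and divide through by $c_n$. The added justification that $c_n>0$ is a small bonus the paper leaves implicit.
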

	
	\begin{proof}
		Using the recursive formulas given in \eqref{rec_formulas} for $ c_{n + 1} $ and $  b_{n + 1} $, we have
		\begin{align*}
		\frac{b_{n + 1} - c_{n + 1}}{c_{n + 1}}
		&= \frac{b_n c_n (2 - 3 p) + b_n^2 (1 - p) + c_n^2 p - b_n c_n - c_n^2 (1 - p) - b_n^2 (1 - 2 p)}{b_n c_n + c_n^2 (1 - p) + b_n^2 (1 - 2 p)} \\
		&= \frac{b_n c_n (1 - 3p) + b_n^2 p + c_n (2p - 1)}{b_n c_n + c_n^2 (1 - p) + b_n^2 (1 - 2 p)}
		= \frac{(b_n - c_n) (c_n (1 - 2p) + b_n p)}{b_n c_n + c_n^2 (1 - p) + b_n^2 (1 - 2 p)}.
		\qedhere
		\end{align*}
	\end{proof}
	
	\begin{lemma} \label{lemma_bound_1_p}
		For $ n \geq 2 $ it holds that 
		\begin{align*}
		\frac{c_n (1 - 2p) + b_n p}{b_n + c_n (1 - p) + \frac{b_n^2}{c_n} (1 - 2p)} \leq 1 - p.
		\end{align*}
	\end{lemma}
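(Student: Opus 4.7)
The plan is to clear the fraction and recast the inequality as a sum of manifestly non-negative terms, so that no non-trivial structural input (such as the bound $b_n \geq c_n$ from Lemma~\ref{lemma_b_n_groesser_c_n}) is needed. First I would record that $c_n > 0$ for every $n \geq 2$: this holds for $n=2$ by \eqref{eq_startwerte}, and the recursive formula for $c_{n+1}$ in \eqref{rec_formulas} has a strictly positive numerator $p\bigl(b_n c_n + c_n^2(1-p) + b_n^2(1-2p)\bigr)$ and a strictly positive denominator $d_n$ whenever $b_n, c_n > 0$ and $p \in (0,1/2)$, so the claim propagates by induction. In particular the denominator $b_n + c_n(1-p) + (b_n^2/c_n)(1-2p)$ of the target fraction is strictly positive, making cross-multiplication permissible.

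Next I would compute the difference
\[
D \coloneqq (1-p)\left(b_n + c_n(1-p) + \frac{b_n^2}{c_n}(1-2p)\right) - \bigl(c_n(1-2p) + b_n p\bigr)
\]
term by term. Using the elementary identities $(1-p)-p = 1-2p$ and $(1-p)^2-(1-2p) = p^2$, the cross-difference collapses to
\[
D \;=\; b_n(1-2p) \;+\; c_n p^2 \;+\; (1-p)(1-2p)\frac{b_n^2}{c_n}.
\]
Since $p \in (0,1/2)$, each of $1-2p$, $p^2$ and $1-p$ is non-negative, and $b_n, c_n > 0$, so every summand is non-negative and hence $D \geq 0$. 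This is precisely the required inequality.

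The main (and only) obstacle is verifying the simplification that collapses $D$ to the three manifestly non-negative summands above; this is a short and direct calculation. In contrast with the proofs of Lemmata~\ref{beta_kleiner_2_5} and~\ref{alpha_gr_2_5}, no case distinction $p \lessgtr 1/3$ and no appeal to $b_n \geq c_n$ is required: the bound is a purely algebraic consequence of the sign constraints on $p$.
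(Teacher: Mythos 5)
Your proof is correct and follows essentially the same route as the paper: the paper likewise cross-multiplies and reduces the claim to the non-negativity of $b_n c_n (1 - 2p) + c_n^2 p^2 + b_n^2 (1 - 2p)(1 - p)$, which is exactly your expression for $D$ multiplied through by $c_n$. Your additional remarks on the positivity of $c_n$ and the absence of any need for $b_n \geq c_n$ are accurate but do not change the argument.
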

	
	\begin{proof}
		The result follows as $c_n (1 - 2p) + b_n p \leq (1 - p) (b_n + c_n (1 - p) + b_n^2 (1 - 2p)/c_n )$ which is equivalent to $ 0 \leq b_n c_n (1 - 2p) + c_n^2 p^2 + b_n^2 (1 - 2p) (1 - p) $.
	\end{proof}
	
	\begin{corollary} \label{cor_lim_b_n_over_c_n}
		$ \displaystyle\lim_{n \to \infty} b_n / c_n = 1 $
	\end{corollary}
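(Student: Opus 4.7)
The plan is to combine the two preceding lemmata to show that the sequence $(b_n - c_n)/c_n$ contracts geometrically to zero, which is precisely the assertion that $b_n/c_n \to 1$.

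First, I would observe that by Lemma~\ref{lemma_b_n_groesser_c_n} we have $b_n \geq c_n$, so $(b_n - c_n)/c_n \geq 0$ for all $n \geq 2$, and in particular the ratio is well defined and non-negative. Combining the explicit identity in Lemma~\ref{lemma_formula_fraction} with the bound in Lemma~\ref{lemma_bound_1_p} yields the one-step contraction
\begin{align*}
\frac{b_{n + 1} - c_{n + 1}}{c_{n + 1}} \leq (1 - p) \, \frac{b_n - c_n}{c_n}.
\end{align*}

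Iterating this inequality from $n = 2$ gives
\begin{align*}
0 \leq \frac{b_n - c_n}{c_n} \leq (1 - p)^{n - 2} \, \frac{b_2 - c_2}{c_2},
\end{align*}
and the initial ratio $(b_2 - c_2)/c_2 = p/(1 - 2p)$ is finite by \eqref{eq_startwerte} since $p \in (0, 1/2)$. As $p > 0$ we have $1 - p \in (1/2, 1)$, so the right-hand side tends to $0$ as $n \to \infty$. Consequently $(b_n - c_n)/c_n \to 0$, which rearranges to $b_n/c_n \to 1$ as claimed.

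There is no real obstacle here: the two lemmata have been arranged so that Lemma~\ref{lemma_formula_fraction} extracts the contraction factor and Lemma~\ref{lemma_bound_1_p} bounds it by $1 - p < 1$, turning the argument into a one-line geometric decay estimate. The only minor point worth verifying is that $c_n > 0$ for all $n \geq 2$, so that the ratios are well defined; this follows inductively from the recursion for $c_{n+1}$ in \eqref{rec_formulas} together with the positive initial value $c_2 = (1 - 2p)/(5 - 7p) > 0$.
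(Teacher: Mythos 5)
Your proof is correct and follows essentially the same route as the paper: combine Lemma~\ref{lemma_formula_fraction} with Lemma~\ref{lemma_bound_1_p} to obtain the one-step contraction by the factor $1-p$, then iterate to get geometric decay of $(b_n-c_n)/c_n$. The extra remarks on positivity of $c_n$ and the explicit value $(b_2-c_2)/c_2 = p/(1-2p)$ are correct and harmless additions.
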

	
	\begin{proof}
		The result is a consequence of the following observation.  For $ n \geq 2 $, by Lemmata \ref{lemma_formula_fraction} and \ref{lemma_bound_1_p},
		\[
		\left\lvert \frac{b_{n + 1}}{c_{n+ 1}} - 1 \right\rvert = \frac{b_{n + 1} - c_{n + 1}}{c_{n + 1}}
		\leq \frac{b_n -  c_n}{c_n} (1 - p) \leq \frac{b_2 -  c_2}{c_2} (1 - p)^{n - 1}. 
		\qedhere
		\]
	\end{proof}
	
	\begin{proposition} \label{limit_alpha_beta_gamma}
		We have that $\displaystyle \lim_{n \to \infty} \alpha_{n} = \lim_{n \to \infty} \beta_{n} = 2/5$ and $\displaystyle \lim_{n \to \infty} \gamma_{n} = 1/5$.
	\end{proposition}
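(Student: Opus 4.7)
The plan is to pass to the limit directly in the recursive formulas \eqref{rec_formulas} for $\beta_{n+1}$ and $\gamma_{n+1}$, using the previously established facts $b_n, c_n \to 0$ (Proposition~\ref{prop_limit_b_n} and Corollary~\ref{cor_lim_a_n_lim_c_n}) together with $b_n/c_n \to 1$ (Corollary~\ref{cor_lim_b_n_over_c_n}). The idea is that both the numerators of $\beta_{n+1}$ and $\gamma_{n+1}$, and the denominator $d_n$, are dominated by their terms linear in $b_n$ and $c_n$, so dividing numerator and denominator by $c_n$ turns the recursion into a ratio whose limit is easy to compute. Once $\beta_n$ and $\gamma_n$ are handled, $\alpha_n \to 2/5$ will follow immediately from the identity $\alpha_n + \beta_n + \gamma_n = 1$.

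Concretely, I would first rewrite
\begin{align*}
\beta_{n+1} &= \frac{(b_n/c_n + 1)(1-p)p}{d_n/c_n}, \qquad
\gamma_{n+1} = \frac{p\bigl((b_n/c_n)(1-2p) + p\bigr)}{d_n/c_n},\\
\frac{d_n}{c_n} &= (2-p)p + c_n(1-2p) + \frac{b_n^2}{c_n}\,p(2-3p) + \frac{b_n}{c_n}\,p(3-4p) + b_n(2-6p+6p^2).
\end{align*}
The crucial observation is that $b_n^2/c_n = (b_n/c_n)\cdot b_n$, which tends to $1\cdot 0 = 0$ by Corollary~\ref{cor_lim_b_n_over_c_n} and Proposition~\ref{prop_limit_b_n}. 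Hence as $n\to\infty$,
\[
\frac{d_n}{c_n} \longrightarrow (2-p)p + p(3-4p) = p(5-5p) = 5p(1-p).
\]
The numerator of $\beta_{n+1}$ divided by $c_n$ tends to $2(1-p)p$, giving $\beta_n \to 2(1-p)p/(5p(1-p)) = 2/5$. Likewise, the numerator of $\gamma_{n+1}$ divided by $c_n$ tends to $p\bigl((1-2p)+p\bigr) = p(1-p)$, giving $\gamma_n \to p(1-p)/(5p(1-p)) = 1/5$.

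Finally, the chain starting at $12^{n-1} \in \tilde{\Sigma}^n$ is absorbed almost surely by one of the three vertices in $V^n$ (as it transitions to the next level upon reaching $V^n$ and cannot return), so $\alpha_n + \beta_n + \gamma_n = 1$ for every $n \geq 2$. Therefore $\alpha_n = 1 - \beta_n - \gamma_n \to 1 - 2/5 - 1/5 = 2/5$, which also recovers Lemma~\ref{gleicher_GW_beta_alpha} in the limit. The only non-routine step is identifying that $b_n^2/c_n \to 0$, which is exactly what Corollary~\ref{cor_lim_b_n_over_c_n} is designed to supply; the rest is bookkeeping of leading-order terms. I expect no serious obstacle beyond this, since all the delicate analysis has been done in the earlier lemmata.
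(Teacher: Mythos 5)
Your proof is correct, and it rests on the same two pillars as the paper's (Corollary~\ref{cor_lim_a_n_lim_c_n} and Corollary~\ref{cor_lim_b_n_over_c_n}), but the route through the algebra is genuinely different and somewhat cleaner. The paper computes only the limit of $\beta_{n+1}$ from the recursion, and does so by inserting a factor $a_{n+1}'/a_{n+1}'$ (with $a_{n+1}'$ the numerator of $a_{n+1}$) so as to rewrite $\beta_{n+1}$ as $a_{n+1}$ times a ratio whose numerator and denominator are then divided by $c_n$; this uses $a_n \to 1$ as an extra input. It then obtains $\lim \alpha_n = \lim \beta_n$ from the separate Lemma~\ref{gleicher_GW_beta_alpha} (which itself goes back to \eqref{dependencies}) and finally gets $\gamma_n \to 1/5$ by complementation. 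You instead divide the numerators of both $\beta_{n+1}$ and $\gamma_{n+1}$ and the denominator $d_n$ directly by $c_n$, observe that $b_n^2/c_n = (b_n/c_n)\,b_n \to 0$, read off $d_n/c_n \to 5p(1-p)$, and conclude $\beta_n \to 2/5$ and $\gamma_n \to 1/5$ in one stroke; $\alpha_n \to 2/5$ then follows by complementation, since the walk started at $12^{n-1}$ is absorbed by $V^n$ almost surely so that $\alpha_n + \beta_n + \gamma_n = 1$ (a fact the paper also uses implicitly in its last line). Your version buys independence from Lemma~\ref{gleicher_GW_beta_alpha} and from the limit $a_n \to 1$, at the cost of one extra (routine) limit computation for $\gamma_{n+1}$; all the numerical evaluations you give, $(2-p)p + p(3-4p) = 5p(1-p)$, $2(1-p)p/(5p(1-p)) = 2/5$ and $p(1-p)/(5p(1-p)) = 1/5$, check out.
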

	
	\begin{proof}
		Define $ a_{n + 1}' \coloneqq c_n (2 - p) p + c_n^2 (1 - 3 p) + b_n p (3 - 4 p) + b_n c_n (2 - 9 p + 9 p^2) $, which is, due to \eqref{rec_formulas}, the numerator of $ a_{n + 1} $. This in tandem with \eqref{rec_formulas} implies that 
		\begin{align*}
		\beta_{n + 1} 
		&= \frac{(b_n + c_n)(1 - p)p}{d_n} \: \frac{a_{n + 1}'}{a_{n + 1}'}\\
		&= \frac{(b_n + c_n)(1 - p)p}{c_n (2 - p) p + c_n^2 (1 - 3 p) + b_n p (3 - 4 p) + b_n c_n (2 - 9 p + 9 p^2)} \frac{a_{n + 1}'}{d_n} \\
		&= \frac{(b_n + c_n)(1 - p)p}{2 c_n (1 - p) p}  \frac{2 c_n (1 - p)\,p\,a_{n + 1}}{c_n (2 - p) p + c_n^2 (1 - 3 p) + b_n p (3 - 4 p) + b_n c_n (2 - 9 p + 9 p^2)} \\
		&= \left ( \frac{1}{2} \frac{b_n}{c_n} + \frac{1}{2} \right )
		\frac{2 (1 - p)\,p\,a_{n + 1}}{(2 - p) p + c_n (1 - 3 p) + (b_n/c_n) p (3 - 4 p) + b_n (2 - 9 p + 9 p^2)}.
		\end{align*}
		It follows by the Corollaries~\ref{cor_lim_a_n_lim_c_n} and~\ref{cor_lim_b_n_over_c_n} that 
		\[ \beta \coloneqq \lim_{n \to \infty} \beta_{n + 1} = \lim_{n \to \infty} \frac{2 (1 - p)}{(2 - p) + (3 - 4p)} 
		= \frac{2}{5}.  \]
		Lemma~\ref{gleicher_GW_beta_alpha} yields $\displaystyle \lim_{n \to \infty} \alpha_n = 2/5 $, and so, $\displaystyle 1/5 = \lim_{n \to \infty} 1 - \alpha_n - \beta_n = \lim_{n \to \infty} \gamma_n$.
	\end{proof}

\section*{Acknowledgements}
	
The authors acknowledge the support of the Deutsche Forschungsgemeinschaft (DFG grant Ke 1440/3-1). Part of this work was completed while the authors were visiting the Mittag-Leffler institute as part of the research program \textsl{Fractal Geometry and Dynamics}. We are extremely grateful to the organisers and staff for their very kind hospitality, financial support and a stimulating atmosphere. Additionally, K.\,Sender acknowledges support from the DAAD PROMOS program. 

\bibliographystyle{plain}
\bibliography{bib}
	
\end{document}